\numberwithin{equation}{section} 
\newcommand{\R}{\ensuremath{\mathbb{R}}}
\newcommand{\N}{\ensuremath{\mathbb{N}}}
\newcommand{\eps}{\varepsilon}
\newcommand{\B}{\mathbb{B}}
\newcommand{\cA}{\mathscr{A}}
\newcommand{\cC}{\mathscr{C}}
\newcommand{\bP}{\mathbb{P}}
\newcommand{\mP}{\mathbb{P}}
\newcommand{\X}{\mathbb{X}}
\newcommand{\E}{\mathbb{E}}
\newcommand{\ta}{\mathfrak{a}}
\newcommand{\tA}{\mathfrak{A}}
\newcommand{\cD}{\mathscr{D}}
\newcommand{\cS}{\mathcal{S}}
\newcommand{\cL}{\mathcal{L}}  
\newcommand{\bx}{\mathbf x}
\newcommand{\bX}{\mathbf X}
\newcommand{\bz}{\ta^\Delta}  
\newcommand{\bw}{\mathbf w}
\newcommand{\ty}{\bar y}
\newcommand{\tz}{\bar z}
\newcommand{\tphi}{\bar{\phi}}
\newcommand{\tpsi}{\bar{\psi}}
\newcommand{\teta}{\bar{\eta}}
\newcommand{\tp}{p{\rm -var}}
\newcommand{\tq}{q{\rm -var}}
\newcommand{\ltn}{\ensuremath{\left| \! \left| \! \left|}}
\newcommand{\rtn}{\ensuremath{\right| \! \right| \! \right|}}
\newtheorem{theorem}{Theorem}[section]
{ \theorembodyfont{\normalfont} 
	\newtheorem{example}[theorem]{Example}
	\newtheorem{remark}[theorem]{Remark}
}
\newtheorem{definition}[theorem]{Definition}
\newtheorem{lemma}[theorem]{Lemma}
\newtheorem{corollary}[theorem]{Corollary}
\newtheorem{proposition}[theorem]{Proposition}
\newcounter{enumctr}
\begin{document}
	
	\title{Stability criteria for rough systems}
	\author{
		Luu Hoang Duc\thanks{Department of Mathematics, University of Klagenfurt, Austria \& Max-Planck-Institute for Mathematics in the Sciences, Leipzig, Germany \& Institute of Mathematics, Vietnam Academy of Science and Technology, Vietnam. {\it E-mail:  duc.luu@aau.at,
        duc.luu@mis.mpg.de, lhduc@math.ac.vn}}
        , $\;$ Phan Thanh Hong \thanks{Thang Long University, Hanoi, Vietnam {\it E-mail: hongpt@thanglong.edu.vn }}, $\;$ Nguyen Dinh Cong\thanks{Institute of Mathematics, Vietnam Academy of Science and Technology, Vietnam {\it E-mail: ndcong@math.ac.vn}}}
	\date{{\it submitted draft}}
	\maketitle

	\begin{abstract}
		We propose a quantitative direct method to prove the local stability of a stationary solution for a rough differential equation and its regular discretization scheme. Using Doss-Sussmann technique and stopping time analysis, we provide stability criteria for a stationary solution of the continuous system to be exponentially stable, provided the diffusion term is bounded and its derivatives exhibit small growth. The same conclusions hold for the regular discretization scheme with a sufficiently small step size, but one needs to apply the sewing lemma and stopping times for the discrete time set. Our stability criteria are based on the linearization of the drift and require only information about the bound and growth rates of the diffusion, making them data-driven criteria.
	\end{abstract}

	{\bf Keywords:}
	stochastic differential equations (SDE), rough path theory, rough differential equations, exponential stability.
	
	
	\section{Introduction}
	
	This paper deals with the local asymptotic stability criteria for rough differential equations on $\R^d$ of the form 
	\begin{equation}\label{RDE1}
		dy = f(y)dt + g(y)dx.
	\end{equation}
Equation \eqref{RDE1} can be viewed as a controlled differential equation driven by rough path $x \in C^{\nu}([0,T],\R^m)$ in the sense of Lyons \cite{lyons98}, \cite{lyonsetal07} or of Friz-Victoir \cite{friz}. It can also be interpreted as a rough intergral equation for controlled rough paths in the sense of Gubinelli \cite{gubinelli}. As such, system \eqref{RDE1} appears as a pathwise approach to solve a stochastic differential equation which is driven by a certain H\"older noise $X_t$ (e.g. fractional Brownian motions).
	
	Under certain assumptions imposed on its coefficient functions, a rough differential equation will have the property of
	the local existence, uniqueness and continuity of  solution given initial conditions, see e.g. \cite{gubinelli} or \cite{frizhairer} for a version without drift coefficient function, and \cite{riedelScheutzow}, \cite{duckloeden} for a full version using variation or H\"older norms. Moreover, with a stronger assumption on the coefficient functions a rough differential equation will have global solutions which exist throughout the time interval $[0,\infty)$ (see e.g. \cite{bailleul} for non-explosion criteria of the solutions). This gives rise to  the recent interest in investigation of qualitative properties of rough differential equation with a view to the parallel in the well developed and well known qualitative theory of ordinary differential equations.

    The topic of random attractors and its upper semi-continuity has been studied for the random dynamical system generated from \eqref{RDE1} and its discretization, which applies Lyapunov methods in many works, see e.g. \cite{GAKLBSch2010}, \cite{duc21}, \cite{duckloeden}, \cite{congduchong23} and the references therein.  In particular, it is proved in \cite{duc21} that for strictly dissipative $f: \R^d \to \R^d$ and a sufficiently small $g \in C^3_b$ there exists a singleton random attractor to which every trajectory converges in the pathwise sense and with an exponential rate. Moreover for globally Lipschitz continuous $f$, there exists a numerical attractor of the Euler discretization scheme of \eqref{RDE1} which converges to the continuous attractor as the step size converges to zero. However, the existence of a global attractor often does not guarantee the local stability of any reference solution even if it stays inside the attractor. 
		
	This gives a motivation for studying the asymptotic stability for path-wise solution of \eqref{RDE1}. The topic of stability for stochastic differential equation with H\"older noises dated back from our previous work \cite{ducGANSch18, duchongcong18} using both the semigroup method and the Lyapunov function method, with an improved version in \cite{ducSD22} for the stability of the trivial solution. More precisely, the semigroup method in \cite{GAKLBSch2010} is developed further in \cite{ducGANSch18, duchongcong18,ducSD22}, to deal with fractional Brownian noise with small intensity. The semigroup technique is also used to study local stability of the trivial solution in \cite[Theorem 18]{GABSch18} on a small neighborhood $B(0,r)$, using the cutoff technique and fractional calculus, and under the assumption that $g(x)$ is flat, i.e. $g(0) =0,\  Dg(0) = 0,\ D^2g(0)= 0$. Later, this condition is improved in \cite{ducSD22}, where it applies the semigroup technique to derive a global stability criterion that requires the linearized part $Df(0) \in \R^{d \times d}$ to have eigenvalues of negative real parts and the non-linear part $F(y) =f(y)-Df(0)y$ to be globally Lipschitz w.r.t. Lipschitz constant $C_F$  such that $C_F$ and $\|Dg\|_\infty \vee \|D^2g\|_\infty \vee \|D^3g\|_\infty $ are sufficiently small. 
 Recently, the semigroup technique is also used in \cite{riedel25} to deal with the Lyapunov exponents and the related topics regarding to the local dynamics in the vicinity of a stationary solution, such as exponential stability and invariant manifolds. 

However, we have a different vision in developing results on exponential stability. Since our previous works \cite{ducGANSch18}, \cite{ducSD22}, our ideas have stemmed from the observation that the information on the linearization of the whole system is not available in practice, as we often have only the drift $f$ and a trajectory of a reference stationary solution and not much information on the diffusion $g$. For this reason, it is often too costly, if not impossible, to obtain the information on the linearization along the stationary solution of the entire system \eqref{RDE1} and to investigate its Lyapunov spectrum, as suggested recently in \cite{riedel25}. Therefore in our studies we often consider and impose conditions on only the linearization of the drift $Df$ rather than of the entire system to meet the reality. On the other hand, we would like to develop the Lyapunov function method in studying stability for rough systems, as done in our previous result in \cite{duchongcong18} for nonautonomous Young equations.

In this paper we revisit the Lyapunov asymptotic stability of the rough differential equation \eqref{RDE1} near a stationary solution which can be, in many cases, time dependent, e.g. a singleton random attractor. In that scenarios, one can test stability of a time dependent reference solution $\ta_t$ of \eqref{RDE1} by checking the dynamics of the unperturbed deterministic system $\dot{y} = f(y)$ along a neighborhood of $\ta_t$, by considering e.g. the linearization $\dot{\xi} = Df(\ta_t)\xi$ whose data is often available. Roughly speaking, our main results, Theorem \ref{local_diss_attractor} and Theorem \ref{stable1}, for rough systems can be combined and reformulated as below.
	\begin{theorem}[Main results] Assume conditions for the drift $f \in C^1$, the diffusion $g$ and the driving noise $X$ so that there exists pathwise solution for the equation \eqref{RDE1} and the generated Wiener shift is ergodic. Let $\ta(\cdot)$ be a stationary solution of \eqref{RDE1} that is $L^\rho$-integrable. If the linearization of the unperturbed system $\dot{\xi} = f(\xi)$ along $\ta(\cdot)$ (i.e. the system $\dot{\xi} = Df(\ta_t)\xi$) is exponentially stable, then $\ta_t$ is exponentially stable almost surely for the perturbed system \eqref{RDE1}, as long as $C_g$ defined in \eqref{gcond.new} is sufficiently small and satisfies a stability criterion. If $f$ is globally Lipschitz continuous, the same conclusion holds for the regular discretization scheme with sufficiently small $L_g$ in \eqref{Lgdis} and step size $\Delta$.
    \end{theorem}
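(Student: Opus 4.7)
The plan is to use the Doss--Sussmann transformation to peel off the rough driver $x$ and recast \eqref{RDE1} as a pathwise random ordinary differential equation for an auxiliary variable $u_t$, along which the behaviour near the stationary solution can be analyzed by classical Lyapunov/variation-of-constants arguments. The assumed exponential stability of the linearization $\dot\xi = Df(\ta_t)\xi$ then supplies the reference decay rate, and a stopping-time analysis localizes the estimate so that the nonlinear remainder and the rough-path perturbation of the transformed drift can be absorbed into this decay whenever $C_g$ is small.

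Concretely, I would first write $y_t = h(u_t, t, \omega)$ via the flow of the vertical vector field $g$ against $x$, so that $u_t$ satisfies $\dot u_t = \tilde f(t, u_t, \omega)$ with $\omega$-dependence controlled by $C_g$ from \eqref{gcond.new}. Setting $z_t := u_t - \bar u_t$, where $\bar u_t$ is the transform of $\ta_t$, and Taylor-expanding $\tilde f$ at $\bar u_t$ yields
\[
\dot z_t = A_t(\omega) z_t + R(t, z_t, \omega),
\]
where $A_t(\omega)$ is a $C_g$-perturbation of $Df(\ta_t)$ and $R$ collects a quadratic remainder plus a term of order $C_g |z_t|$. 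I would then use the exponential stability of $Df(\ta_t)$, together with a roughness/continuity-of-cocycle argument, to show that the homogeneous system $\dot w = A_t(\omega) w$ still decays exponentially with some rate $\lambda' < \lambda$ on a full-measure set, and apply the variation-of-constants formula coupled with a Gronwall loop on the stochastic interval $[0,\tau_r(\omega)]$, where $\tau_r := \inf\{t : |z_t| \geq r\}$. Choosing $r$ small makes $|R(t,z,\omega)|$ absorbable into $\tfrac{1}{2}\lambda' |z|$ on that interval; ergodicity of the shift together with the $L^\rho$-integrability of $\ta$ would then upgrade the pointwise bound to $\tau_r = \infty$ almost surely whenever $|y_0 - \ta_0|$ is sufficiently small, delivering Theorem \ref{local_diss_attractor}.

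For the discrete scheme I would mirror the argument with the regular (Euler-type) discretization of the Doss--Sussmann ODE. The exact conjugation identity fails in discrete time, so the mismatch between the discrete and the continuous flow has to be written as a compensated sum of local increments and estimated by the sewing lemma, which produces an additional error of order $\Delta^{\nu}$ for some $\nu>0$ depending on the H\"older regularity of $x$. The stopping-time/variation-of-constants machinery then carries over once $L_g$ in \eqref{Lgdis} and the step size $\Delta$ are small enough to absorb this extra error, yielding Theorem \ref{stable1}. The main technical obstacle in both settings will be verifying that the random constants appearing in the cocycle estimates and in the sewing-lemma bounds have sufficiently integrable tails under the ergodic measure; only then does the Gronwall loop close on a \emph{deterministic} neighbourhood of $\ta_0$, and this is precisely where the $L^\rho$-integrability of $\ta$ together with the boundedness and small-growth hypotheses on $g$ and its derivatives jointly come into play.
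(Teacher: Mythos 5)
Your high-level skeleton (Doss--Sussmann transform, greedy stopping times, Birkhoff ergodic theorem) matches the paper's, but the middle of the argument is genuinely different and, as written, has gaps that the paper's actual proof is specifically designed to avoid.

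For the continuous case, the paper does \emph{not} linearize the transformed ODE at $\bar u_t$ and does \emph{not} invoke any cocycle robustness or variation-of-constants argument. Instead it writes $\gamma_t = z_t - \bar z_t$ and derives the scalar differential inequality
$\frac{d}{dt}\|\gamma_t\|^2 \leq 2\kappa(\lambda,\ta_t,r)\|\gamma_t\|^2$
directly from the \emph{one-sided Lipschitz} estimate $\langle y-\hat y, f(y)-f(\hat y)\rangle \leq M(Df,\hat y,r)\|y-\hat y\|^2$, where the corrections coming from $\psi_t,\bar\psi_t$ in the Doss--Sussmann conjugation are bounded uniformly on each stopping-time block by $O(\lambda)$ via Propositions~\ref{solest}--\ref{solestdiff}. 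Gronwall then gives an explicit exponential bound whose rate is $\E\kappa(\lambda,\ta(\cdot),r)$, and ergodicity converts the time average into this expectation. Your route --- writing $\dot z_t = A_t(\omega)z_t + R$, proving that $\dot w = A_t w$ decays exponentially via a ``roughness/continuity-of-cocycle'' argument, and absorbing $R$ --- needs a persistence theorem for exponential stability of a non-autonomous random cocycle under an $O(C_g)$-perturbation, together with integrability of the random constants $C(\omega)$ in $\|\Phi(t,s,\omega)\|\le C(\omega)e^{-\lambda'(t-s)}$. You correctly flag this integrability as the main obstacle, but you do not resolve it, and it is in general \emph{false} under the bare hypothesis that the linearization has negative top Lyapunov exponent (the Oseledets tempered constants need not be integrable). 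The paper sidesteps all of this by using the strictly stronger and explicitly quantitative criterion $\E\,\ell(f,\ta(\cdot)) < 0$ (i.e.\ $\E\sup_{\|h\|=1}\langle h, Df(\ta)h\rangle<0$) with a margin over $4\lambda\E N^*$; this is the ``stability criterion'' alluded to in the theorem statement, and it is what makes the scalar Gronwall loop close without any cocycle theory. Also note the paper's stability radius $R(\omega)$ in \eqref{Romega} is \emph{random}, consistent with Definition~\ref{Defstability}; aiming for a deterministic neighbourhood, as you do at the end, is neither needed nor achieved.

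For the discrete scheme your plan diverges more seriously: you propose to discretize the Doss--Sussmann ODE and control the mismatch with the continuous flow via a sewing estimate of size $O(\Delta^\nu)$. The paper explicitly states that the Doss--Sussmann transform \emph{cannot} be applied to the discrete system, and instead works entirely within the discrete Euler recursion \eqref{REuler}: it defines a discrete stationary solution $\ta^\Delta$, sets $h_{t_k}=y^\Delta_{t_k}-\ta^\Delta_{t_k}$, expands $\|h_{t_{k+1}}\|^2$ exactly, and controls the rough increments $P_{s,t}$ on discrete stopping-time blocks via the discrete sewing lemma (Proposition~\ref{hnew}, Proposition~\ref{case1}). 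Comparing the discrete scheme to the continuous flow, as you suggest, produces an error that accumulates over the infinite time horizon and does not on its own yield exponential stability of the discrete stationary solution; the paper's intrinsic discrete argument is what makes the result hold for a fixed $\Delta$ rather than only in the $\Delta\to 0$ limit.
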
   
	 Note that the linearized system $\dot{\xi} = Df(\ta_t)\xi$ is exponentially stable under the sufficient condition $\E \ell(f,\ta(\cdot)) <0$ where $\ell(f,y):=\sup \limits_{\|h\| =1} \langle h, Df(y)h \rangle$. This is due to an intuitive estimate 
    \[
    \frac{1}{t} \Big(\log \|\xi_t\|-\log \|\xi_0\|\Big) = \frac{1}{t} \int_0^t \langle \frac{\xi_s}{\|\xi_s\|}, Df(\ta(\theta_s \omega))\frac{\xi_s}{\|\xi_s\|} \rangle ds \leq \frac{1}{t} \int_0^t  \ell(f,\ta(\theta_s \omega))ds \to \E \ell(f,\ta(\cdot)).
    \]
    Theorem 1.1 is applicable to the class of equations \eqref{RDE1} with $f$ globally Lipschitz continuous and strictly dissipative and smooth $g$ with $C_g$ small enough, because one can prove the existence of a singleton pullback attractor, which is integrable and converges to the unique fixed point $\ta^*$ (also the global attractor) of the unperturbed system $\dot{u} = f(u)$ as $C_g$ tends to zero (see \cite[Theorem 3.3]{duc21}). In that case, condition $\E \ell(f,\ta(\cdot)) \approx \ell(f,\ta^*)<0$ is satisfied for sufficiently small $C_g$. For more concrete examples, see Example \ref{expitchfork}, \ref{FHNex}.\\
	Our method employs the direct method of Lyapunov, which aims to estimate the norm growth (or a Lyapunov-type function) of the solution in discrete time intervals. For the continuous time set, this technique is feasible thanks to the so-called {\it Doss-Sussmann technique} to transform the rough differential equation on each stopping time interval to an ordinary differential equation, that can be seen as a non-autonomous perturbation of the unperturbed ODE. The tricky part is proving stability, which means to control the norm difference between a solution and the reference stationary solution to be less than a parameter $\epsilon$ on each stopping time interval. This can be done in an indirect way by comparing the norm difference between the two images after the Doss-Sussmann transformation, thanks to Proposition \ref{solest} and Proposition \ref{solestdiff} which are the improved results in \cite{duc21} for $C_g$ defined in \eqref{gcond.new}. Exponential attractivity is then an indirect consequence of the Birkhoff's ergodic theorem (via the crucial Lemma \ref{infimumD}), provided that the generated {\it Wiener shift} is ergodic (this holds for fractional Brownian motions, see Lemma \ref{ergodicity} in Appendix \ref{appen}). The results in Theorem \ref{local_diss_attractor} and Theorem \ref{main1}, with stability criteria depending on the parameter $C_g$ in \eqref{gcond.new}, are significantly stronger and more practical than the ones in  \cite[Theorem 18]{GABSch18} or recently in \cite{riedel25} for general stationary solutions. Moreover, no further information on the diffusion $g$ or its derivatives is required, hence many complicated tasks when working with the linearization of \eqref{RDE1} can be avoided.
	
	The situation is rather complicated for the discretized system of \eqref{RDE1}, because the Doss-Sussmann transformation fails to control the solution norm to be sufficiently small in the discrete time set, and in fact the solution can exit the local regime of the trivial solution right after one discrete time step. To overcome this difficulty, in Section \ref{Sectime} we develop further the stopping time technique in our previous work \cite{congduchong23} for discrete time sets, to estimate the norm growth of the solution on each stopping time interval, see e.g. Proposition \ref{hnew}. Another important step is to prove Proposition \ref{case1}, where we apply the discrete sewing lemma \cite{davie}, \cite{lejay} to estimate the difference between the solution starting at a stopping time $\tau_0^\Delta$ from an initial value $y_{\tau^\Delta_0}$ inside a random neighborhood of $\ta(\omega)$ and the reference solution $\ta(\theta_\cdot \omega)$ at the next stopping time $\tau_1^\Delta$. This crucial estimate and the induction principle then help control the difference between two solutions to be sufficiently small, and finally lead to exponential stability. The stability criteria in Theorem \ref{stable1}, Theorem \ref{disccridiss} and Theorem \ref{main1_new} are new, where the choice of the parameter $L_g$ defined in \eqref{Lgdis} and the step size $\Delta$ are independent and pathwise free. 
	
	When the assumption on ergodicity of the generated Wiener shift is relaxed, the ergodic Birkhoff theorem is still applicable in estimating stopping times, but results in random variable limits. In this case, all conclusions in the main theorems still hold almost surely, but all the stability criteria and parameters would be path dependent. In addition, our method still works for a lower regularity coefficient $\nu \leq \frac{1}{3}$, although the computations would be rather complicated. Moreover, it could also be applied in case the diffusion part $g$ is linear, or in case $f$ also depends on time, i.e. $f(y)$ is replaced by $f(t,y)$. 
    
	We close the introduction part with the note that our stability criteria for stationary solutions agree with the previous ones in \cite{ducSD22} that $C_g$ in \eqref{gcond.new} and $L_g$ in \eqref{Lgdis} can be sufficiently small by choosing $\|Dg\|_\infty \vee \|D^2g\|_\infty \vee \|D^3g\|_\infty $ to be sufficiently small, thus in general $\|g\|_\infty$ can be very large. The following counter example shows that choosing large $C_g$ and $L_g$ might break stability.
	
	\begin{example}
		Consider the It\^o stochastic differential equation
		\begin{equation}\label{ex1} 
			d\Big(\begin{array}{cc} y_1 \\y_2\end{array}\Big) = \Big(\begin{array}{cc} y_1(\mu-y_1^2-y_2^2) \\ y_2(\mu-y_1^2-y_2^2) \end{array}\Big) dt + \Big(\begin{array}{ccc} 0 & \sigma  \\ -\sigma & 0 \end{array}\Big) \Big(\begin{array}{cc} y_1 \\ y_2 \end{array}\Big) dB_t
		\end{equation}
		or in short $dy= f(y)dt + g(y)dB_t$, where $B$ is a scalar standard Brownian motion with all realization in $C^\nu$ for $\nu \in (\frac{1}{3},\frac{1}{2})$; and $\mu,\sigma$ are real constants. We omit the issue of existence and uniqueness of the solution of equation \eqref{ex1}, noting that it can be solved either in the It\^o sense using polar coordinates $y_1 =r \sin \alpha, y_2 = r \cos \alpha$ or in the pathwise sense as a rough differential equation (see e.g. \cite[Theorem 2.1]{duc21} and \cite[Theorem 5.1]{duckloeden}). In particular, one can apply It\^o formula to check that
		\[
		d\|y\|^2 = \Big[2 \langle y, f(y) \rangle + \|g(y)\|^2\Big]dt + 2 \langle y, g(y) \rangle dB_t = \Big( 2 \mu + \sigma^2  - 2\|y\|^2 \Big) \|y\|^2  dt.
		\]
		In other words, $\eta =\|y\|^2$ is the solution of the ordinary differential equation
		\begin{equation}\label{ex2} 
			\frac{d}{dt}{\eta} = \eta ( 2 \mu + \sigma^2  - 2\eta).
		\end{equation}
		Clearly, zero is the trivial solution of \eqref{ex1}. For $\sigma = 0$ and $\mu <0$, system \eqref{ex1} is an ordinary differential equation, which admits zero as an globally asymptotically stable solution. However, when $\sigma >0$ large enough such that $\sigma^2 + 2\mu >0$, then the zero solution of \eqref{ex2} becomes unstable. In other words, the zero solution of \eqref{ex1} will break its stability when perturbed by the linear noise for $\sigma$ large enough.  
	\end{example} 
	
	 The paper is organized as follows. In Section 2 we present assumptions for \eqref{RDE1} and recall Doss-Sussmann transformations used for the associate pure rough differential equation of \eqref{RDE1}. Section 3 discusses the construction of stopping times for both continuous and discrete time sets. In Section 4, we establish in Theorem \ref{local_diss_attractor} an exponential stability criterion of stationary solution of  \eqref{RDE1}. Section 5 is devoted to the study of discrete systems, where a similar stability criterion is proved in Theorem \ref{stable1}. We present in the appendix some basic definitions on rough paths, rough integrals and technical proofs for the auxiliary results.
    
	\section{Rough differential equations, Doss-Sussmann technique}\label{existunique}
Throughout the paper, we will assume the following global conditions on the equation \eqref{RDE1}.\\

(${\textbf H}_{f}$) 
 $f$ is locally Lipschitz continuous and of one-sided linear growth
\begin{equation}\label{onesideLipschitz}
\exists C>0:	\langle y,f(y) \rangle \leq C(1+ \|y\|^2),\quad \forall y \in \R^d; 
\end{equation}
in addition $f$ is of linear growth in the perpendicular direction, i.e. there exists $C_f>0$ such that
\begin{equation}\label{lineargrowth}
\Big\| f(y) - \frac{\langle f(y), y \rangle }{\|y\|^2}y\Big\|\leq C_f \Big(1+\|y\|\Big),\quad \forall y \ne 0.
\end{equation}

(${\textbf H}_g$) $g$ is in $C^3_b(\R^d,\cL(\R^m,\R^d))$ where we define
\begin{eqnarray}\label{gcond.new}
	C_g &:=& \max \Big\{\|Dg\|_\infty,\sqrt{\|Dg\|_\infty \|g\|_\infty},\sqrt{\|D^2g\|_\infty \|g\|_\infty},\notag\\ 
    &&\qquad \qquad \sqrt{\|D^3g\|_\infty \|g\|_\infty},\Big( \|D^2g\|_\infty\|g\|^2_\infty \Big)^{\frac{1}{3}}, \Big(\|D^3g\|_\infty\|g\|^2_\infty\Big)^{\frac{1}{3}}\Big\}.
\end{eqnarray}
	
	(${\textbf H}_X$) for a given $\nu \in(\frac{1}{3},\frac{1}{2}]$, $x \in C^{\nu}(\R, \R^m)$ - the space of all H\"older continuous paths such that $x$ is a realization of a stochastic process $X_t(\omega)$ with stationary increments and that $x$ can be lifted into a realized component $\bx = (x,\X)$ of a stochastic process $(x_\cdot(\omega),\X_{\cdot,\cdot}(\omega))$ with stationary increments. Moreover the estimate
	\begin{equation}\label{conditionx}
		E \Big(\|x_{s,t} \|^p +\|\X_{s,t}\|^{q}\Big)\leq C_{T,\nu} |t-s|^{p \nu },\qquad\forall s,t \in [0,T],
	\end{equation}
	holds for any $[0,T]\subset [0,\infty)$, with $p\nu \geq 1, q = \frac{p}{2}$ and some constant $C_{T,\nu}$. \\
	
	Concerning the assumption (${\textbf H}_X$), such a stochastic process, in particular, can be a fractional Brownian motion $B^H$  with Hurst exponent $H \in (\frac{1}{3},1)$.
	
	The existence and uniqueness theorem for system \eqref{RDE1} is first proved in \cite{riedelScheutzow}, where the solution is understood in the sense of Friz \&Victoir \cite{friz}. Using rough path integrals \cite{gubinelli}, we interpret the rough differential equation \eqref{RDE1} by writing it in the integral form 
	\begin{equation}\label{RDEintegral}
		y_t = y_a + \int_a^t f(y_s)ds + \int_a^t g(y_s)dx_s, \quad \forall t \in [a,b], 
	\end{equation}
	for any interval $[a,b]$ and an initial value $y_a \in \R^d$. Then we search for a solution in the Gubinelli sense, and solve for a path $y$ which is controlled by $x$. We refer to \cite{duc20} and Appendix \ref{appen} for definitions of variation and H\"older norms, Gubinelli rough integrals for controlled rough paths. Assumptions (${\textbf H}_f$), (${\textbf H}_g$), (${\textbf H}_X$) are sufficient to prove the existence and uniqueness of the solution of \eqref{RDE1} defined for any initial value $y_0\in\R^d$,
    as well as the continuity of the solution semi-flow and the generation of a continuous random dynamical system, see e.g. \cite[Theorem 4.3]{riedelScheutzow}, \cite{BRSch17}, \cite{duc21} and \cite{duckloeden}. 
	 
	Note that from \cite[Theorem 3.4]{duc20}, the global solution $\phi_\cdot(\bx,\phi_a)$ of the {\it pure} rough differential equation 
	\begin{equation}\label{pure}
		d \phi_u = g(\phi_u)dx_u,\quad u \in [a,b], \phi_a \in \R^d 
	\end{equation}
	is $C^1$ w.r.t. $\phi_a$, and $\frac{\partial \phi }{\partial \phi_a}(\cdot,\bx,\phi_a)$ is the solution of the linearized system 
	\begin{equation}\label{linearized}
		d\xi_u = Dg(\phi_u(\bx,\phi_s)) \xi_u dx_u,\quad u \in [a,b], \xi_a = Id, 
	\end{equation}
	where $Id \in \R^{d\times d}$ denotes the identity matrix. The idea is then to prove the existence and uniqueness of the global  solution on each small interval $[\tau_k,\tau_{k+1}]$ between two consecutive stopping times,
	and then concatenate to obtain the conclusion on any interval.
	The Doss-Sussmann technique used in \cite[Theorem 3.7]{duc20}  and \cite{riedelScheutzow} ensures 
	that, by a transformation $y_t = \phi_t(\bx,z_t)$ there is an one-to-one correspondence between a solution $y_t$ of \eqref{RDE1} on a certain interval $[0,\tau]$ 
	and a  solution $z_t$ of the associate ordinary differential equation
	\begin{equation}\label{ascoODE}
		\dot{z}_t = \Big[\frac{\partial \phi }{\partial z}(t,\bx,z_t)\Big]^{-1} f( \phi_t(\bx,z_t)),\quad t \in [0,\tau],\ z_0 = y_0.
	\end{equation}
	The parameter $\tau >0$ can be chosen such that 
	\begin{equation*}
		8 C_p C_g \ltn \bx \rtn_{\tp,[0,\tau]} \leq \lambda^*,\quad \text{for some}\quad \lambda^* \in (0,1), 
	\end{equation*}
	where $\ltn \bx\rtn_{\tp,[a,b]}$ is defined in \eqref{pvarnorm}, $C_g$ is defined in \eqref{gcond.new},  and $C_p$ is defined in \eqref{roughpvar}.
	
	The following result is an improvement of \cite[Proposition 1]{duc21} and \cite[Proposition 2.3]{duckloeden}, which shows solution norm estimates for equation \eqref{pure} (see the proofs in Appendix \ref{proofs}). Note that the estimates are dependent on $C_g$ from \eqref{gcond.new} and thus dependent on all the derivatives of $g$. 
\begin{proposition}\label{solest}
			Let $\phi_t$ be a solution of \eqref{pure}. Assume the condition (${\textbf H}_g$). Suppose further that $\lambda :=C_p C_g \ltn \bx \rtn_{\tp,[a,b]} \leq 1/8$.
            
(i) The following inequalities hold
\begin{equation}
\ltn \phi \rtn_{\tp,[a,b]} \leq 2(\|g\|_\infty \ltn x \rtn_{\tp, [a,b]}+\lambda^2),\quad 	\ltn R^{\phi} \rtn_{\tp,[a,b]} \leq 2\lambda^2; \label{solest1a}
\end{equation}
and in case the condition $g(0)=0$ is satisfied then
			\begin{equation}\label{solest1}
				\ltn \phi \rtn_{\tp,[a,b]} \leq 2\lambda\|\phi_a\|,\quad 	\ltn R^{\phi} \rtn_{\tp,[a,b]} \leq 2\lambda^2 \|\phi_a\|.  
			\end{equation}
            
(ii) Furthermore, it holds for $t\in [a,b]$ that
\begin{equation}
\Big\| \frac{\partial \phi }{\partial \phi_a}(t,\bx,\phi_a) - I \Big\|, \Big\| \Big[\frac{\partial \phi }{\partial \phi_a}(t,\bx,\phi_a)\Big]^{-1} - I\Big\|\leq 4\lambda.\label{solest3}
\end{equation}           
\end{proposition}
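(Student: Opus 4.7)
I would follow the classical Gubinelli--sewing strategy for rough integral estimates, with the new feature that every mixed-derivative product arising in the sewing step is absorbed into a single power of $\lambda := C_p C_g \ltn \bx \rtn_{\tp,[a,b]}$ thanks to the bundling of norms in \eqref{gcond.new}. Since $\phi$ is the Gubinelli solution of $d\phi = g(\phi)dx$, it is a controlled rough path with Gubinelli derivative $g(\phi)$ and admits on every $[s,t]\subset [a,b]$ the expansion
\[
\phi_{s,t} = g(\phi_s)\, x_{s,t} + Dg(\phi_s)g(\phi_s)\, \X_{s,t} + R^{\phi}_{s,t}.
\]
The first step is to apply the sewing lemma to $\Xi_{s,t} := g(\phi_s)x_{s,t} + Dg(\phi_s)g(\phi_s)\X_{s,t}$. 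Expanding $\delta\Xi_{s,u,t}$ with Taylor's formula and using $\phi_{s,u} = g(\phi_s)x_{s,u}+O(\lambda^2)$ leads to a bound on $|R^{\phi}_{s,t}|$ by a sum of cross terms of the shape $\|D^k g\|_\infty \|g\|_\infty^{j}\ltn x\rtn_{\tp}^a \ltn \X\rtn_{\tq}^b |t-s|^{3/p}$. The algebraic heart of the proof is that each such cross product is dominated by $\lambda^2$ or $\lambda^3$: the six entries inside the maximum defining $C_g$ in \eqref{gcond.new}, notably the cube-root combinations $(\|D^2g\|_\infty\|g\|_\infty^2)^{1/3}$ and $(\|D^3g\|_\infty\|g\|_\infty^2)^{1/3}$, are tailored so that, for instance, $\|D^2g\|_\infty\|g\|_\infty^2 \ltn x\rtn_{\tp}^3 \le C_g^3 \ltn\bx\rtn_{\tp}^3 \le \lambda^3/C_p^3$.

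The second step is to substitute the resulting estimate $\ltn R^\phi \rtn_{\tq,[a,b]}\le 2\lambda^2$ back into the decomposition, giving
\[
\ltn \phi \rtn_{\tp,[a,b]} \le \|g\|_\infty \ltn x \rtn_{\tp,[a,b]} + \|Dg\|_\infty \|g\|_\infty \ltn \X \rtn_{\tq,[a,b]} + \ltn R^\phi \rtn_{\tp,[a,b]} \le 2\|g\|_\infty \ltn x \rtn_{\tp,[a,b]} + 2\lambda^2,
\]
which is \eqref{solest1a}. For the homogeneous case $g(0)=0$, I would replace $\|g(\phi_s)\|$ by $\|Dg\|_\infty \|\phi_s\|$ throughout; this inserts an extra factor of $\|\phi\|_{\infty,[a,b]}$ in every sewing term, which is first controlled via $\|\phi\|_{\infty,[a,b]}\le (1+C\lambda)\|\phi_a\|$ (obtained by iterating on a greedy partition on which $\lambda$ is small) and then factored out to yield \eqref{solest1}.

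Finally, for part (ii), $\partial \phi / \partial \phi_a$ solves the linear rough equation \eqref{linearized} with initial value $I$. I would apply exactly the same sewing machinery to the controlled rough path $\xi - I$ (Gubinelli derivative $Dg(\phi)\xi$, vanishing initial value): the estimates produce $\|\xi_t - I\| \le 4\lambda$ on $[a,b]$, and the Neumann series then delivers the same bound for the inverse since $4\lambda \le 1/2$. The main obstacle I expect is pure bookkeeping: verifying that each cross product $\|D^k g\|_\infty \|g\|_\infty^{j}$ produced by the sewing lemma corresponds precisely to one of the six quantities inside the max in \eqref{gcond.new}, and that the numerical constants line up so that the factors $2$, $2$, $4$ in \eqref{solest1a}--\eqref{solest3} are attained exactly at the threshold $\lambda \le 1/8$. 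The appearance of $\|D^3g\|_\infty$ in \eqref{gcond.new} is needed specifically for part (ii), where linearising the coefficient $Dg(\phi)$ in the Jacobian equation produces cubic Taylor remainders of order $\|D^3g\|_\infty$.
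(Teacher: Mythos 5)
Your plan for part (i) is essentially the paper's argument: apply the sewing estimate \eqref{roughpvar} to $\int g(\phi)\,dx$, bound the Gubinelli derivative $[g(\phi)]'=Dg(\phi)g(\phi)$ and its remainder by products of $\|D^kg\|_\infty$, $\|g\|_\infty$, $\ltn x\rtn$, $\ltn\X\rtn$, observe that every such cross product is dominated by a power of $C_g\ltn\bx\rtn_{\tp}$ because of the bundled definition \eqref{gcond.new}, and then absorb the self-referential terms using $\lambda\le1/8$. One imprecision worth flagging: your ``step 1'' produces the bound on $\ltn R^\phi\rtn_{\tq}$ \emph{before} any bound on $\ltn\phi\rtn_{\tp}$, but the sewing estimate for $R^\phi$ contains $\ltn\phi\rtn_{\tp}$ on the right-hand side (through $\ltn[g(\phi)]'\rtn_{\tp}$ and $\ltn R^{g(\phi)}\rtn_{\tq}$), so the two estimates are coupled. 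The paper resolves this by first bounding $\ltn\phi\rtn_{\tp}\vee\ltn R^\phi\rtn_{\tq}$ jointly via a single absorption, and only then improving the $R^\phi$ bound to $2\lambda^2$ by back-substitution; your write-up tacitly assumes the a priori ``$\phi_{s,u}=g(\phi_s)x_{s,u}+O(\lambda^2)$'', which is precisely what needs the joint fixed-point. Likewise, for the case $g(0)=0$ the paper does not iterate over a greedy partition: since $\lambda\le1/8$ already holds on all of $[a,b]$, the rewriting $\|g(\phi)\|_{\infty,[s,t]}\le\|Dg\|_\infty(\|\phi_a\|+\ltn\phi\rtn_{\tp,[s,t]})$ lets one absorb the $\ltn\phi\rtn$ term directly in the same single-step argument; your extra iteration is not wrong, but it is unnecessary and obscures the constants.

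There is, however, a genuine gap in your treatment of part (ii). Your sewing argument for the Jacobian equation would indeed give $\|\xi_t-I\|\le4\lambda$, but the Neumann series cannot produce the same constant for the inverse: if $\|A-I\|\le4\lambda$ then $A^{-1}-I=\sum_{k\ge1}(I-A)^k$ gives only $\|A^{-1}-I\|\le\frac{4\lambda}{1-4\lambda}$, which at $\lambda=1/8$ is $8\lambda$, not $4\lambda$. To get $4\lambda$ for the inverse one must apply the same sewing estimate to the RDE satisfied by $\zeta_t:=\big[\tfrac{\partial\phi}{\partial\phi_a}(t)\big]^{-1}$, namely $d\zeta=-\zeta\,Dg(\phi)\,dx$ with $\zeta_a=I$ (or, as the paper does, prove the Lipschitz bound $\ltn\bar\phi-\phi\rtn_{\tp}\le4\lambda\|\bar\phi_a-\phi_a\|$ on the \emph{difference of two solutions} and then pass to the Gâteaux derivative in both the forward and backward directions). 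Your overall plan is sound and matches the paper's; this one step needs to be replaced.
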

We would also like to modify the result in \cite[Proposition 4]{duc21} as follows (see the proof in Appendix \ref{proofs}). 

\begin{proposition}\label{solestdiff}
Consider the Doss-Sussmann transformations  $y_t = \phi_t(\bx,z_t)$ and  $\bar{y}_t = \phi_t(\bx,\bar{z}_t)$. Assign
		\begin{equation*}
			\eta_t = y_t - z_t;\quad \bar{\eta}_t = \ty_t - \tz_t;\quad \psi_t =\Big[\frac{\partial \phi }{\partial z}(t,\bx,z_t)\Big]^{-1} - Id;\quad \tpsi_t =\Big[\frac{\partial \phi }{\partial z}(t,\bx,\tz_t)\Big]^{-1} - Id;
		\end{equation*}
	Given $\lambda=C_pC_g \ltn \bx \rtn_{\tp,[a,b]} \leq \frac{1}{8}$, the following estimates hold
		\begin{equation}\label{difference}
			\|\bar{\eta}_t- \eta_t\| \leq 4\lambda  \|\tz_t-z_t\|; \quad
			\|\tpsi_t - \psi_t\| \leq  256\lambda \|\tz_t-z_t\|,\quad \forall t\in [a,b]. 
		\end{equation}   
\end{proposition}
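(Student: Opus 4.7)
My plan is to reduce both bounds to Proposition~\ref{solest}(ii), which supplies $\|\partial_z\phi_t(\bx,z)-Id\|\le 4\lambda$ and $\|[\partial_z\phi_t(\bx,z)]^{-1}-Id\|\le 4\lambda$ uniformly in $z\in\R^d$ and $t\in[a,b]$. For the first inequality, I would rewrite
\[
\bar\eta_t-\eta_t \;=\; \bigl(\phi_t(\bx,\tz_t)-\tz_t\bigr)-\bigl(\phi_t(\bx,z_t)-z_t\bigr)
\]
and apply the fundamental theorem of calculus to the $C^1$ map $z\mapsto \phi_t(\bx,z)-z$ along the segment from $z_t$ to $\tz_t$: its differential is $\partial_z\phi_t(\bx,\cdot)-Id$, whose operator norm is bounded by $4\lambda$ via \eqref{solest3}, so integrating immediately yields $\|\bar\eta_t-\eta_t\|\le 4\lambda\|\tz_t-z_t\|$.

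For the second inequality I set $A:=\partial_z\phi_t(\bx,z_t)$ and $\bar A:=\partial_z\phi_t(\bx,\tz_t)$ and invoke the matrix identity $\bar A^{-1}-A^{-1}=\bar A^{-1}(A-\bar A)A^{-1}$. Since $\|A-Id\|,\|\bar A-Id\|\le 4\lambda\le 1/2$, a Neumann-series argument gives $\|A^{-1}\|,\|\bar A^{-1}\|\le 2$, so the target bound $256\lambda\|\tz_t-z_t\|$ will follow from the Lipschitz estimate $\|A-\bar A\|\le 64\lambda\|\tz_t-z_t\|$. I would obtain the latter by realising $A,\bar A$ as the time-$t$ values of the solutions $\xi_\cdot,\bar\xi_\cdot$ of the linearised pure rough equation~\eqref{linearized} along the pure flows $\Phi_u:=\phi_u(\bx,z_t)$ and $\bar\Phi_u:=\phi_u(\bx,\tz_t)$, both started from $Id$, and noting that $\Delta_u:=\bar\xi_u-\xi_u$ solves the inhomogeneous linear rough equation
\[
d\Delta_u \;=\; Dg(\Phi_u)\Delta_u\,dx_u + \bigl[Dg(\bar\Phi_u)-Dg(\Phi_u)\bigr]\bar\xi_u\,dx_u,\qquad \Delta_a=0.
\]

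The main obstacle is this last rough-path estimate for $\Delta$. The pointwise magnitude of the forcing is at most $\|D^2g\|_\infty\|\bar\Phi-\Phi\|_\infty\|\bar\xi\|_\infty\le 4\|D^2g\|_\infty\|\tz_t-z_t\|$, where $\|\bar\Phi-\Phi\|_\infty\le 2\|\tz_t-z_t\|$ and $\|\bar\xi\|_\infty\le 2$ follow from Proposition~\ref{solest}(ii) applied to $\phi$ and to its linearisation. Closing the estimate in $\tp$-variation, however, also requires Gubinelli-derivative bounds on both the coefficient $Dg(\Phi)$ and on the forcing $[Dg(\bar\Phi)-Dg(\Phi)]\bar\xi$, which bring in $\|D^2g\|_\infty,\|D^3g\|_\infty$ together with the variation norms of $\Phi,\bar\Phi,\bar\xi$ already controlled in Proposition~\ref{solest}(i). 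I would then run the same sewing-lemma-plus-contraction scheme used to prove Proposition~\ref{solest}(ii), now with inhomogeneous data, relying on $\lambda\le 1/8$ so that the homogeneous part is a strict contraction in $\tp$-variation, and exploiting the inequalities $C_g^2\ge\|D^2g\|_\infty\|g\|_\infty$ and $C_g^3\ge\|D^3g\|_\infty\|g\|_\infty^2$ hidden in \eqref{gcond.new} to regroup every coefficient into multiples of $\lambda=C_pC_g\ltn\bx\rtn_{\tp,[a,b]}$. The only delicate point is tracking the absolute constants so that the final prefactor lands on $64$; the architecture of the argument is otherwise identical to that of Proposition~\ref{solest}.
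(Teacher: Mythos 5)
Your proposal is correct in its overall structure and lands on essentially the same argument that the paper uses (the paper itself proves the first inequality directly and defers the second to \cite[Proposition~4]{duc21} with a one-line remark on constants).

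For the first bound, the paper writes $\bar\eta_t-\eta_t=\int_a^t\bigl[g(\phi_u(\bx,\bar z_t))-g(\phi_u(\bx,z_t))\bigr]\,dx_u=(\bar\phi-\phi)_{a,t}$ and quotes the variation estimate $\ltn\bar\phi-\phi\rtn_{\tp,[a,b]}\le 4\lambda\|\bar z_t-z_t\|$ from the proof of Proposition~\ref{solest}; you instead apply the fundamental theorem of calculus to $z\mapsto\phi_t(\bx,z)-z$ and use the Jacobian bound \eqref{solest3}. The two are genuinely equivalent one-liners — both are direct consequences of Proposition~\ref{solest} — but yours is arguably cleaner and avoids having to dig into the intermediate display \eqref{phidiff_new} hidden inside the proof of Proposition~\ref{solest}.

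For the second bound, your reduction via the resolvent identity $\bar A^{-1}-A^{-1}=\bar A^{-1}(A-\bar A)A^{-1}$ together with $\|A^{-1}\|,\|\bar A^{-1}\|\le 1+4\lambda$, and the identification of $A-\bar A$ as $\xi_t-\bar\xi_t$ solving an inhomogeneous linearised rough equation, is exactly the architecture the paper is invoking when it says the proof ``goes line by line with \cite[Proposition~4]{duc21} but using the estimates in the proof of Proposition~\ref{solest}.'' You correctly flag that the remaining work is a sewing-lemma contraction estimate for $\Delta_u=\bar\xi_u-\xi_u$, whose coefficients regroup into powers of $\lambda$ via the definition of $C_g$ in \eqref{gcond.new}. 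What you do not do — and honestly acknowledge not doing — is carry the $p$-variation bookkeeping through to verify that the absolute constant comes out $\le 64$ (so that $(1+4\lambda)^2\cdot 64\le 256$). That verification is the only real content of the proof, and the paper likewise does not display it, so there is no discrepancy; but if you were submitting this as a standalone proof rather than a review of the paper's, you would need to actually run the Gubinelli-derivative estimates for $Dg(\Phi)$ and for the forcing $[Dg(\bar\Phi)-Dg(\Phi)]\bar\xi$ and close the fixed-point inequality.
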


    Note that all the above arguments might fail to be applied if we work directly with the local solution for $y_0 \in B(0,\epsilon_0)$. This is because at stopping time interval $[\tau_k,\tau_{k+1}]$ the Doss-Sussmann transformation $y_t = \phi_t(\bx,z_t)$ that starts from $y_{\tau_k} \in B(0,\epsilon_0)$ might exit $B(0,\epsilon_0)$ soon before $\tau_{k+1}$. Hence, in general, it would be rather technical to estimate the exit time of the local solution from $B(0,\epsilon_0)$.

		\section{Stopping time analysis}\label{Sectime}
		\subsection{Stopping times for the continuous time case}
		The construction of a {\it greedy sequence of stopping times} in \cite{cassetal} is now used in many recent results, see e.g. \cite{congduchong17,ducGANSch18,duc20,duc21,riedelScheutzow}. Namely, for any fixed $\gamma \in (0,1)$ the sequence of stopping times $\{\tau_i(\gamma,\bx,[0,\infty))\}_{i \in \N}$ is defined by
		\begin{equation}\label{greedytime}
			\tau_0 = 0,\quad \tau_{i+1}:= \inf\Big\{t>\tau_i:  \ltn \bx \rtn_{\tp, [\tau_i,t]} = \gamma \Big\}.
		\end{equation}
		For a fixed closed interval $I\subset [0,\infty)$, we define another sequence of stopping times $\{\tau^*_i(\gamma,\bx,I)\}_{i \in \N}$  by
		\begin{equation}\label{greedytime*}
			\tau^*_0 = \min{I},\quad \tau^*_{i+1}:= \inf\Big\{t>\tau^*_i:  \ltn \bx \rtn_{\tp, [\tau_i,t]}  = \gamma \Big\} \wedge \max{I}.
		\end{equation}
		Define $N^*(\gamma,\bx,I) :=\sup \{i \in \N: \tau^*_i < \max{I}\}+1$. It is easy to show a rough estimate 
		\begin{equation}\label{Nest}
			N^*(\gamma,\bx ,I) \leq 1 +\frac{1}{\gamma^p}\ltn \bx \rtn^p_{\tp,I}.
		\end{equation}
		In fact, it is proved in \cite{cassetal} that $e^{N^*(\gamma,\bx,I)}$ is integrable for Gaussian rough paths.
		
		Denote $\theta$ the Wiener-type shift in the probability space $\Omega:=\cC_0^{0,\alpha}(\R,T^2_1(\R^m))$  (see \eqref{shift} in Appendix \ref{probset}). Throughout this paper, we will assume that $\theta$ is ergodic. 
        
		The following lemma is reformulated from \cite[Theorem 14]{ducGANSch18} for the stopping times defined in the $p$-variation norm. To make the presentation self-contained, we are going to give a short and direct proof here.
		\begin{lemma}\label{infimumD}
			Given the greedy sequence of stopping times \eqref{greedytime}, the followings estimates
			\begin{equation}\label{infD}
				\liminf \limits_{n \to \infty} \frac{\tau_n}{n} \geq \frac{1}{\E N^*(\gamma,\bx ,[0,1])}
			\end{equation}
			holds almost surely.
		\end{lemma}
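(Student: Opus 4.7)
My plan is to reduce the claim to Birkhoff's ergodic theorem applied to the stationary sequence $N^*_k := N^*(\gamma,\bx,[k,k+1])$, via two elementary observations: an exact identification of $n$ with a fresh-greedy count on $[0,\tau_n]$, and a sub-additivity property of $N^*$ as a function of its interval.

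The first observation is that, since the global greedy sequence $\{\tau_i\}$ in \eqref{greedytime} and the fresh greedy sequence $\{\tau^*_i\}$ in \eqref{greedytime*} both start at $0$ and obey the same recursion, an immediate induction shows $\tau^*_i = \tau_i$ for every $i = 0,\dots,n$ whenever $T = \tau_n$. Consequently exactly $\tau^*_0,\dots,\tau^*_{n-1}$ lie strictly below $\max[0,\tau_n] = \tau_n$, giving the identity
\[
N^*(\gamma,\bx,[0,\tau_n]) \;=\; n.
\]
The second observation is the sub-additivity
\[
N^*(\gamma,\bx,[a,c]) \;\leq\; N^*(\gamma,\bx,[a,b]) + N^*(\gamma,\bx,[b,c]),\qquad a<b<c,
\]
which I would derive from the fact that greedy partitions are optimal. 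More precisely, I would show by induction that for any partition $a = t_0 < t_1 < \dots < t_N = c$ with $\ltn \bx \rtn_{\tp,[t_i,t_{i+1}]} \leq \gamma$ for each $i$, one has $\tau^*_i \geq t_i$ and hence greedy reaches $c$ in at most $N$ steps (the key monotonicity is that $[\tau^*_i,t_{i+1}] \subseteq [t_i,t_{i+1}]$ so the $p$-variation still does not exceed $\gamma$). Applying this to the concatenation of the fresh greedy partitions of $[a,b]$ and $[b,c]$ yields sub-additivity; iterating over unit intervals and using the obvious monotonicity $N^*([a,b]) \leq N^*([a,c])$ for $b\leq c$ gives
\[
N^*(\gamma,\bx,[0,T]) \;\leq\; \sum_{k=0}^{\lfloor T \rfloor} N^*_k,\qquad \forall T>0.
\]

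Combining the two facts at $T=\tau_n$ produces the deterministic bound $n \leq \sum_{k=0}^{\lfloor \tau_n \rfloor} N^*_k$. Because $\bx$ has stationary increments and the Wiener shift $\theta$ is ergodic, $N^*_k = N^*(\gamma,\theta_k\bx,[0,1])$ is a stationary ergodic sequence. The crude upper bound \eqref{Nest} combined with the moment estimate \eqref{conditionx} shows $N^*_0 \in L^1$, so that Birkhoff's ergodic theorem gives
\[
\frac{1}{M} \sum_{k=0}^{M-1} N^*_k \;\xrightarrow[M\to\infty]{}\; \E N^*(\gamma,\bx,[0,1])\qquad \text{almost surely}.
\]
The same finiteness of $p$-variation on bounded intervals that guarantees integrability also forces $\tau_n \to \infty$ a.s.\ (otherwise $N^*(\gamma,\bx,[0,T^*])$ would be infinite for some finite $T^*$). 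Dividing the inequality $n \leq \sum_{k=0}^{\lfloor \tau_n \rfloor} N^*_k$ by $\tau_n$ and letting $n\to\infty$ yields $\limsup_{n\to\infty} n/\tau_n \leq \E N^*(\gamma,\bx,[0,1])$, which is equivalent to \eqref{infD}.

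The main obstacle is the greedy-minimality lemma underlying sub-additivity; once that is established the rest is a clean combination of the identity $n = N^*(\gamma,\bx,[0,\tau_n])$, integrability of $N^*_0$, and Birkhoff's theorem applied to the shift.
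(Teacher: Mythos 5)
Your argument is correct, and its skeleton — reduce $n/\tau_n$ to an ergodic average of unit-interval counts, then apply Birkhoff — is exactly the paper's. The bookkeeping differs in one place that is worth noting. The paper introduces the auxiliary quantity $N(\gamma,\bx,[a,b))$, the number of \emph{global} stopping times $\tau_k$ falling in $[a,b)$, which is trivially additive over disjoint half-open intervals; it then bounds $k \leq N(\gamma,\bx,[0,\lfloor\tau_k\rfloor+1))$, splits this sum over unit intervals, and converts each term via the comparison $N(\gamma,\bx,[j,j+1)) \leq N^*(\gamma,\bx,[j,j+1])$ (their \eqref{Ncompare}). You instead work entirely with $N^*$, via the crisp identity $N^*(\gamma,\bx,[0,\tau_n]) = n$ (which the paper does not state but which follows from the agreement of the fresh and global greedy sequences on $[0,\tau_n]$) and a sub-additivity $N^*([a,c]) \leq N^*([a,b]) + N^*([b,c])$. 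The two routes carry about the same amount of hidden work: the sub-additivity you invoke and the comparison \eqref{Ncompare} the paper invokes are both instances of the same ``later start point yields later greedy stopping times'' monotonicity, and neither is spelled out in full detail. Your identity $N^*([0,\tau_n]) = n$ is a small improvement in clarity over the paper's detour through $\lfloor\tau_k\rfloor$ and the count-in-half-open-interval bound, while the paper's choice of the exactly-additive $N$ is a small improvement in that it needs no sub-additivity lemma. Both require integrability of $N^*(\gamma,\bx,[0,1])$ (from \eqref{Nest} together with \eqref{conditionx}), ergodicity of the Wiener shift, and $\tau_n \to \infty$ a.s., all of which you correctly flagged.
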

		\begin{proof}
			For each $j$ denote by $N(\gamma,\bx, [a,b))$ the number of stopping times $\tau_k =\tau_k(\gamma,\bx,[0,\infty))$ in $[a, b)$. Since the minimal stopping time $\tau$ in $I$ is bigger than or equal $\min{I} = \tau^*_0(\gamma,\bx,I)$ and the maximal stopping time $\tau$ in $I$ is less than or equal $\max{I} = \tau^*_{N^*(\gamma,\bx,I)}(\gamma,\bx,I)$, it follows that
			\begin{equation}\label{Ncompare} 
				N^*(\gamma,\bx,I) \geq N(\gamma,\bx,[\min I, \max I)).
			\end{equation}
			
			Denote by $\lfloor{\tau_k}\rfloor$ the integer part of $\tau_k$, then $\tau_k < \lfloor{\tau_k}\rfloor +1$. As a result, the number of positive stopping times in the interval $[0,\tau_k]$ (which is $k$) is less than or equal to the one in the interval $[0,\lfloor{\tau_k}\rfloor +1)$. As a result,
			\[
			\frac{\tau_k}{k} \geq \frac{\lfloor{\tau_k}\rfloor}{N\left(\gamma,\bx,[0,\lfloor{\tau_k}\rfloor +1)\right)}.
			\]
			On the other hand, by definition of $N$ and $N^*$ and inequality \eqref{Ncompare},
			\begin{eqnarray*}
				N(\gamma,\bx,[0,\lfloor{\tau_k}\rfloor +1)) &\leq& \sum_{j=0}^{\lfloor{\tau_k}\rfloor } N(\gamma,\bx,[j,j+1)) \\
				&\leq& \sum_{j=0}^{\lfloor{\tau_k}\rfloor } N^*(\gamma,\bx,[j,j+1]) 
				= \sum_{j=0}^{\lfloor{\tau_k}\rfloor } N^*(\gamma,\bx(\theta_j \omega),[0,1]),
			\end{eqnarray*}
			where the last equality is due to \eqref{roughshift}. Hence,
			\begin{equation}\label{infinequality}
				\frac{\tau_k}{k}  \geq \frac{\lfloor{\tau_k}\rfloor}{\sum_{j=0}^{\lfloor{\tau_k}\rfloor } N^*(\gamma,\bx(\theta_j \omega),[0,1])} \geq \frac{\frac{\lfloor{\tau_k}\rfloor}{\lfloor{\tau_k}\rfloor +1}}{\frac{1}{\lfloor{\tau_k}\rfloor+1}\sum_{j=0}^{\lfloor{\tau_k}\rfloor } N^*(\gamma,\bx(\theta_j \omega),[0,1])}.
			\end{equation}
			By applying the Birkhoff ergodic theorem to the last right hand side of \eqref{infinequality}, where $\tau_k \to \infty$ as $k \to \infty$, the numerator tends to one while the denominator converges to $\E N^*(\gamma,\bx(\cdot),[0,1])$. This proves \eqref{infD}.
		\end{proof}	
		
		\begin{lemma}\label{infN}
			For every $a, b \in \R, a\leq b$, the following estimate holds 
			\begin{equation}\label{infN2a}
				\gamma  N^*(\gamma,\bx(\cdot),[a,b]) \geq  \ltn \bx(\cdot) \rtn_{\tp,[a,b]},\quad \forall \gamma >0.
			\end{equation} 
			As a consequence,
			\begin{equation}\label{infN2}
				\gamma \E N^*(\gamma,\bx(\cdot),[a,b]) \geq \E \ltn \bx(\cdot) \rtn_{\tp,[a,b]},\quad \forall \gamma >0.
			\end{equation}
		\end{lemma}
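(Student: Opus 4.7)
The plan is to exploit the defining property of the greedy sequence $\{\tau_i^*\}_{i \in \N}$ together with the standard subadditivity of the rough-path $p$-variation norm under splitting of an interval. Once the pathwise bound \eqref{infN2a} is established, \eqref{infN2} follows by monotonicity of the expectation.

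First I would record what the construction \eqref{greedytime*} forces on each block. For every $0 \leq i \leq N^*(\gamma,\bx,[a,b]) - 1$, one has $\ltn \bx \rtn_{\tp,[\tau_i^*,\tau_{i+1}^*]} \leq \gamma$: for inner blocks (those with $\tau_{i+1}^* < \max I$) this is saturated with equality by the defining infimum together with the continuity of the $p$-variation functional in its right endpoint, while on the terminal block (where $i = N^*-1$) it must still hold, since otherwise a further stopping time would be inserted strictly inside $[a,b]$, contradicting the value of $N^*$.

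Second I would invoke the interval-subadditivity $\ltn \bx \rtn_{\tp,[s,u]} \leq \ltn \bx \rtn_{\tp,[s,t]} + \ltn \bx \rtn_{\tp,[t,u]}$, valid for $s \leq t \leq u$. This is a standard Minkowski-type inequality applied to the $\ell^p$ sums defining $\|x\|_{\tp}$ and to the $\ell^{p/2}$ sums defining the $q$-variation of $\X$, together with $\sqrt{u+v} \leq \sqrt u + \sqrt v$ to pass to the square root at the second level. Chaining this subadditivity along the partition $a = \tau_0^* < \tau_1^* < \cdots < \tau_{N^*}^* = b$ of $[a,b]$ and inserting the block-wise bounds from Step 1 gives $\ltn \bx \rtn_{\tp,[a,b]} \leq \sum_{i=0}^{N^*-1} \ltn \bx \rtn_{\tp,[\tau_i^*,\tau_{i+1}^*]} \leq N^*(\gamma,\bx,[a,b])\,\gamma$, which is exactly \eqref{infN2a}. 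The estimate \eqref{infN2} is then immediate by taking expectations on both sides.

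There is no substantive obstacle in the argument; the only technical point is handling the cap $\wedge \max I$ in the definition of $\tau_{i+1}^*$ on the terminal block, which is precisely what the inequality (rather than equality) in Step 1 accommodates.
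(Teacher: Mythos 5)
Your Step 1 (that $\ltn \bx \rtn_{\tp,[\tau_{i}^*,\tau_{i+1}^*]} \leq \gamma$ on every block, with equality only for inner blocks) is correct and matches the paper's setup. The problem is in Step 2. The claimed subadditivity
\[
\ltn \bx \rtn_{\tp,[s,u]} \;\leq\; \ltn \bx \rtn_{\tp,[s,t]} + \ltn \bx \rtn_{\tp,[t,u]}
\]
is not ``a standard Minkowski-type inequality'' for a level-2 rough path, and your brief justification does not establish it. Minkowski does apply to the first level $\ltn x \rtn_{\tp}$: when a partition block $[s_k,s_{k+1}]$ of $[s,u]$ straddles $t$, one splits $x_{s_k,s_{k+1}} = x_{s_k,t} + x_{t,s_{k+1}}$ and the $\ell^p$ triangle inequality goes through. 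But for the second level, Chen's relation \eqref{chen} gives
\[
\X_{s_k,s_{k+1}} = \X_{s_k,t} + \X_{t,s_{k+1}} + x_{s_k,t}\otimes x_{t,s_{k+1}},
\]
so the straddling block carries an extra cross term $x_{s_k,t}\otimes x_{t,s_{k+1}}$ that the $\ell^q$-Minkowski inequality does not absorb. Your appeal to ``$\ell^{p/2}$ sums'' together with $\sqrt{u+v}\le\sqrt u + \sqrt v$ silently treats $\X$ as if its two-parameter increments were additive in the endpoint, which they are not. Tracking the cross term through your calculation leaves an extra contribution of order $\ltn x\rtn_{\tp,[s,t]}\ltn x\rtn_{\tp,[t,u]}$ that you would need to dominate by $2\ltn\X\rtn_{\tq,[s,t]^2}^{1/2}\ltn\X\rtn_{\tq,[t,u]^2}^{1/2}$; this bound holds for weakly geometric lifts (where $\mathrm{Sym}\,\X_{s,t}=\tfrac12 x_{s,t}\otimes x_{s,t}$ forces $\ltn\X\rtn^{1/2}\gtrsim\ltn x\rtn$) but is not assumed in $({\textbf H}_X)$, which also covers It\^o-type lifts.

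The paper avoids this entirely by invoking the Jensen/power-mean inequality from \cite[Lemma 2.1]{congduchong17}:
\[
\ltn \bx \rtn_{\tp,[a,b]}^p \;\leq\; \big[N^*(\gamma,\bx,[a,b])\big]^{p-1}\sum_{j=1}^{N^*}\ltn \bx \rtn_{\tp,[\tau_{j-1}^*,\tau_j^*]}^p,
\]
which requires no subadditivity at the level of the norm (only at the level of the $p$-th power control) and holds for any rough path. Combined with your Step 1 this gives $\ltn\bx\rtn_{\tp,[a,b]}^p\leq (N^*)^{p-1}\cdot N^*\gamma^p = (N^*\gamma)^p$, which is exactly \eqref{infN2a}. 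If you wish to pursue your route, you must either restrict to geometric lifts and explicitly control the Chen cross term, or replace Step 2 with the $N^{p-1}$-weighted inequality as the paper does.
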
	
		
		\begin{proof}
			Indeed, it follows from \cite[Lemma 2.1]{congduchong17} that
			\begin{eqnarray*}
				\ltn \bx \rtn^p_{\tp,[a,b]} &=& \ltn \bx \rtn^p_{\tp,[\tau^*_0,\tau^*_{N^*(\gamma,\bx,[a,b])}]}\\
				&\leq& \big[N^*(\gamma,\bx,[a,b])\big]^{p-1} \sum_{j = 1}^{N^*(\gamma,\bx,[a,b])}  \ltn \bx \rtn^p_{\tp,[\tau^*_{j-1},\tau^*_j]} \\
				&\leq& \big[N^*(\gamma,\bx,[a,b])\big]^{p-1} \big[N^*(\gamma,\bx,[a,b])\big] \gamma^p\\
				&\leq& \big[N^*(\gamma,\bx,[a,b])\big]^p \gamma^p
			\end{eqnarray*}
			or equivalently
			\[
			\gamma N^*(\gamma,\bx,[a,b]) \geq 	\ltn \bx \rtn_{\tp,[a,b]}.
			\]
			Taking the expectation to both sides of the above inequality, we obtain \eqref{infN2}.
		\end{proof}	
		
		\begin{lemma}\label{Nsumin}
			The following estimate holds for any $n\geq 1$ and any sequence $t_0 < t_1 < \ldots < t_n$
			\begin{equation}\label{Nsum}
				\sum_{j = 0}^{n-1} N^*(\gamma,\bx,[t_j,t_{j+1}]) \leq N^*(\gamma,\bx,[t_0,t_{n}]) + n.			
			\end{equation}
		\end{lemma}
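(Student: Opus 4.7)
The plan is to prove \eqref{Nsum} by constructing an injection from the ``full'' $\gamma$-blocks of the local greedy decompositions of the sub-intervals $[t_j, t_{j+1}]$ into the global greedy stopping times on $[t_0, t_n]$. Set $M := N^*(\gamma,\bx,[t_0,t_n])$ and let $\sigma_0 = t_0 < \sigma_1 < \cdots < \sigma_M = t_n$ be the associated stopping times from \eqref{greedytime*}, so that $\ltn \bx\rtn_{\tp,[\sigma_{k-1},\sigma_k]} = \gamma$ for $k < M$. For each $j$ write $N_j := N^*(\gamma,\bx,[t_j,t_{j+1}])$ and let $\tau^{*,j}_0 = t_j < \tau^{*,j}_1 < \cdots < \tau^{*,j}_{N_j} = t_{j+1}$ be the corresponding sub-stopping times; by construction one has $\ltn \bx\rtn_{\tp,[\tau^{*,j}_{i-1},\tau^{*,j}_i]} = \gamma$ for every $i \in \{1,\ldots,N_j - 1\}$, while only $\ltn \bx\rtn_{\tp,[\tau^{*,j}_{N_j-1},t_{j+1}]} \leq \gamma$ for the last piece. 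Call the former \emph{full blocks}; there are exactly $N_j - 1$ of them in $[t_j, t_{j+1}]$.

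To each full block $[\tau^{*,j}_{i-1},\tau^{*,j}_i]$ I assign the global stopping time $\Phi(j,i) := \sigma_{k+1}$, where $k$ is the largest index with $\sigma_k \leq \tau^{*,j}_{i-1}$ (well-defined since $\sigma_0 = t_0 \leq \tau^{*,j}_{i-1}$). The monotonicity of the rough-path $p$-variation in the right endpoint combined with the full-block identity gives
\begin{equation*}
\ltn \bx\rtn_{\tp,[\sigma_k,\tau^{*,j}_i]} \;\geq\; \ltn \bx\rtn_{\tp,[\tau^{*,j}_{i-1},\tau^{*,j}_i]} \;=\; \gamma,
\end{equation*}
so the greedy rule defining $\sigma_{k+1}$ forces $\sigma_{k+1} \leq \tau^{*,j}_i$; together with the maximality of $\sigma_k$, this places $\sigma_{k+1}$ inside the half-open interval $(\tau^{*,j}_{i-1},\tau^{*,j}_i]$.

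Injectivity of $\Phi$ follows because these half-open intervals, indexed over all full blocks $(j,i)$, are pairwise disjoint: for equal $j$ they are consecutive segments of the same partition of $[t_j, t_{j+1}]$, while for distinct $j$ they sit in non-overlapping sub-intervals of $[t_0, t_n]$ separated by the outer node $t_{j'}$. Moreover $\Phi(j,i) \leq \tau^{*,j}_i \leq \tau^{*,j}_{N_j - 1} < t_{j+1} \leq t_n$ and $\Phi(j,i) > t_0$, so the image lies in $\{\sigma_1,\ldots,\sigma_{M-1}\}$. Counting cardinalities yields
\begin{equation*}
\sum_{j=0}^{n-1}(N_j - 1) \;\leq\; M - 1,
\end{equation*}
and adding $n$ to both sides delivers \eqref{Nsum}. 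The only point requiring care is the simultaneous use of $p$-variation monotonicity and the greedy definition to confirm that $\sigma_{k+1}$ indeed falls in the required half-open interval $(\tau^{*,j}_{i-1},\tau^{*,j}_i]$; disjointness and the final count are then straightforward combinatorial bookkeeping.
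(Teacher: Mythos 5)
Your proof is correct and follows the same underlying idea as the paper's (terse) argument: each full $\gamma$-block of the local greedy partition of $[t_j,t_{j+1}]$ traps at least one global stopping time in its half-open interior, the full blocks across all $j$ are pairwise disjoint, and counting then gives $\sum_j (N_j-1)\le N^*(\gamma,\bx,[t_0,t_n])-1$, which is even slightly sharper than \eqref{Nsum}. You have simply made the paper's combinatorial bookkeeping explicit by constructing the injection $\Phi$ and checking its range lies in $\{\sigma_1,\dots,\sigma_{M-1}\}$.
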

		\begin{proof}
			By definition, on $[t_j,t_{j+1}]$ there are at least $N^*(\gamma,\bx,[t_j,t_{j+1}])-1$ consecutive stopping time intervals on which the $p$-variation norm of $\bx$ is $\gamma$, each of which contains at least one stopping time in the sequence constructed on $[t_0,t_n]$. As a result, on the interval $[t_0,t_n]$ there are at least $\sum_{j = 0}^{n-1} \Big(N^*(\gamma,\bx,[t_j,t_{j+1}]) -1\Big)$  disjoint stopping time intervals on which the $p$-variation norm of $\bx$ is $\gamma$. This proves \eqref{Nsum}. \\	
		\end{proof} 		
        {\bf Stopping times for a control sequence.}
		 Note that one can replace $\ltn \bx\rtn_{\tp}$ in \eqref{greedytime}, \eqref{greedytime*} by a finite sequence of controls $\bw_{\cdot,\cdot}\in \mathcal{S}$ associated with parameters $\beta_\bw \in (0,1]$, to construct the sequences $\tau_k(\gamma,\cS,[0,\infty))$ on $[0,\infty)$ and $ \tau^{*}_k(\gamma,\cS,I)$ on a given closed interval $I\subset[0,\infty)$ in a similar manner. Denote by $N^*(\gamma,\mathcal{S} ,I)$ the number of $ \tau^{*}_k(\gamma,\cS,I)$ on $I$. Similar to \eqref{Nest} we obtain (see for instance \cite[Lemma 2.6]{congduchong17})
		\begin{equation}\label{N*}
			N^*(\gamma,\mathcal{S},I) < 1 + \frac{1}{\gamma^{\frac{1}{\beta}}} |S|^{\frac{1}{\beta}-1} \sum_{\bw\in \mathcal{S}}(\bw_{\min I,\max I})^{\frac{\beta_\bw}{\beta}}
		\end{equation}
    in which $\beta= \min_{\bw\in \mathcal{S}}{\beta_\bw}$.
		Furthermore, similar to Lemma \ref{infimumD}, we have
		\begin{equation}\label{infDS}
			\liminf \limits_{n \to \infty} \frac{\tau_n(\gamma,\cS,[0,\infty))}{n} \geq \frac{1}{\E N^*(\gamma,\cS ,[0,1])}
		\end{equation}
		almost surely.
        
		\subsection{Stopping times for discrete time sets}\label{dis_greedy}
		For investigation of discrete time systems arising from Euler scheme applied to the rough equation \eqref{RDE1} we modify the stopping time technique to the discrete framework. In the discrete time setting we will use more complicated control than the control function $\ltn x \rtn^p_{\tp,[s,t]}$ used in the continuous-time setting above. More precisely, 
		given a finite sequence of controls $\bw_{\cdot,\cdot}\in \mathcal{S}$ associated with parameters $\beta_\bw \in (0,1]$, we would like to construct a version of greedy sequence of (discrete-time) stopping times similar to that in \cite{congduchong23}.
			
	For given $\Delta>0$, consider the sequence  $t_k =k\Delta, \ k\in \N$. Given a fixed $\gamma >0$, assign the starting time $\tau^\Delta_0 (\gamma,\cS,[0,\infty))= 0$. For each $n\in \N$, assume $\tau^\Delta_n(\gamma,\cS,[0,\infty)) =t_k$ is determined. Then one can define $\tau^\Delta_{n+1}(\gamma,\cS)$ by the following rule:
		\begin{itemize}
			\item if $\displaystyle\sum_{\bw \in \mathcal{S}} \bw^{\beta_\bw}_{t_k,t_{k+1}} > \gamma$ then set $\tau^\Delta_{n+1}(\gamma,\cS,[0,\infty)) := t_{k+1}$;
			\item else set $\tau^\Delta_{n+1}(\gamma,\cS,[0,\infty)):= \sup \{t_l >t_k: \sum_{\bw \in \mathcal{S}} \bw^{\beta_\bw}_{t_k,t_l} \leq \gamma \}$. 	
		\end{itemize}

        The following result show the relation between discrete and continuous stopping times.
		\begin{lemma}
        For given $\gamma>0$ and set of controls $\cS$,
			\begin{equation}
				\liminf \limits_{n \to \infty} \frac{\tau^\Delta_n(\gamma,\cS,[0,\infty))}{n} \geq \liminf \limits_{n \to \infty} \frac{1}{2}\frac{\tau_n(\gamma,\cS,[0,\infty))}{n}. \label{disinfD}
			\end{equation}
		\end{lemma}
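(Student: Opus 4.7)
The plan is a direct counting comparison between the two greedy sequences. Introduce the shorthand $w(s,t) := \sum_{\bw \in \cS} \bw_{s,t}^{\beta_\bw}$, in terms of which both $\{\tau_n\}$ and $\{\tau^\Delta_n\}$ are constructed with the same threshold $\gamma$. Since each $\bw$ is a control it is nondecreasing with respect to interval inclusion, and $x \mapsto x^{\beta_\bw}$ is nondecreasing on $[0,\infty)$, so $w$ is monotone: $w(s,t) \leq w(s',t')$ whenever $[s,t] \subseteq [s',t']$. From the continuous definition \eqref{greedytime} (adapted to the controls in $\cS$) one has $w(\tau_k, \tau_{k+1}) = \gamma$ on every complete continuous stopping interval.

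The crucial combinatorial step is to show that for every $k \in \N$, the half-open interval $(\tau_k, \tau_{k+1}]$ contains at most two of the discrete stopping times. Suppose three of them, $\tau^\Delta_j < \tau^\Delta_{j+1} < \tau^\Delta_{j+2}$, all lie in this interval. Inspecting the discrete rule describing how $\tau^\Delta_{j+1}$ is built from $\tau^\Delta_j$: either $\tau^\Delta_{j+1} = \tau^\Delta_j + \Delta$ with $w(\tau^\Delta_j, \tau^\Delta_{j+1}) > \gamma$ (single-step branch), or $w(\tau^\Delta_j, \tau^\Delta_{j+1} + \Delta) > \gamma$ (multi-step branch, since otherwise the supremum defining $\tau^\Delta_{j+1}$ could be taken larger). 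Because $\tau^\Delta_{j+2}$ and $\tau^\Delta_{j+1} + \Delta$ are both grid points strictly after $\tau^\Delta_{j+1}$, we have $\tau^\Delta_{j+2} \geq \tau^\Delta_{j+1} + \Delta$, so monotonicity of $w$ yields $w(\tau^\Delta_j, \tau^\Delta_{j+2}) > \gamma$ in both branches. But $[\tau^\Delta_j, \tau^\Delta_{j+2}] \subseteq [\tau_k, \tau_{k+1}]$, so monotonicity also forces $w(\tau^\Delta_j, \tau^\Delta_{j+2}) \leq w(\tau_k, \tau_{k+1}) = \gamma$, a contradiction.

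Summing over $k = 0, 1, \ldots, n-1$, the number of discrete stopping times lying in $(0, \tau_n]$ is at most $2n$. Equivalently, for every $m \geq 1$ one has $\tau^\Delta_m > \tau_{\lfloor (m-1)/2 \rfloor}$. Dividing by $m$ and using that $\lfloor (m-1)/2 \rfloor/m \to 1/2$ as $m \to \infty$,
\[
\liminf_{m \to \infty}\frac{\tau^\Delta_m}{m} \;\geq\; \liminf_{m \to \infty}\frac{\tau_{\lfloor (m-1)/2 \rfloor}}{\lfloor (m-1)/2 \rfloor}\cdot\frac{\lfloor (m-1)/2 \rfloor}{m} \;=\; \frac{1}{2}\,\liminf_{n \to \infty}\frac{\tau_n}{n},
\]
which is precisely \eqref{disinfD}.

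The main obstacle is handling cleanly the single-step branch of the discrete rule, in which a single grid step already carries $w$-mass greater than $\gamma$: this branch has no continuous-time analogue and a priori threatens the combinatorial claim. Resolving it relies on the fact that such an excess-mass single step cannot lie strictly inside any continuous stopping interval $[\tau_k, \tau_{k+1}]$ of total $w$-mass exactly $\gamma$, which is again nothing but the monotonicity of the control $w$. It is this observation that closes the counting argument and produces the factor $1/2$ in the conclusion.
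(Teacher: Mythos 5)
Your argument is correct and takes the paper's approach: the combinatorial observation that at most two discrete stopping times can fall between two consecutive continuous ones directly yields the factor $\tfrac{1}{2}$ in the liminf. The paper merely asserts this observation without proof; your monotonicity argument for $w(s,t)=\sum_{\bw\in\cS}\bw_{s,t}^{\beta_\bw}$, combined with the case analysis of the single-step and multi-step branches of the discrete rule, supplies the missing justification.
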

		\begin{proof}
			Observe that, between two consecutive stopping times $\tau_i(\gamma,\cS,[0,\infty)),\tau_{i+1}(\gamma,\cS,[0,\infty))$ there are at most two stopping times $\tau^\Delta_n(\gamma,\cS,[0,\infty))$. As a result, 	
				 \[
             \tau^\Delta_{2m}(\gamma,\cS,[0,\infty)), \tau^\Delta_{2m+1} (\gamma,\cS,[0,\infty))\geq \tau_m(\gamma,\cS,[0,\infty)). 
             \]
             Therefore,
			\begin{equation*}
				\liminf \limits_{n \to \infty} \frac{\tau^\Delta_n(\gamma,\cS,[0,\infty))}{n} \geq \liminf \limits_{n \to \infty} \frac{1}{2}\frac{\tau_n(\gamma,\cS,[0,\infty))}{n}. 
			\end{equation*}
		\end{proof}

Similar to the continuous case, we construct sequence $\hat{\tau}_n:=\tau^\Delta_n(\gamma, \mathcal{S}, I)$ on a given closed interval $I$. The number of $\tau^\Delta_n(\gamma, \mathcal{S}, I)$ denoted by $\hat{N}=\hat{N}(\gamma, \mathcal{S}, I)$ can be estimated by (see Subsection 2.4 in \cite{congduchong23})
     \begin{equation}\label{N_hat}
		\hat{N} <  2 + \frac{2}{\gamma^{\frac{1}{\beta}}} |S|^{\frac{1}{\beta}-1} \sum_{\bw\in \mathcal{S}}(\bw_{\min I,\max I})^{\frac{\beta_\bw}{\beta}}
		\end{equation}
    in which $\beta= \min_{\bw\in \mathcal{S}}{\beta_\bw}$.
             
We emphasize here that Lemma \ref{infimumD} holds only almost surely w.r.t. $\omega \in \Omega$ under the assumption that the Wiener-shift $\theta$ in Subsection \ref{probset} is ergodic. From now on, we will only work with a realization $x \in C^\nu$ of the stochastic process $X_t$ satisfying assumption (${\textbf H}_X$), such that $x$ can be lifted into a rough path $\bx$. 
		For a little abuse of notation, we only mention the dependence of $\bx$ in the proof, without addressing that $\bx = \bx(\omega) = (x(\omega),\X(\omega))$ for almost surely $\omega \in \Omega$.
	
		\section{Lyapunov stability of rough differential equations}\label{contRDE}
Given the probabilistic setting in Appendix \ref{probset}, the RDE \eqref{RDE1} generates a random dynamical system $\varphi$ over the Wiener space $(\Omega,{\cal F},{\mathbb P})$ and the Wiener-type shift $\theta_t$, thus we may use the tools and results in the theory of random dynamical systems to get the similar ones for the RDE \eqref{RDE1}. For a (possibly random) point $y^*_0\in \R^d$ the solution of \eqref{RDE1} starting from $y^*_0$ can be represented as $\varphi(t,\omega)y^*_0$. For the details on RDS setting for \eqref{RDE1} see \cite{BRSch17} and \cite{duc21}. 

In \cite{duc21} we prove that there exists a global random pullback attractor for the RDS \eqref{RDE1} provided the drift $f$ is globally dissipative. 
If in addition the drift is globally strictly dissipative, then the random attractor is a singleton. In this section we would like to generalize these results to a local situation and for $C_g$, which is then automatically satisfied in the global picture. First we need to recall the concept of dissipativity from the global and local sense.

\begin{definition}[Dissipativity]
A function $f$ is called:
\begin{itemize}
    \item globally dissipative, if there exist constants $D_1,D_2 >0$ such that
    \begin{equation}\label{globaldiss}
        \langle y, f(y) \rangle \leq  D_1- D_2\|y\|^2,\quad \forall y \in \R^d;
    \end{equation}
   \item globally strictly dissipative, if there exists a constant $D >0$ such that
    \begin{equation}\label{strictdissipative}
			\langle y_1 - y_2, f(y_1) - f(y_2) \rangle \leq  - D \|y_1-y_2\|^2,\quad \forall y_1, y_2 \in \R^d;
		\end{equation}
    \item locally strictly dissipative at $\hat{y} \in \R^d$, if there exist a radius $r>0$ and a constant $D >0$ such that    
    \begin{equation}\label{localstrictdissipative}
			\langle y - \hat{y}, f(y) - f(\hat{y}) \rangle \leq  - D \|y-\hat{y}\|^2,\quad \forall \|y-\hat{y}\| <r.
		\end{equation}
\end{itemize}
\end{definition}

In this paper we are interested in the stability of a stationary solution of \eqref{RDE1} which stays inside our global random attractor, thus we give here the definition of stationary solution.

\begin{definition}[Stationary solution]
A random point
$\ta(\cdot):\Omega \to \R^d$ is called a {\em stationary solution} of \eqref{RDE1} if $\ta(\theta_t\omega), t\in\R_+,$ is the solution of \eqref{RDE1} starting from $\ta(\omega)\in\R^d$ at time 0, i.e. $\varphi(t,\omega)\ta(\omega) = \ta(\theta_t \omega)$, for all $t\in \R_+, \omega \in \Omega$.
\end{definition}

 Since in this section we investigate the stability of nontrivial solutions it is natural to impose global smooth and growth conditions (${\textbf H}_f$), (${\textbf H}_g$) on the functions $f,g$. We may of course impose smooth and growth conditions on the functions $f$ and $g$ only in a neighborhood of the stationary solution $\ta$ as well. But notice that $\ta$ is a random point hence its neighborhood is a random open set whereas the functions $f, g$ are defined as nonrandom functions. In the special case that $\ta(\omega)$ is essentially bounded, $\sup\|\ta(\omega)\|<R$ for some non-random $R>0$, we may then impose smooth and growth condition on $f$ and $g$ only on the ball of radius $R$ in $\R^d$.
 
We give the definition  on (Lyapunov) asymptotic/exponential stability of a solution (cf. \cite{chicone} and \cite{perko} for the ODE case, and \cite[Definition 8]{GABSch18} for the rough differential equation case).
\begin{definition}\label{Defstability}
			 (A) Stability: a solution $y^*$ of \eqref{RDE1} is called  (Lyapunov) {\em stable} if for any $\varepsilon >0$ there exists a positive random variable $r =r(\omega)>0$ such that for any initial value $y_0\in \R^d$ satisfying $\|y_0-y^*_0\| < r(\omega)$ the solution $y_t$ of  \eqref{RDE1} starting from $y_0$ exists on the whole half line $t\in [0,\infty)$ and  the following inequality holds 
			\[
			\sup_{t\geq 0}\|y_t-y^*_t\| < \varepsilon.
			\]
			(B) Attractivity:  a solution $y^*$ is called attractive, if  there exists a positive random variable $r(\omega) >0$ such that any solution $y_t$ of  \eqref{RDE1} with  $\|y_0-y^*_0\| < r(\omega)$ exists on the whole half line $t\in [0,\infty)$ and satisfies
			\begin{equation}\label{attractivity} 
				\lim \limits_{t \to \infty} \|y_t-y^*_t\| = 0.   
			\end{equation}
			(C) Asymptotic stability: a  solution $y^*$ of equation \eqref{RDE1} is called asymptotically stable, if it is stable and attractive.\\
			(D) Exponential stability: The stationary solution $y^*$ of equation \eqref{RDE1} is called exponentially  stable, if it is stable and there exist two positive random variables $r(\omega)>0$ and $\alpha(\omega)>0$ and a positive constant $\mu>0$  such that for any initial value $y_0$ satisfying $\|y_0-y^*_0\| < r(\omega)$ the solution of equation \eqref{RDE1} starting from $y_0$ exists on the whole half line $0\leq t<\infty$ and the following inequality
			\[
			\|y_t-y^*_t\| \leq \alpha(\omega) e^{-\mu t}
			\]
			holds for all $t\geq 0$.
		\end{definition}
		
		It is easily seen that, like the case of ordinary differential equations, exponential stability implies asymptotic stability, and the asymptotic stability implies stability; but the inverse direction is not true.

Because stability is a local property, we would like to find a  local condition for the Lyapunov exponential stability of an arbitrary stationary solution of the RDE \eqref{RDE1}. A closer look at \eqref{strictdissipative} motivates the following definition on the so-called {\it locally one-sided Lipschitz constant}: assign for each fixed $\hat{y} \in \R^d$ the quantities
\begin{eqnarray}\label{dfy}
    \ell(f,\hat{y},r) &:=& \sup \limits_{0<\|y-\hat{y}\| < r} \langle \frac{y-\hat{y}}{\|y-\hat{y}\|},\frac{f(y)-f(\hat{y})}{\|y-\hat{y}\|}\rangle; \notag\\
    \ell(f,\hat{y}) &:=& \lim \limits_{r \to 0} \ell(f,\hat{y},r).
\end{eqnarray}
Notice that $\ell(f,\hat{y},r)$ is well-defined and finite due to the locally Lipschitz continuity of $f$ at $\hat{y}$. In addition, $\ell(f,\hat{y},r)$ is a non-decreasing function of $r$, thus the limit in the definition of $\ell(f,\hat{y})$ exists, i.e. $\ell(f,\hat{y})$ is well-defined and finite for every $\hat{y} \in \R^d$. In fact we can derive an explicit computation for $\ell(f,\hat{y})$ as follows.
\begin{proposition}\label{propl}
 Under the assumption $f \in C^1$,
    \begin{equation}\label{dfyhat}
        \ell(f,\hat{y}) = \sup \limits_{\|h\| =1} \langle h, Df(\hat{y})h \rangle,\quad \forall \hat{y} \in \R^d.
    \end{equation}
    As a result, $\ell(f,\hat{y})$ is continuous at $\hat{y}$.
\end{proposition}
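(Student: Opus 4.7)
The plan is to establish \eqref{dfyhat} by proving two inequalities, exploiting the fundamental theorem of calculus for $f \in C^1$, and then to deduce continuity as an essentially free byproduct. Write $M(\hat y) := \sup_{\|h\|=1} \langle h, Df(\hat y) h \rangle$, so the goal is $\ell(f,\hat y) = M(\hat y)$.

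For the upper bound $\ell(f,\hat y) \leq M(\hat y)$, I would use the integral representation
\[
f(y) - f(\hat y) = \int_0^1 Df\bigl(\hat y + s(y-\hat y)\bigr)(y-\hat y)\, ds.
\]
Dividing by $\|y - \hat y\|$ and setting $h = (y - \hat y)/\|y-\hat y\|$ turns the relevant inner product into $\int_0^1 \langle h, Df(\hat y + s(y-\hat y)) h \rangle\, ds$. Since $Df$ is continuous, for every $\varepsilon>0$ one can pick $r>0$ small enough that $\|Df(\hat y + s(y-\hat y)) - Df(\hat y)\| < \varepsilon$ uniformly in $s \in [0,1]$ and $\|y - \hat y\| < r$, yielding a pointwise bound $\langle h, Df(\hat y + s(y-\hat y)) h \rangle \leq M(\hat y) + \varepsilon$. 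Taking the supremum over $y$, then $r \to 0$ and $\varepsilon \to 0$, gives $\ell(f,\hat y) \leq M(\hat y)$.

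For the reverse direction, I would use compactness of the unit sphere $S^{d-1}$ to pick $h^\star$ attaining $M(\hat y)$ and then test the definition of $\ell(f,\hat y, r)$ along the ray $y = \hat y + t h^\star$ with $t \in (0,r)$. The same integral formula gives
\[
\left\langle h^\star, \frac{f(\hat y + t h^\star) - f(\hat y)}{t}\right\rangle = \int_0^1 \langle h^\star, Df(\hat y + s t h^\star) h^\star \rangle\, ds \xrightarrow[t \to 0^+]{} \langle h^\star, Df(\hat y) h^\star \rangle = M(\hat y),
\]
so for every $\varepsilon >0$ and all sufficiently small $r>0$ one has $\ell(f,\hat y, r) \geq M(\hat y) - \varepsilon$; letting $r\to 0$ and $\varepsilon\to 0$ yields $\ell(f,\hat y) \geq M(\hat y)$.

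Finally, continuity of $\hat y \mapsto \ell(f,\hat y) = M(\hat y)$ follows from a standard sup argument: for any $\hat y_1, \hat y_2$ and any unit $h$,
\[
\bigl|\langle h, Df(\hat y_1) h \rangle - \langle h, Df(\hat y_2) h \rangle\bigr| \leq \|Df(\hat y_1) - Df(\hat y_2)\|,
\]
which gives $|M(\hat y_1) - M(\hat y_2)| \leq \|Df(\hat y_1) - Df(\hat y_2)\|$ after taking suprema on both sides, and continuity of $Df$ closes the argument. The only genuinely subtle point is ensuring the uniformity in $h$ in the upper-bound step, but this is immediate from the fact that the bound on $\|Df(\hat y + s(y-\hat y)) - Df(\hat y)\|$ is independent of $h$; so I do not anticipate any real obstacle here.
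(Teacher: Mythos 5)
Your proof is correct and follows essentially the same route as the paper: both rest on the fundamental-theorem-of-calculus representation of $f(y)-f(\hat y)$ combined with continuity of $Df$, with the paper compressing your two inequalities into the single oscillation bound $|\ell(f,\hat y,r)-M(\hat y)|\le \mathrm{osc}(Df(\hat y))_r$. The only cosmetic difference is in the continuity argument, where you use the sup-Lipschitz estimate $|M(\hat y_1)-M(\hat y_2)|\le\|Df(\hat y_1)-Df(\hat y_2)\|$ while the paper invokes continuity of the top eigenvalue of the symmetric part of $Df(\hat y)$; both are valid and elementary.
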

\begin{proof}
    The proof follows directly from the fact that for $0<\|y-\hat{y}\|< r$,
    \begin{eqnarray}\label{Dfdiff}
       &&\Big| \langle \frac{y-\hat{y}}{\|y-\hat{y}\|},\frac{f(y)-f(\hat{y})}{\|y-\hat{y}\|}\rangle - \langle \frac{y-\hat{y}}{\|y-\hat{y}\|}, Df(\hat{y})\frac{(y-\hat{y})}{\|y-\hat{y}\|} \rangle \Big| \notag\\
       &\leq& \int_0^1 \|Df(\hat{y}+ \mu(y-\hat{y})) -Df(\hat{y})\|d\mu \notag\\
       &\leq& \sup \limits_{\|h\|\leq r} \|Df(\hat{y}+h)-Df(\hat{y})\| =:osc(Df(\hat{y}))_r,
    \end{eqnarray}
    where it can be proved that $osc(Df(\hat{y}))_r$ in the right hand side of \eqref{Dfdiff} is a continuous function of $\hat{y}$ and $r$, and that $osc(Df(\hat{y}))_r$ tends to zero as $r \to 0$. As a result,
    \begin{equation}\label{Dfdiff2}
    |\ell(f,\hat{y},r) - \sup \limits_{\|h\| =1} \langle h, Df(\hat{y})h \rangle| \leq osc(Df(\hat{y}))_r.   
    \end{equation}
    Moreover,
    \[
    \sup \limits_{\|h\| =1} \langle h, Df(\hat{y})h \rangle = \sup \limits_{\|h\| =1} \langle h, \frac{1}{2}\Big[Df(\hat{y})+Df(\hat{y})^{\rm{T}}\Big] h \rangle
    \]
    which is the maximal eigenvalue of $\frac{1}{2}\Big[Df(\hat{y})+Df(\hat{y})^{\rm{T}}\Big]$. This prove the continuity of $\ell(f,\hat{y})$ at $\hat{y}$.
\end{proof}

A direct consequence of \eqref{Dfdiff2} is that for any $\|y-\hat{y}\| < r$,
\begin{equation}\label{Mf}
\ell(f,\hat{y},r)  \leq \sup \limits_{\|h\| =1} \langle h, Df(\hat{y})h \rangle + osc(Df(\hat{y}))_r =: M(Df,\hat{y},r)
\end{equation}
so that
\begin{equation}\label{fdiff}
    \langle y - \hat{y}, f(y)-f(\hat{y}) \rangle \leq M(Df,\hat{y},r) \|y-\hat{y}\|^2,\quad \forall \|y-\hat{y}\| < r. 
\end{equation}
Since $osc(Df(\hat{y}))_r$ is continuous in $\hat{y}$ and $r$, so is $M(Df,\hat{y},r)$. Moreover, the definition of $M$ in \eqref{Mf} yields
\begin{equation}\label{Mdf}
   | M(Df,\hat{y},r) |\leq 3 \|Df\|_{\infty,B(y,r)}.
\end{equation}
Because of \eqref{localstrictdissipative}, we are interested in the points $\hat{y}$ that makes $M(Df,\hat{y},r)$ negative for certain $r>0$, i.e. $f$ is locally strictly dissipative at $\hat{y}$. However for a random variable $\ta(\omega)$, it might happen that $M(Df,\ta(\omega),r) >0$ for a positive probability.  

Motivated by Proposition \ref{propl}, we shall assume an additional condition for $f$ that\\

(${\textbf H}^+_{f}$) $f\in C^1$ and there exist constants $C_{f}>0,\rho \geq 1$ such that
    \begin{equation}\label{f_power_rate}
   \|f(y)\|, \|Df(y)\| \leq C_{f}(1+\|y\|^\rho), \quad \forall y \in \R^d.
    \end{equation}
Our first main result of this paper derives a local stability criterion for a stationary solution $\ta(\omega)$. First we introduce the function
      \begin{equation}\label{funkappa}
	\kappa(\lambda,y,r):=M(Df,y,r) +256\lambda\|f(y)\| +64 \lambda\|Df\|_{\infty,B(y,r)},
    \end{equation}
for all $y\in\R^d$ and $\lambda, r \in \R_+$. 
	Since $f \in C^1$, it is easy to check that $\kappa$ is continuous in $y, \lambda,r$ with
    \[
      \kappa(\lambda,y,0) = \ell (f,y)+  256\lambda\|f(y)\| +  64 \lambda\|Df(y)\|,\quad \forall y\in\R^d, \lambda \in \R_+.
    \]     
\begin{theorem} \label{local_diss_attractor}
	Assume (${\textbf H}_f$), (${\textbf H}^+_f$), (${\textbf H}_g$),(${\textbf H}_X$). Assume further that $\ta(\omega)$ is a stationary solution of the RDE \eqref{RDE1} with $\|\ta(\cdot)\|^\rho \in L^1$, where $\rho$ is defined in \eqref{f_power_rate}, and there exists a $\lambda \in (0,\frac{1}{8})$ such that  
    \begin{equation}\label{Anegdef}
      -\E \Big[ \ell(f,\ta) + 256\lambda\|f(\ta)\| + 64 \lambda \|Df(\ta)\|  \Big]  >4\lambda \E N^*(\frac{\lambda}{C_pC_g},\bx(\cdot),[0,1]).
    \end{equation} 
Then the solution $\ta(\theta_t\omega)$, for $t \in \R_+$, of \eqref{RDE1} is almost surely exponentially stable.
\end{theorem}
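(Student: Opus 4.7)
The plan is to apply the Doss--Sussmann transformation on a greedy sequence of stopping-time intervals, derive a first-order differential inequality for the transformed difference $\bar z - z$, and invoke the continuous-time Birkhoff ergodic theorem (via Lemma \ref{infimumD}) to turn the spectral hypothesis \eqref{Anegdef} into an almost-sure negative Lyapunov exponent. A bootstrap exit-time argument then legitimises the confinement of the solution inside the neighbourhood where the local dissipativity \eqref{fdiff} applies.

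\textbf{Setup and single-interval estimate.} Fix $\lambda\in(0,1/8)$ from the hypothesis. Since $\kappa(\lambda,y,r)$ is continuous and nondecreasing in $r$ with $\kappa(\lambda,y,r)\downarrow\kappa(\lambda,y,0)$ as $r\downarrow 0$, and $\kappa(\lambda,\ta,r)$ is integrable for small $r>0$ by \eqref{Mdf}, \eqref{f_power_rate} and $\|\ta\|^\rho\in L^1$, monotone convergence lets us pick $r_0>0$ with
\[
\E\kappa(\lambda,\ta,r_0)+4\lambda\,\E N^*\bigl(\lambda/(C_pC_g),\bx(\cdot),[0,1]\bigr)<0.
\]
Set $\gamma:=\lambda/(C_pC_g)$ and build the greedy stopping times $\{\tau_k\}$ of \eqref{greedytime}, so that the smallness condition of Propositions \ref{solest} and \ref{solestdiff} holds on every $[\tau_k,\tau_{k+1}]$. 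On each such interval, write $y_t=\phi_t(\bx,z_t)$ and $\bar y_t:=\ta(\theta_t\omega)=\phi_t(\bx,\bar z_t)$ with $z_{\tau_k}=y_{\tau_k}$, $\bar z_{\tau_k}=\bar y_{\tau_k}$. From the ODE \eqref{ascoODE}, taking the inner product with $\bar z-z$, decomposing $\bar z-z=(\bar y-y)-(\bar\eta-\eta)$, and combining $\|\bar\eta-\eta\|\leq 4\lambda\|\bar z-z\|$ and $\|\bar\psi-\psi\|\leq 256\lambda\|\bar z-z\|$ from Proposition \ref{solestdiff} with $\|\psi\|\leq 4\lambda$ from Proposition \ref{solest} and the local dissipativity \eqref{fdiff} applied at $\hat y=\bar y_t$ (valid while $\|\bar y_t-y_t\|<r_0$), routine algebra yields
\[
\frac{d}{dt}\|\bar z_t-z_t\|\leq\kappa(\lambda,\bar y_t,r_0)\,\|\bar z_t-z_t\|\qquad\text{on }[\tau_k,\tau_{k+1}].
\]

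\textbf{Iteration and ergodic decay.} Using the end-of-interval bound $\|\bar y_{\tau_{k+1}}-y_{\tau_{k+1}}\|\leq(1+4\lambda)\|\bar z_{\tau_{k+1}}-z_{\tau_{k+1}}\|$, the start-of-interval reset $\|\bar z_{\tau_k}-z_{\tau_k}\|=\|\bar y_{\tau_k}-y_{\tau_k}\|$, and $\log(1+4\lambda)\leq 4\lambda$, Gronwall iterated across $k=0,\dots,N-1$ gives
\[
\|\bar y_{\tau_N}-y_{\tau_N}\|\leq\|\bar y_0-y_0\|\exp\Bigl(4\lambda N+\int_0^{\tau_N}\kappa(\lambda,\bar y_s,r_0)\,ds\Bigr).
\]
Because $\ta(\theta_\cdot\omega)$ is stationary under the ergodic Wiener shift, the continuous-time Birkhoff theorem gives $\tau_N^{-1}\int_0^{\tau_N}\kappa(\lambda,\bar y_s,r_0)\,ds\to\E\kappa(\lambda,\ta,r_0)$ a.s., while Lemma \ref{infimumD} gives $\limsup N/\tau_N\leq\E N^*(\gamma,\bx,[0,1])$ a.s. Combined with the choice of $r_0$, the exponent divided by $\tau_N$ is asymptotically $\leq-\mu$ for some deterministic $\mu>0$, yielding exponential decay along $\tau_N\to\infty$. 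The $p$-variation control in Proposition \ref{solest} transfers the decay to continuous $t$.

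\textbf{Bootstrap and main obstacle.} The preceding argument rests on $\|\bar y_s-y_s\|<r_0$; fix $\epsilon\in(0,r_0)$ and set $\sigma(\omega):=\inf\{t\geq 0:\|\bar y_t-y_t\|\geq\epsilon\}$. The estimates hold on $[0,\sigma)$ and, combined with the ergodic limit, produce an a.s.\ finite random amplitude $R(\omega)$ with $\|\bar y_t-y_t\|\leq R(\omega)\|\bar y_0-y_0\|e^{-\mu t}$ on $[0,\sigma)$. Choosing $r(\omega):=\epsilon/R(\omega)$ forces $\sigma=\infty$ whenever $\|\bar y_0-y_0\|<r(\omega)$, delivering Lyapunov stability and exponential attractivity with rate $\mu$. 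The principal difficulty is precisely this bootstrap: one must simultaneously (i) keep $\|\bar y_t-y_t\|<r_0$ so that $\kappa$ in \eqref{fdiff} retains its correct sign, (ii) control the random pre-asymptotic phase on which the Birkhoff averages have not yet stabilised (producing the measurable $R(\omega)$), and (iii) pick $r(\omega)>0$ that absorbs the worst-case path-dependent growth during that phase. A further subtlety is that $\|Df\|_{\infty,B(\bar y_t,r_0)}$ is unbounded in general, so Birkhoff must be applied to the unbounded-but-integrable stationary process $\kappa(\lambda,\ta(\theta_\cdot\omega),r_0)$ rather than to a deterministic sup-norm bound.
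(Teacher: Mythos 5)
Your proposal is correct and follows essentially the same route as the paper's own proof: Doss--Sussmann on greedy stopping-time intervals, the differential inequality $\tfrac{d}{dt}\|\bar z_t-z_t\|\le\kappa(\lambda,\ta_t,r)\|\bar z_t-z_t\|$ derived from Propositions \ref{solest}--\ref{solestdiff} and \eqref{fdiff}, iteration across intervals with the $e^{4\lambda}$ reset factor, Birkhoff plus Lemma \ref{infimumD} to turn \eqref{Anegdef} into a negative exponent, and a bootstrap radius $R(\omega)>0$ keeping the trajectory inside the dissipativity ball. The only cosmetic difference is your use of monotone convergence (valid since $\kappa(\lambda,\ta,r)$ is monotone in $r$ and integrable) in place of the paper's dominated convergence to pick $r_0$.
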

		\begin{proof}
 First, observe that by definition of $\kappa$ given in \eqref{funkappa}, \eqref{Anegdef} means
\begin{equation}\label{Anegdef_new}
       -\E \kappa(\lambda,\ta(\cdot),0)  >4\lambda \E N^*(\frac{\lambda}{C_pC_g},\bx(\cdot),[0,1]).
    \end{equation} 
Write in short $\ta_t := \ta(\theta_t \omega) = \varphi(t,\omega)\ta(\omega)$ to be the stationary solution of \eqref{RDE1}. First observe that $osc(Df(\ta(\omega)))_r \to 0$ as $r \to 0$ almost surely, and that 
\[
osc(Df(\ta(\cdot)))_r \leq 2 \|Df(\ta(\cdot)+\cdot)\|_{\infty,B(0,r)} \leq 2^{1+\rho} C_{f}\Big(1+\|\ta(\cdot)\|^\rho +r^\rho\Big)\in L^1.
\]
 Thus by the dominated convergence theorem $\lim \limits_{r\to 0} \E \Big[osc(Df(\ta(\cdot)))_r\Big] =0$. As a result,  
 \[
 \lim \limits_{r\to 0} \E M(Df,\ta(\cdot),r) = \E \ell(f,\ta(\cdot)).
 \] 
 Similarly, it follows from \eqref{f_power_rate}, $\|\ta(\cdot)\|^\rho\in L^1$ and the dominated convergence theorem that 
 \[
 \|f(\ta(\cdot))\|, \|Df\|_{B(\ta(\cdot),r)} \in L^1 \quad \text{and} \quad \lim \limits_{r \to 0}\E\|Df\|_{B(\ta(\cdot),r)} = \E\|Df(\ta(\cdot))\|. 
 \]
Hence $\lim \limits_{r \to 0}\E\kappa(\lambda, \ta(\cdot),r) = \E\kappa(\lambda, \ta(\cdot),0)$. Due to \eqref{Anegdef}, there exists $r_0$ small enough such that
 \begin{equation}\label{pointattractorcriterion}
	-\E\kappa(\lambda, \ta(\cdot),r) >4 \lambda \E N^*(\frac{\lambda}{C_pC_g},\bx(\cdot),[0,1]),\quad \forall r \leq r_0.     
 \end{equation}    
	We fix an arbitrary radius $r \leq r_0$ from now on.
    For given $\tau >0$ as in Section \ref{existunique} such that $C_pC_g \ltn \bx \rtn_{\tp,[0,\tau]} = \lambda \leq \frac{1}{8}$, let $y_0\in\R^d$ be close to $\ta(\omega)$ such that $\|y_0-\ta(\omega)\| \leq r $ and consider the solution $y_t$ of \eqref{RDE1} starting from $y_0$. Let  $z_t$ be a
 the solution of the associate ODE \eqref{ascoODE} starting at $y_0$
 \begin{equation*}
		\dot{z}_t = \Big[\frac{\partial \phi }{\partial z}(t,\bx(\omega),z_t)\Big]^{-1} f( \phi_t(\bx(\omega),z_t)),\quad t \in [0,\tau],\ z_0 = y_0,
	\end{equation*}
    and similarly let $\tz_t$ be the solution of this ODE starting at $\tz_0=\ta(\omega)$ where $\ta_t = \phi(t,\bx(\omega),\bar{z}_t) = \bar{z}_t+ \bar{\eta}_t$.
Consider the difference $\gamma_t := z_t- \bar{z}_t$ and
 \begin{equation*}
            \eta_t = y_t - z_t;\  \bar{\eta}_t = \ta_t - \tz_t;\  \psi_t =\Big[\frac{\partial \phi }{\partial z}(t,\bx(\omega),z_t)\Big]^{-1} - Id;\  \tpsi_t =\Big[\frac{\partial \phi }{\partial z}(t,\bx(\omega),\tz_t)\Big]^{-1} - Id.
		\end{equation*}
According to Proposition \ref{solestdiff},
\begin{eqnarray*}			&&\|\bar{\eta}_t- \eta_t\| \leq 4\lambda  \|\tz_t-z_t\| \leq 4 \lambda \|\gamma_t\|; \quad
			\|\tpsi_t - \psi_t\| \leq  256\lambda \|\tz_t-z_t\| \leq  256\lambda\|\gamma_t\|; \notag\\
            &&\|y_t-\ta_t\| \leq \|\tz_t-z_t\| + \|\bar{\eta}_t- \eta_t\| \leq (1+4\lambda)\|\gamma_t\|;\notag\\
            &&\|y_t-\ta_t\| \geq \|\tz_t-z_t\| - \|\bar{\eta}_t- \eta_t\| \geq (1-4\lambda)\|\gamma_t\|;\quad \forall t\in [0,\tau]. 
		\end{eqnarray*}
Then $\gamma$ evolves with its equation
			\begin{eqnarray*}
				\dot{\gamma}_t &=&  \dot{z_t}-\dot{\bar{z}}_t \\
                &=& (Id + \psi_t) f(y_t)-(Id + \tpsi_t) f(\ta_t) \\
                &=& (Id + \psi_t) f(z_t + \eta_t)-(Id + \tpsi_t) f(\tz_t + \bar{\eta}_t) \\
				&=& [f(y_t)-f(\ta_t)] + \psi_t [f(y_t)-f(\ta_t)] +  (\psi_t-\tpsi_t)f(\ta_t).
			\end{eqnarray*}
It follows from Lagrange's mean value theorem and \eqref{fdiff} that for all $t \in [0,\tau]$ and $\|y_t -\ta_t\| <r$,
\begin{eqnarray*}
    \frac{d}{dt} \|\gamma_t\|^2 
    &=& 2 \langle z_t-\bar{z}_t, 
  \dot{z}_t -  \dot{\bar{z}}_t \rangle \\
	&\leq& 2 \Big\langle  y_t -\ta_t+(\bar\eta_t-\eta_t), f(y_t)-f(\ta_t) \Big\rangle +2 \|\gamma_t\|\|\psi_t-\tpsi_t\| \|f(\ta_t)\| \\
		&&+ 2 \| \gamma_t\| \|\psi_t\| \|f(y_t)-f(\ta_t)\| \\
    &\leq& 2 M(Df,\ta_t,r) \| y_t-\ta_t\|^2 +  2 \|\gamma_t\|\| \psi_t-\tpsi_t\| \|f(\ta_t)\| \\
    &&+2 \Big(\|\eta_t-\bar{\eta}_t \|  +   \|\gamma_t\|\|\psi_t\| \Big) \|Df\|_{\infty,B(\ta_t,r)} \|y_t - \ta_t\|.
\end{eqnarray*}
One can estimate the first term on the right hand side of the last inequality as below
\begin{eqnarray*}
    M(Df,\ta_t,r) \| y_t-\ta_t\|^2 &\leq& \begin{cases}
        M(Df,\ta_t,r) \|\gamma_t\|^2 (1- 4\lambda )^2 & \text{if}\quad M(Df,\ta_t,r) \leq 0 \\
        M(Df,\ta_t,r) \|\gamma_t\|^2 (1+ 4\lambda )^2 & \text{if} \quad M(Df,\ta_t,r) \geq 0 
    \end{cases}\\
    &\leq& \Big[ M(Df,\ta_t,r)  (1+ 16\lambda^2) + 8 \lambda | M(Df,\ta_t,r)|\Big] \|\gamma_t\|^2.
\end{eqnarray*}
As a result, since $\lambda <1/8$, we have
\begin{eqnarray*}
    \frac{d}{dt} \|\gamma_t\|^2 
&\leq& 2\Big[ M(Df,\ta_t,r)  (1+ 16\lambda^2) + 8 \lambda | M(Df,\ta_t,r)|\Big] \|\gamma_t\|^2 +2\|f(\ta_t)\|256\lambda   \|\gamma_t\|^2\\
    &&+ 2(4\lambda +4\lambda)(1+4\lambda)  \|Df\|_{\infty,B(\ta_t,r)} \|\gamma_t\|^2 \\
	&\leq& 2\kappa(\lambda,\ta_t,r)\|\gamma_t\|^2,
\end{eqnarray*}
where we use \eqref{solest3} that $\|\psi_t\| \leq 4\lambda$. Hence
    \[
    \|\gamma_t\| \leq \|\gamma_0\| \exp\Big\{ \int_0^t \kappa(\lambda,\ta_s,r)ds \Big\}, 
    \]
    which yields
			\begin{align}\label{ytau}
				\|y_t-\ta_t\| &\leq \big(1+ 4\lambda\big)\exp\Big\{ \int_0^t \kappa(\lambda,\ta_s,r)ds \Big\}\|y_0-\ta(\omega) \|\notag\\
				&\leq\exp\Big\{4 \lambda +\int_0^t \kappa(\lambda,\ta_s,r)ds \Big\} \| y_0-\ta(\omega)\|,\quad \forall t \in [0,\tau].
			\end{align}
			In other words, in order for \eqref{ytau} to hold for all $t\in [0,\tau]$, it is important that
			\begin{equation}\label{localstab} 
			\|y_0 - \ta(\omega)\| \leq \inf_{t \in [0,\tau]} r\exp\Big\{ -4\lambda -\int_0^t \kappa(\lambda,\ta_s,r)ds \Big\},	
            \end{equation}
			where the right hand side of \eqref{localstab} is positive due to the fact that the function under taking infimum is also continuous and positive definite in time. Now constructing the sequence of stopping times $\{\tau_i(\frac{\lambda}{C_pC_g},\bx(\omega),[0,\infty))\}$, we can prove by induction that 
			\begin{align}\label{Delta}
				\|y_t-\ta_t\| &\leq \big(1+ 4\lambda\big)\exp\Big\{ \int_{\tau_n}^t \kappa(\lambda,\ta_s,r)ds \Big\}\|y_{\tau_n}-\ta_{\tau_n} \|\notag\\
				&\leq\exp\Big\{ 4\lambda +\int_{\tau_n}^t \kappa(\lambda,\ta_s,r)ds \Big\} \|y_{\tau_n}-\ta_{\tau_n} \|,\quad \forall t \in [\tau_n,\tau_{n+1}].
			\end{align}
			provided that
			\begin{equation}\label{localstab2} 
				\|y_{\tau_n}-\ta_{\tau_n} \| \leq \inf_{t \in  [\tau_n,\tau_{n+1}]} r\exp\Big\{ -4\lambda -\int_{\tau_n}^t \kappa(\lambda,\ta_s,r)ds\Big\}.
			\end{equation}
	By induction,
	\begin{equation}\label{attractivity} 
		\|y_{\tau_n}-\ta_{\tau_n} \| \leq \exp\Big\{ 4\lambda n+\int_0^{\tau_n} \kappa(\lambda,\ta_s,r)ds \Big\} \| y_0-\ta(\omega)\|,\quad \forall n \geq 0.
	\end{equation}
	Thus in order for $y_{\tau_n}$ to satisfy \eqref{localstab2}, it suffices to choose $y_0$ such that
	\[
	 \| y_0-\ta(\omega)\| \leq  \exp\Big\{ -4\lambda n -\int_0^{\tau_n} \kappa(\lambda,\ta_s,r)ds \Big\} \inf_{t \in  [\tau_n,\tau_{n+1}]} r\exp\Big\{ -4\lambda -\int_{\tau_n}^t \kappa(\lambda,\ta_s,r)ds\Big\},\quad \forall n \geq 0,
	\]
	or $\| y_0-\ta(\omega)\| \leq R(\omega)$ where
	\begin{eqnarray}\label{Romega} 
		R(\omega) &:=& r\inf_{n \geq 0} \Big[\exp\Big\{ -4\lambda n-\int_0^{\tau_n} \kappa(\lambda,\ta_s,r)ds \Big\} \inf_{t \in  [\tau_n,\tau_{n+1}]} \exp\Big\{ -4\lambda -\int_{\tau_n}^t \kappa(\lambda,\ta_s,r)ds\Big\}\Big]\notag\\
        &=& r \inf_{n \geq 0} \Big[\inf_{t \in  [\tau_n,\tau_{n+1}]} \exp\Big\{ -4\lambda (n+1)-\int_0^t \kappa(\lambda,\ta_s,r)ds\Big\}\Big].
	\end{eqnarray}
 Because of the stationarity, $\kappa(\lambda,\ta_s,r) =\kappa(\lambda,\ta(\theta_s \omega),r)$. Assign
			\[
			-\mu:=  \E \kappa(\lambda,\ta(\cdot),r) + \epsilon+4\lambda \E N^*(\frac{\lambda}{C_pC_g},\bx(\cdot),[0,1]).
			\]
Due to \eqref{pointattractorcriterion}, $\E \kappa(\lambda,\ta(\cdot),r) + \epsilon<-\mu<0$ for sufficiently small $\epsilon >0$. 
With such $\epsilon$, it follows from Birkhorff's ergodic theorem and \eqref{infD} that for $n \geq n(\omega)$ large enough,
  \allowdisplaybreaks
\begin{eqnarray*}
&&    r e^{-4\lambda} \inf_{n \geq n(\omega)} \Big[\inf_{t \in  [\tau_n,\tau_{n+1}]} \exp\Big\{ -4\lambda n + t \Big(\frac{1}{t}\int_0^t -\kappa(\lambda, \ta(\theta_s \omega),r)ds\Big)\Big\}\Big] \\
&\geq& r e^{-4\lambda} \inf_{n \geq n(\omega)} \Big[\inf_{t \in  [\tau_n,\tau_{n+1}]} \exp\Big\{ -4\lambda n + t \E\Big( -\kappa(\lambda,\ta(\cdot),r) \Big)- \epsilon \Big\}\Big]\\
&\geq& r e^{-4\lambda} \inf_{n \geq n(\omega)} \exp\Big\{ \tau_n\Big[\E \Big(-\kappa(\lambda,\ta(\cdot),r)\Big) - \epsilon -4\lambda \frac{n}{\tau_n} \Big]\Big\}\\
&\geq& r e^{-4\lambda} \inf_{n \geq n(\omega)} \exp\Big\{ \tau_n\Big[\E\Big( -\kappa(\lambda,\ta(\cdot),r) \Big)- \epsilon -4\lambda \E N^*(\frac{\lambda}{C_pC_g},\bx(\cdot),[0,1]) \Big]\Big\}\\
&\geq& re^{-4\lambda} \inf_{n \geq n(\omega)} \exp\big\{ -\mu\tau_n \big\} \geq re^{-4\lambda}.
\end{eqnarray*}
Thus 
\[
R(\omega) \geq r e^{-4\lambda}\inf_{n \leq n(\omega)} \Big[\inf_{t \in  [\tau_n,\tau_{n+1}]} \exp\Big\{ -4\lambda n -\int_0^t \kappa(\lambda,\ta(\theta_s \omega),r)ds\Big\}\Big]
\]
which is positive. All in all, there exists a random neighborhood $B(\ta(\omega),R(\omega))$ of $a$ such that whenever $\|y_0-\ta(\omega)\| \leq R(\omega)$ then \eqref{localstab2} and \eqref{attractivity} hold for all $n\in \N$, thus $\|y_t-\ta_t\| \leq r$ for all $t \geq 0$ due to \eqref{Delta}. Since the conclusion holds for any fixed $r \leq r_0$, this proves stability. Moreover, it follows from \eqref{Delta} and \eqref{attractivity} that for $t \geq \tau_{n(\omega)}$,
\allowdisplaybreaks
			\begin{eqnarray*}
				\|y_t-a_t\| 
                &\leq& \|y_0-\ta(\omega)\| \exp \Big \{4\lambda (n+1)+ \int_0^t \kappa(\lambda,\ta(\theta_s \omega),r)ds\Big \} \\
                &\leq&  R(\omega)e^{4\lambda} \exp \Big\{\Big[ \frac{1}{t}\int_0^t \kappa(\lambda,\ta(\theta_s \omega),r)ds\Big] t + 4\lambda n \Big\}\\
                &\leq&  R(\omega)e^{4\lambda} \exp \Big\{\Big[\E \kappa(\lambda,\ta(\cdot),r) + \epsilon\Big] t +4 \lambda n \Big\}\\
                 &\leq&  R(\omega)e^{4\lambda} \exp \Big\{\Big[\E \kappa(\lambda,\ta(\cdot),r) + \epsilon\Big] \tau_n + 4\lambda n\Big\} \exp \Big\{\Big[\E \kappa(\lambda,\ta(\cdot),r) + \epsilon\Big] (t-\tau_n) \Big\}\\
                 &\leq&  R(\omega)e^{4\lambda} \exp \Big\{\tau_n\Big[\E \kappa(\lambda,\ta(\cdot),r) + \epsilon+ 4\lambda \frac{n}{\tau_n} \Big] \Big\} \exp \Big\{\Big[\E \kappa(\lambda,\ta(\cdot),r) + \epsilon\Big] (t-\tau_n) \Big\}\\
                 &\leq&  R(\omega)e^{4\lambda} \exp \Big\{\tau_n\Big[\E \kappa(\lambda,\ta(\cdot),r) + \epsilon+4\lambda \E N^*(\frac{\lambda}{C_pC_g},\bx(\cdot),[0,1]) \Big] \Big\} \times \\
                 && \times \exp \Big\{\Big[\E \kappa(\lambda,\ta(\cdot),r) + \epsilon\Big] (t-\tau_n) \Big\}.
			\end{eqnarray*}
		As a result, for all $t\in [\tau_n,\tau_{n+1}]$ and $n\geq n(\omega)$,
			\[
			\|y_t-\ta_t\| \leq R(\omega)e^{4\lambda} e^{\big[\E \kappa(\lambda,\ta(\cdot),r) + \epsilon\big](t-\tau_n) }e^{-\mu \tau_n} \leq R(\omega)e^{4\lambda} e^{-\mu(t-\tau_n) }e^{-\mu \tau_n} \leq 
			R(\omega)e^{4\lambda}  e^{-\mu t}.
			\]
			Since $\|y_t-\ta_t\|\leq r$ for all $t\geq 0 $ whenever $\|y_0-\ta(\omega)\|\leq R(\omega)$, we obtain
			\[
			\|y_t-\ta_t\| \leq \alpha(\omega) \exp (- \mu t) \quad \hbox{for all}\quad t\geq 0,
			\]
			where
			\[
			\alpha(\omega) := \max\Big\{
			r\exp \{\mu \tau_{n(\omega)}\}, R(\omega) e^{4\lambda}\Big\} >0
			\]
			is a positive random variable. This proves the exponential stability of the stationary solution $\ta(\cdot)$ of \eqref{RDE1}.
The proof is complete.
    
	\end{proof}				

\begin{remark}\label{stabcrirem}
\begin{enumerate}
\item It follows from \eqref{infN2} that the necessary condition for the criterion \eqref{Anegdef} to occur is 
\begin{equation}\label{necscri}
-\E \ell (f,\ta(\cdot)) >4 C_p C_g \E \ltn \bx(\cdot)\rtn_{\tp,[0,1]}.    
\end{equation}
Conditions \eqref{Anegdef} and \eqref{necscri} are independent of $g$ in the sense that only $C_g$ is needed, provided that the information about the unperturbed drift $f$ and its derivative $Df$ is given. This is consistent with classical stochastic stability criteria w.r.t. the white noises; see e.g. in \cite{khasminskii} or \cite{duchongcong18} and the references therein. The result in Theorem \ref{local_diss_attractor} is therefore more advantageous in practice, as it often appears that only the information on the trajectories of the particular stationary solution is provided together with the information of the unperturbed autonomous drift $f$, and very little if any can be guessed from the diffusion part $g$. In these scenarios, the expectations on the left-hand sides of \eqref{Anegdef} and \eqref{necscri} are explicitly computable in the pathwise sense as the time averages of functions $\kappa$ and $\ell$ of stationary trajectories $\ta$. In contrast, other approaches might need more information on the linearized system along the stationary solution in an attempt to compute Lyapunov exponents, thereby requiring an unrealistic assumption that the diffusion part is known. 
    
    \item In most cases, we can choose $\lambda$ in \eqref{pointattractorcriterion} to be $C_g \in (0,\frac{1}{8})$, so that the criterion \eqref{Anegdef} or \eqref{Anegdef_new} becomes
    \begin{equation}\label{pointcriterionCg}
-\E\kappa(C_g,\ta(\cdot),0) >4C_g \E N^*(\frac{1}{C_p},\bx(\cdot),[0,1]).        
    \end{equation}
    From \eqref{Anegdef_new}, $-\E\kappa(\lambda,\ta(\cdot),0)$ is a continuous function of $\lambda$, hence if $\E\|\ta(\cdot)\|^\rho = \E\|\ta(C_g,\cdot)\|^\rho$ depends continuously on $C_g$ then the left-hand side of \eqref{pointcriterionCg} is also a continuous function of $C_g$ which is equal to $-\E \ell(f,\ta(0,\cdot),0) >0$ at $C_g =0$, i.e. $Df(\ta(0,\cdot))$ is negative definite in the expectation. In these scenarios, there exists a $C_0 < \frac{1}{8}$ small enough such that \eqref{pointcriterionCg} holds for all $C_g < C_0$. We will apply such a choice $\lambda := C_g$ in Corollary \ref{global_diss_attractor} and in treating the case of trivial solution at the end of this section. 

 \item  It is important to note that the criterion \eqref{Anegdef} can only be checked once the stationary solution $\ta(\omega)$ is given. This solution exists in the case where $f$ is globally dissipative and $g \in C^3_b$ as proved in \cite[Theorem 3.3]{duc21}, because there exists a global random attractor $\tA$ in these scenarios and one can take any stationary solution $\ta\in \tA$ to check the condition \eqref{Anegdef}. We will see from Example \ref{expitchfork} below that not all stationary solutions pass the test \eqref{Anegdef}, thus having a global attractor does not guarantee the local dynamics and stability. 
\end{enumerate}
\end{remark}
As a special case, we show below that if $f$ satisfies the globally strict dissipativity condition, then there exists a unique singleton attractor that is also exponentially stable.
\begin{corollary} \label{global_diss_attractor}
Assume (${\textbf H}_f$),(${\textbf H}_g$),(${\textbf H}_X$) and further that $f$ is globally strictly dissipative in the sense of \eqref{strictdissipative}. Then there exists a positive constant $C_0>0$ such that if  
 $C_g\leq C_0$ then the RDE \eqref{RDE1}  has a stationary solution $\ta(\omega)$ such that\\
    (i)  $\{\ta(\omega)\}$ is a singleton attractor of the RDS $\varphi$, which is both forward and pullback attractor.\\
    (ii) The solution $\ta(\theta_t\omega), t\in\R$ of the RDE \eqref{RDE1} is Lyapunov exponentially stable for arbitrarily large radius $r(\omega)$, for almost all $\omega\in\Omega$.\\
    (iii) $\|\ta(\cdot)\|\in {\cal L}^\rho(\Omega)$ for any $\rho\geq 1$.
		\end{corollary}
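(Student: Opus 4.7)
The overall plan is to treat the three claims separately, leveraging existing machinery for (i) and (iii), and then upgrading the local statement of Theorem \ref{local_diss_attractor} to (ii) by verifying that criterion \eqref{Anegdef} becomes automatic for small $C_g$ under global strict dissipativity.

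For (i), I would first note that \eqref{strictdissipative} applied with $y_2=0$, combined with a linear bound on $\|f(0)\|$, immediately implies the global dissipativity \eqref{globaldiss}. Hence \cite[Theorem 3.3]{duc21} applies for some $C_0>0$ and every $C_g\leq C_0$, producing a singleton pullback attractor $\{\ta(\omega)\}$. To upgrade this to a forward attractor, let $y^{(1)}_t,y^{(2)}_t$ be two solutions of \eqref{RDE1} driven by the same noise and set $\delta_t:=y^{(1)}_t-y^{(2)}_t$. On each greedy interval $[\tau_n,\tau_{n+1}]$ from \eqref{greedytime}, the Doss--Sussmann reduction of Section \ref{existunique} combined with Proposition \ref{solestdiff} turns $\|\delta_t\|^2$ into an ODE whose dominant term is $-2D\|\delta_t\|^2$ by strict dissipativity, plus rough corrections uniformly controlled by $\lambda=C_pC_g\ltn\bx\rtn_{\tp,[\tau_n,\tau_{n+1}]}\leq 1/8$. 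Lemma \ref{infimumD} together with Birkhoff's ergodic theorem then yields a.s. exponential decay $\|\delta_t\|\to 0$ for any pair of solutions, which is forward attraction.

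For (iii), I would exploit the one-sided linear growth \eqref{onesideLipschitz} and Proposition \ref{solest}(ii) to reduce, on each stopping interval $[\tau_n,\tau_{n+1}]$, the evolution of $\|y_t\|^2$ to a differential inequality of the form $\tfrac{d}{dt}\|y_t\|^2\leq C(1+\|y_t\|^2)$ with a constant $C$ independent of $n$ (using that the Doss--Sussmann factor $1+\psi_t$ is uniformly bounded on each interval). Concatenating these estimates through Grönwall and controlling the number of stopping intervals per unit of time via Lemma \ref{infimumD} gives $\sup_{t\geq 0}\E\|\varphi(t,\omega)y_0\|^\rho<\infty$ for any $\rho\geq 1$, and stationarity transfers this bound to $\E\|\ta(\cdot)\|^\rho<\infty$.

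For (ii), the key step is to verify criterion \eqref{Anegdef} of Theorem \ref{local_diss_attractor}. Taking $\lambda=C_g$ as suggested in Remark \ref{stabcrirem}(2), strict dissipativity yields $\ell(f,\hat y)\leq -D$ uniformly in $\hat y\in\R^d$, hence $-\E\ell(f,\ta)\geq D>0$ independently of $C_g$. Meanwhile the correction terms $256C_g\E\|f(\ta)\|$ and $64C_g\E\|Df(\ta)\|$ are finite by (iii) and (${\textbf H}^+_f$), and the right-hand side $4C_g\E N^*(1/C_p,\bx(\cdot),[0,1])$ is of order $C_g$. Shrinking $C_0$ if necessary, the criterion \eqref{Anegdef} therefore holds for all $C_g\leq C_0$, and Theorem \ref{local_diss_attractor} delivers local exponential stability with some random radius $R(\omega)$ and deterministic rate $\mu>0$. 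To extend to arbitrarily large radius $r(\omega)$, I would combine this with the forward attraction from (i): any initial value $y_0$ enters the local attraction basin $B(\ta_t,R(\theta_t\omega))$ within some random finite time $T_0(\omega)$, after which the local exponential decay from Theorem \ref{local_diss_attractor} takes over. The main obstacle here is preserving the deterministic rate $\mu$ in the final bound despite the random transient time $T_0(\omega)$; this is resolved by absorbing $T_0(\omega)$ and the distance travelled during the transient into the random prefactor $\alpha(\omega)$ of Definition \ref{Defstability}(D).
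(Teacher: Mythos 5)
Your verification of criterion \eqref{Anegdef} for part (ii) is exactly what the paper does: observe that \eqref{strictdissipative} and \eqref{dfyhat} give $\ell(f,y)\leq -D<0$ uniformly in $y$, then invoke Remark \ref{stabcrirem}(2) with $\lambda:=C_g$ so the right-hand side of \eqref{Anegdef} is of order $C_g$, and shrink $C_0$. Where you diverge is in (i) and (iii): the paper simply cites \cite[Theorems 3.1 and 3.3]{duc21} and does not re-derive them. You instead sketch proofs from scratch, which is extra work (the results in \cite{duc21} already include the forward attraction and the $L^\rho$ bound).

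Two remarks on your extra material. First, your Gr\"onwall sketch for (iii) as written does not close: a differential inequality of the form $\frac{d}{dt}\|y_t\|^2\leq C(1+\|y_t\|^2)$ with $C>0$ gives $\|y_T\|^2\leq(\|y_0\|^2+1)e^{CT}$, which grows exponentially and cannot yield $\sup_{t\geq 0}\E\|\varphi(t,\omega)y_0\|^\rho<\infty$. You need to keep the negative coefficient from \eqref{globaldiss}, i.e.\ $\frac{d}{dt}\|y_t\|^2\leq D_1-D_2\|y_t\|^2$ plus small rough corrections, which is what makes the estimate uniform in time; \eqref{onesideLipschitz} alone is not enough. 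Second, your treatment of the "arbitrarily large radius" claim in (ii) by concatenating the forward attraction from (i) with the local exponential stability of Theorem \ref{local_diss_attractor}, and absorbing the random transient time into $\alpha(\omega)$, is a sound way to close a step that the paper's one-line proof leaves implicit; note that one cannot simply enlarge $r_0$ in the proof of Theorem \ref{local_diss_attractor}, since the term $64\lambda\|Df\|_{\infty,B(\ta_t,r)}$ inside $\kappa$ grows with $r$, so some appeal to global attraction is genuinely needed.
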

		\begin{proof}
Part (i) and (iii) is a direct consequence of \cite[Theorem 3.3, Theorem 3.1]{duc21}. To prove part (ii), observe from \eqref{strictdissipative} and \eqref{dfyhat} that $- \ell(f,y,0)\geq D >0$ for all $y\in \R^d$. We then follows Remark \ref{stabcrirem} (2) and choose $\lambda := C_g < C_0$ for sufficiently small $C_0$. 
       \end{proof} 
    
\begin{example}\label{expitchfork}
	Consider the pitchfork bifurcation system of the form
	\begin{equation}\label{pitchfork} 
		dy = (\alpha y - y^3) dt + \sigma y dB^H,
	\end{equation}
where $B^H$ is a fractional Brownian motion with the Hurst index $H \in (\frac{1}{3},\frac{1}{2}]$. In that scenarios, it is easy to check that 
\begin{equation}\label{pitchforkdissipative}
\begin{split}
\langle y,f(y)\rangle &= y^2 (\alpha - y^2) \leq \alpha^2 - |\alpha| y^2;\\
\langle y - \hat{y},f(y)-f(\hat{y})\rangle &= (y-\hat{y})^2 \Big[\alpha - y^2 -y\hat{y}-(\hat{y})^2\Big];     
\end{split}
\end{equation}
hence the drift $f$ is globally dissipative. As a result, it follows from \cite[Theorem 5.1]{duckloeden} that there exists for $\sigma$ small enough a global attractor for the stochastic system \eqref{pitchfork}. Our interest is to consider the local stability of points in the global attractor. 

Similar to the situation with stochastic Stratonovich noise from \cite[Subsection 9.3.2, pp. 480]{arnold},  one can solve the stochastic differential equation \eqref{pitchfork} explicitly in the pathwise sense for almost all realizations $B^H_t(\omega) \in C^{H-}$ with $\omega \in \Omega$ as a rough differential equation, by using the rough path technique 
to solve the pure rough equation and then applying the Doss-Sussmann transformation. The rough solution then has the form
\[
\varphi(t,\omega)y = y e^{\alpha t + \sigma B^H_t(\omega)} \Big(1+2y^2 \int_0^t e^{2(\alpha s + \sigma B^H_s(\omega))}ds \Big)^{-\frac{1}{2}},\quad \forall t\geq 0.
\]
When $\alpha <0$, $f$ becomes globally strictly dissipative, thus it follows from \cite[Theorem 3.3]{duc21} that the trivial solution is the unique global attractor of system \eqref{pitchfork} and thus is exponentially stable. When $\alpha >0$,  it follows from \eqref{pitchforkdissipative} that $f$ is no longer globally strictly dissipative, but only locally strictly dissipative around the vicinity 
\[
\mathcal{D} := \{\hat{y} \in \R: \alpha < 3(\hat{y})^2 \}.
\]
The origin then changes its stability as $\alpha$ turns positive, and there appear two more stationary solutions which can be computed explicitly as 
\[
\pm c(\omega) = \pm \Big(2\int_{-\infty}^0 \exp \{2\alpha t +2 \sigma B^H_t(\omega)\}dt \Big)^{-\frac{1}{2}}. 
\]
and it is easy to check that $\E c^2 = \alpha$. Indeed, since $Df(y) = \alpha - 3 y^2$ and $f\in C^2$, $Df(\pm c(\omega))$ might go and leave the negative region $\mathcal{D}$ at any time due to its randomness. However, one can check condition \eqref{Anegdef} as
\[
\E \ell(f,\pm c(\cdot)) = \E Df(\pm c(\cdot)) = \alpha -3 \E c(\cdot)^2   = -2 \alpha <0.    
\]
Thus the two stationary solutions $\pm c(\omega)$ become locally exponentially stable by choosing sufficiently small $\sigma$. One can actually proves that the global attractor of system \eqref{pitchfork} is $\tA(\omega) = [-c(\omega),c(\omega)]$. 
\end{example}

\begin{example}\label{FHNex}
For a subtle example, by following \cite[Section 2.2]{ducjost25}, we consider Fitzhugh-Nagumo system perturbed by any bounded diffusion part $g$ satisfying (${\textbf H}_g$) with a fractional Brownian noise 
\begin{equation}\label{eq:dFHN}
		dy = f(y)dt  + g(y)dB^H_t;\quad f(y) = \left(\begin{matrix}
			 v_t-\frac{v_t^3}{3}-w_t+I\\
			\varepsilon (v_t-\mu w_t + J)
		\end{matrix}\right),\quad y = (v,w)^{\rm T}.
	\end{equation}	
For positive parameter $\varepsilon,\mu,I,J >0$, such that there exists a unique fixed point which is also the global attractor $\cA_0 = \{\ta^*=(v^*,w^*)\}$ of the unperturbed system $\dot{u} = f(u)$ (for instance we can follow \cite[p. 513]{ducjostdatmarius} to choose $I = 0.265, \mu = 0.75, J = 0.7, \varepsilon = 0.08$). Moreover, it is easy to check that $\ta^*$ is locally exponentially stable, thus $\ell (f,\ta^*) <0$. As proved in \cite[Theorem 7]{ducjost25}, the Fitzhugh-Nagumo drift admits a so-called strong Lyapunov function of the form
\[
V(y):=\Big(1+ v^4 + \frac{6}{\varepsilon \mu} w^2\Big)^{\frac{1}{4}}.
\]
Hence it follows from \cite[Theorem 14, Theorem 16]{ducjost25} that system \eqref{eq:dFHN} admits a global random pullback attractor $\cA^g(\omega) \in \cL^\rho$ for any $\rho \geq 1$ (which might contain more than a random point), such that 
\begin{equation}\label{FHNattractor}
\lim \limits_{C_g \to 0} d_H(\cA^g(\omega)|\ta^*) =0\quad \text{a.s.}; \quad  \lim \limits_{C_g \to 0} \E d_H(\cA^g(\cdot)|\ta^*) =0.
\end{equation} 
This implies $[d_H(\cA^g(\cdot)|\ta^*)]^\rho \to 0$ a.s. and also bounded by an integrable function (see the proof in \cite[Theorem 16]{ducjost25}), thus $\E [d_H(\cA^g(\cdot)|\ta^*)]^\rho \to 0$ as $C_g \to 0$ by Lebesgue dominated convergence theorem. Now for any stationary solution $\ta^g(\omega)=(v^g,w^g) \in \cA^g(\omega)$, it is easy to check that $\ta^g \to \ta^*$ both in the almost sure sense and in the $\cL^\rho$ sense. In fact, a direct computation shows that 
\begin{eqnarray*}
|\E \ell (f,\ta^g) - \ell (f,\ta^*)| &\leq& \E |\ell (f,\ta^g)-\ell (f,\ta^*)| \leq \E \|Df(\ta^g)-Df(\ta^*)\|     \\
&\leq& 
\E|(v^g)^2 - (v^*)^2|\\
&\leq& 
 \E \Big[\|\ta^g- \ta^*\|(\|\ta^g-\ta^*\| + 2 \|\ta^*\| )\Big] \\
&\leq& \E \|\ta^g- \ta^*\|^2 + 2 \|\ta^*\|\; \E \|\ta^g- \ta^*\| \\
&\leq & \E \Big[d_H(\cA^g(\cdot)|\ta^*)\Big]^2 +2 \|\ta^*\|\; \E \Big[d_H(\cA^g(\cdot)|\ta^*)\Big] \to 0 \quad \text{as}\quad C_g \to 0. 
\end{eqnarray*}

Therefore, there exists $C_0\in (0,\frac{1}{8})$ such that for all $0 < C_g <C_0$ we have $\E\ell(f,\ta^g)<\frac{1}{2}\ell(f,\ta^*)<0$, and criterion \eqref{Anegdef} in Theorem \ref{local_diss_attractor} is satisfied by choosing $\lambda := C_g$, leading to local exponential stability for $\ta^g$. Note from \cite{ducjostdatmarius} that the noise might trigger a spike, and the trajectory $\{\ta^g(\theta_t\omega)\}_{t\in \R_+}$ might experience a tour out of the vicinity of the fixed point $\ta^*$ into a larger regime before coming back close to $\ta^*$. Although \eqref{FHNattractor} implies that this phenomenon is seldom as $C_g$ decreases to zero, the spiking part of the trajectory when happening could make the computation of Lyapunov exponents more complex than just for the linearization around $\ta^*$, due to the uncertainty of $g$ in practice.

This example can be generalized to any drift $f$ which is locally Lipschitz continuous and admits a singleton global attractor, such that there exists a strong Lyapunov function. 
\end{example}

 \subsection*{Special case: the trivial solution}\label{sec.stab.equilibrium}      
 Of particular interest is the stability for the trivial solution, for this we need to assume $f(0) = 0, g(0) =0$ so that system \eqref{RDE1} admits the trivial solution to be an equilibrium. Since the stability is a local property of the equation, it is natural to restrict our investigation and impose  conditions on the equation only in a neighborhood of the origin. In this subsection, instead of the global conditions (${\textbf H}_{f}$)-(${\textbf H}_{g}$) we will assume that there exist a positive constant $\epsilon_0>0$ such that in the ball $B(0,\epsilon_0):=\{y\in \R^d : \|y\| \leq \epsilon_0\} \subset \R^d$ the following (local) conditions are satisfied.
	
	(${\textbf H}^\eps_{f}$) in the ball $B(0,\epsilon_0)$ the coefficient function $f:\R^d \to \R^d$ of \eqref{RDE1} continuously differentiable and 
 the matrix   $Df(0) \in \R^{d \times d}$ is negative definite, i.e.
	\begin{equation}\label{lambdaf}
			\exists \lambda_f >0:\quad  \langle y, Df(0) y \rangle \leq - \lambda_f \|y\|^2,\quad \forall y \in \R^d.
		\end{equation}
        
	(${\textbf H}^\eps_g$) in the ball $B(0,\epsilon_0)$ the coefficient function $g: B(0,\epsilon_0)\to \cL(\R^m,\R^d)$ of the equation \eqref{RDE1}  belongs to $C^3(B(0,\epsilon_0),\cL(\R^m,\R^d))$; we denote 
	\begin{eqnarray}
C^*_g(\epsilon_0) &:=& \max \Bigg\{\|Dg\|_{\infty,B(0,\epsilon_0)},
\sqrt{\|g\|_{\infty,B(0,\epsilon_0)} \cdot\|Dg\|_{\infty,B(0,\epsilon_0)}},
\notag\\
&&\qquad \qquad \sqrt{\|g\|_{\infty,B(0,\epsilon_0)}\cdot\|D^2g\|_{\infty,B(0,\epsilon_0)}},
\sqrt{\|g\|_{\infty,B(0,\epsilon_0)}\cdot\|D^3g\|_{\infty,B(0,\epsilon_0)}},
\notag\\
&&\qquad \qquad \Big( \|D^2g\|_{\infty,B(0,\epsilon_0)}\|g\|^2_{\infty,B(0,\epsilon_0)} \Big)^{\frac{1}{3}}, \Big( \|D^3g\|_{\infty,B(0,\epsilon_0)}\|g\|^2_{\infty,B(0,\epsilon_0)} \Big)^{\frac{1}{3}} \Bigg\}.\label{gcond*}
	\end{eqnarray}

Note that hypothesis (${\textbf H}^\eps_{f}$) can also cover the case in which $Df(0)$ admits all eigenvalues of negative real parts, by upto a linear transformation (see e.g. \cite[Remark 3.6 (iii)]{duchongcong18}).

Our aim is to obtain the stability of the system \eqref{RDE1} in the neighborhood of the origin (an equilibrium). For this we need to show that the solutions starting near to 0 are not exploded, i.e. it can be extended to all the time $t>0$, and the requirement of the stability are met. 

The solution of \eqref{RDE1} with the initial value $y_0\in B(0,\epsilon_0)$ is understood in the pathwise sense, under the assumptions (${\textbf H}^\eps_f$)-(${\textbf H}^\eps_g$). To apply the arguments in \cite{duc20}, we provide an indirect argument to first extend the local domain $B(0,\epsilon_0)$ to the whole $\R^d$, and then to apply available results for global solutions. To do that, we first need to recall a result on extension of differentiable functions on $\R^d$. Namely, we have the following lemma which is a direct corollary of a theorem by C. Fefferman~\cite[Theorem 1]{fefferman}.
	\begin{lemma}\label{fefferman}
		There exists a positive constant $c_i^* \geq 1$, $i\in\N$, depending only on the dimensions $m$ and $d$ (independent from $\epsilon_0$) such that any function 
		$g: B(0,\epsilon_0) \to \cL(\R^m,\R^d)$ which is in the class $C_b^i(B(0,\epsilon_0))$ can be extended to a function $g^* : \R^d \to \cL(\R^m,\R^d)$ of the class $C_b^i(\R^d)$ with bounded derivatives up to order i, and the following inequality holds
		\[
		\|g^*\|_{C^i_b(\R^d)} \leq c_i^*\|g\|_{C^i_b(B(0,\epsilon_0))}.
		\]
	\end{lemma}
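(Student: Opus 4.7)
The plan is to apply Fefferman's $C^i$-extension theorem componentwise and to use a rescaling argument to make the extension constant independent of $\epsilon_0$. Note that $g$ is matrix-valued, so the first reduction is to treat it entrywise: writing $g=(g_{ij})_{1\le i\le d,\,1\le j\le m}$ as a matrix of scalar functions $g_{ij}\in C^i_b(B(0,\epsilon_0),\R)$, it suffices to extend each scalar component separately and then reassemble the matrix. This step costs only a multiplicative factor depending on $md$ in the final constant.

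Next, to eliminate the dependence on $\epsilon_0$, I would rescale to the unit ball via $\tilde g_{ij}(z):=g_{ij}(\epsilon_0 z)$ for $z\in B(0,1)$. The chain rule gives
\[
\|D^k \tilde g_{ij}\|_{\infty,B(0,1)}=\epsilon_0^{\,k}\,\|D^k g_{ij}\|_{\infty,B(0,\epsilon_0)}\qquad\text{for each }0\le k\le i.
\]
I would then invoke \cite[Theorem 1]{fefferman} on the closed unit ball to produce a scalar extension $\tilde g^{\,*}_{ij}\in C^i_b(\R^d,\R)$ of $\tilde g_{ij}$, with the order-by-order bound
\[
\|D^k \tilde g^{\,*}_{ij}\|_{\infty,\R^d}\ \le\ \hat c_{i,d}\,\|D^k \tilde g_{ij}\|_{\infty,B(0,1)},\qquad k=0,1,\dots,i,
\]
where the constant $\hat c_{i,d}\ge 1$ depends only on $i$ and $d$ (not on $\epsilon_0$).

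Finally, I would unscale by setting $g^{*}_{ij}(y):=\tilde g^{\,*}_{ij}(y/\epsilon_0)$. This is manifestly a $C^i_b$ extension of $g_{ij}$ to $\R^d$, and the chain rule yields $\|D^k g^{*}_{ij}\|_{\infty,\R^d}=\epsilon_0^{-k}\|D^k \tilde g^{\,*}_{ij}\|_{\infty,\R^d}$. Chaining the three estimates, the factors $\epsilon_0^{k}$ and $\epsilon_0^{-k}$ cancel exactly at each derivative order and give
\[
\|D^k g^{*}_{ij}\|_{\infty,\R^d}\ \le\ \hat c_{i,d}\,\|D^k g_{ij}\|_{\infty,B(0,\epsilon_0)},\qquad k=0,1,\dots,i.
\]
Reassembling the matrix entries and taking the maximum over $k$ yields the lemma with $c_i^*\le md\,\hat c_{i,d}$.

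The main obstacle, and the reason the statement is nontrivial, is that one needs Fefferman's theorem to control each derivative order separately rather than only the maximum over all orders; otherwise the factors $\epsilon_0^{k}$ and $\epsilon_0^{-k}$ introduced by the rescaling would mix between orders and fail to cancel when $\epsilon_0\ne 1$. Fortunately the Whitney--Fefferman construction is built from a Whitney decomposition of $\R^d\setminus B(0,1)$, locally-defined Taylor polynomials of degree $i$, and a smooth partition of unity, and direct inspection of that construction yields exactly the order-by-order estimates needed. With that ingredient in hand, the four reductions above combine in a routine way to give the stated bound with a constant $c_i^*$ depending only on $m$, $d$ and $i$.
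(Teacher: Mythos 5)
Your rescaling strategy is natural, but the key claim you need — the order-by-order estimate
\[
\|D^k \tilde g^{\,*}_{ij}\|_{\infty,\R^d}\ \le\ \hat c_{i,d}\,\|D^k \tilde g_{ij}\|_{\infty,B(0,1)},\qquad k=0,1,\dots,i,
\]
for the Fefferman/Whitney extension operator — is false for $i\ge 2$, and no bounded linear extension operator $T:C^i(\overline{B(0,1)})\to C^i_b(\R^d)$ can satisfy it. Indeed, take the affine function $\tilde g(z)=z_1$ on $B(0,1)$. Then $D^2\tilde g\equiv 0$, so an order-by-order bound at $k=2$ would force $D^2\tilde g^*\equiv 0$; thus $\tilde g^*$ would be affine and, agreeing with $z_1$ on $B(0,1)$, equal to $z_1$ identically, which is unbounded on $\R^d$ — contradicting $\tilde g^*\in C^i_b(\R^d)$. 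The structural obstruction is exactly the point you raise as a potential worry: any construction that produces a \emph{bounded} extension of a non-polynomial function on a compact set must truncate at some finite scale, and that truncation necessarily mixes derivative orders. Fefferman's \cite[Theorem 1]{fefferman} provides only a bound on the full $C^i$-norm relative to the trace norm, not derivative-by-derivative, and ``direct inspection of the construction'' does not rescue the stronger statement. Since the paper applies the lemma with $i=3$, this is a genuine gap, not a technicality; the chain of estimates in your final display breaks because the factors $\epsilon_0^{k}$ and $\epsilon_0^{-k}$ only cancel within a single derivative order, and the extension operator unavoidably couples orders.

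The conclusion of the lemma is nevertheless correct, and the repair is to split off the degree-$i$ Taylor polynomial at the origin before rescaling. Write $g = P + R$ with $P(y)=\sum_{|\alpha|\le i}\frac{D^\alpha g(0)}{\alpha!}y^\alpha$. The remainder $R$ has $D^\alpha R(0)=0$ for $|\alpha|\le i$, hence $\|D^k R\|_{\infty,B(0,\epsilon_0)}\lesssim \epsilon_0^{\,i-k}\|D^i g\|_\infty$. Rescaling $\tilde R(z):=R(\epsilon_0 z)$ then gives $\|D^k\tilde R\|_{\infty,B(0,1)}\lesssim\epsilon_0^{\,i}\|D^i g\|_\infty$ for \emph{every} $k\le i$, so the full $C^i_b$-norm of $\tilde R$ on the unit ball is $\lesssim\epsilon_0^{\,i}\|D^i g\|_\infty$. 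Applying Fefferman's extension to $\tilde R$ (no order-by-order statement required, only the full-norm bound) and unscaling, the $\epsilon_0^{-k}$ factor is absorbed by the $\epsilon_0^{\,i}$ already present, and one obtains $\|D^k R^*\|_{\infty,\R^d}\lesssim\|D^i g\|_\infty$ for all $k\le i$ when $\epsilon_0\le 1$. The polynomial piece $P$ is handled by a fixed-scale cutoff $P^*:=P\chi$ with $\chi\equiv 1$ on $B(0,1)$, and its $C^i_b$-norm is controlled by $\sum_{|\alpha|\le i}|D^\alpha g(0)|\le C(i,d)\|g\|_{C^i_b(B(0,\epsilon_0))}$. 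Setting $g^*=P^*+R^*$ gives the extension with a constant independent of $\epsilon_0$. Your componentwise reduction and the overall rescaling idea are fine; it is the missing Taylor subtraction that makes the cancellation of $\epsilon_0$ factors legitimate.
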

	Put
	\begin{equation}\label{constantc*} 
		c^*:= \max\{c_1^*, c_3^*\} \geq 1,
	\end{equation} 
	to be the universal constant that can serve for  estimations of smooth  extensions above. Hence, there are new functions
	\[
	f^*:  \R^d\to \R^d, \qquad 
	g^* : \R^d \to \cL(\R^m,\R^d),
	\]
	such that $f^*, g^*$ are the extensions of $f,g$ from $B(0,\epsilon_0)$ to $\R^d$ provided by Lemma~\ref{fefferman}, i.e. $f^*, g^*$ coincide with $f,g$ in $B(0,\epsilon_0)$ respectively. 
		Hence, we have
		\begin{equation}\label{fefferman2}
    \|Dg\|_{\infty,B(0,\epsilon_0)}
             \leq C^*_g(\epsilon_0)	\leq C_{g^*} \leq c^* C^*_g(\epsilon_0),\quad 
     \|f^*\|_{C^1(\R^d)} \leq  c^* \|f\|_{C^1(B(0,\epsilon_0))}.
		\end{equation}
		Consider the equation
		\begin{equation}\label{RDE1new}
			dy_t = f^*(y_t)dt + g^*(y_t)dx_t,
		\end{equation}
		It is easily seen that the functions $f^*, g^*$ satisfy the strong global assumptions (${\textbf H}_{f^*}$)-(${\textbf H}_{g^*}$) due to \eqref{fefferman2}, as well as the original local assumptions (${\textbf H}^\eps_{f}$)-(${\textbf H}^\eps_{g}$). 
		Therefore, there exists a unique global solution for equation \eqref{RDE1new} due to the choice of $f^*, g^*$. In particular, for any given solution $y_t(\bx,y_0)$ with $y_0\in B(0,\epsilon_0)$ there exists a time 
		\begin{equation}\label{localtime}
			\tau(\bx,y_0) := \sup \{t >0: y_s(\bx,y_0) \in B(0,\epsilon_0)\  \forall s \in [0,t] \} >0,
		\end{equation}
		such that all solution norm estimates can be computed via $f, g$ (instead of $f^*,g^*$) during the time interval $[0,\tau(\bx,y_0))$.

    As a direct consequence of Theorem \ref{local_diss_attractor}, the following result is significantly stronger than \cite[Theorem 17]{GABSch18}.
		
     \begin{theorem}\label{main1}
			Assume that $f(0) = 0$ and $g(0) =0$ so that zero is the trivial solution of \eqref{RDE1}, and the conditions (${\textbf H}^\eps_f$), (${\textbf H}^\eps_g$), (${\textbf H}_X$) hold. Then there exists $C_0>0$ depending only on $f$ such that  if $0<\|Dg(0)\|<C_0$ the trivial solution of \eqref{RDE1} is exponentially stable almost surely. 
		\end{theorem}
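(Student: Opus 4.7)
The plan is to reduce this to Theorem~\ref{local_diss_attractor} applied with the deterministic stationary solution $\ta \equiv 0$. The trivial solution exists because $f(0)=0$ and $g(0)=0$, so $\ta(\omega)\equiv 0$ trivially lies in $L^\rho$ for every $\rho\geq 1$. What must be checked is the criterion \eqref{Anegdef} at $\ta=0$, together with the globalization step that allows us to actually invoke Theorem~\ref{local_diss_attractor}, which was stated under global assumptions.

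First I would reduce the radius $\epsilon_0$ if necessary and apply Fefferman's extension (Lemma~\ref{fefferman}) to obtain $f^*\in C^1_b(\R^d,\R^d)$ and $g^*\in C^3_b(\R^d,\cL(\R^m,\R^d))$ agreeing with $f,g$ on $B(0,\tilde\epsilon_0)$ and satisfying the global assumptions (${\textbf H}_f$), (${\textbf H}^+_f$), (${\textbf H}_g$). Zero remains a (deterministic) stationary solution of the extended RDE \eqref{RDE1new}. The key observation driving the choice of $\tilde\epsilon_0$ is that since $g(0)=0$, a Lagrange bound yields $\|g\|_{\infty,B(0,\tilde\epsilon_0)}\leq \|Dg\|_{\infty,B(0,\tilde\epsilon_0)}\tilde\epsilon_0$, so every term of \eqref{gcond*} except $\|Dg\|_{\infty,B(0,\tilde\epsilon_0)}$ carries a positive power of $\tilde\epsilon_0$ and therefore vanishes as $\tilde\epsilon_0\to 0$. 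Combined with the continuity $\|Dg\|_{\infty,B(0,\tilde\epsilon_0)}\to \|Dg(0)\|$, this gives $C^*_g(\tilde\epsilon_0)\to \|Dg(0)\|$ as $\tilde\epsilon_0\to 0$. Hence for any prescribed threshold $\delta>0$ we can shrink $\tilde\epsilon_0$ so that $C^*_g(\tilde\epsilon_0)\leq 2\|Dg(0)\|$, and the same bound (up to the universal constant $c^*$) transfers to the constant $C_{g^*}$ of the extended system via \eqref{fefferman2}.

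Next I would verify criterion \eqref{Anegdef} at $\ta=0$ following Remark~\ref{stabcrirem}(2), with the choice $\lambda:=C_{g^*}$. Since $f(0)=0$ and Proposition~\ref{propl} combined with \eqref{lambdaf} yields $\ell(f,0)=\sup_{\|h\|=1}\langle h, Df(0)h\rangle\leq -\lambda_f$, the definition of $\kappa$ in \eqref{funkappa} gives
\[
-\E\kappa(C_{g^*},0,0) \;=\; -\ell(f,0)-64\,C_{g^*}\|Df(0)\| \;\geq\; \lambda_f - 64\,C_{g^*}\|Df(0)\|.
\]
On the right-hand side of \eqref{Anegdef}, with $\lambda=C_{g^*}$, the threshold in $N^*$ becomes $1/C_p$, which is independent of $C_{g^*}$, so $\E N^*(1/C_p,\bx(\cdot),[0,1])$ is a finite constant depending only on the law of $\bx$. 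The criterion therefore reduces to
\[
\lambda_f \;>\; 64\,C_{g^*}\|Df(0)\| + 4\,C_{g^*}\,\E N^*(1/C_p,\bx(\cdot),[0,1]),
\]
whose right-hand side is $O(C_{g^*})=O(\|Dg(0)\|)$ by the previous paragraph. Choosing $C_0>0$ small enough (depending only on $\lambda_f$, $c^*$, $C_p$ and the law of $\bx$, hence only on $f$ and the noise) then makes the inequality hold whenever $\|Dg(0)\|<C_0$.

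Theorem~\ref{local_diss_attractor} applied to the extended system now delivers almost sure exponential stability of the zero solution of \eqref{RDE1new} with a random stability radius $R(\omega)>0$ and a random decay constant $\alpha(\omega)$. The last step is to return to the original equation: by Definition~\ref{Defstability}(A), for any $\varepsilon\leq \tilde\epsilon_0$ there is a random $r(\omega)>0$ such that $\|y_0\|<r(\omega)$ forces $\sup_{t\geq 0}\|y_t\|<\varepsilon\leq \tilde\epsilon_0$, so the trajectory remains inside the region where $f^*=f$ and $g^*=g$; uniqueness then implies that this is also a solution of the original RDE \eqref{RDE1}, and exponential decay transfers verbatim. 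The main obstacle is the first one: controlling $C^*_g(\tilde\epsilon_0)$ purely in terms of $\|Dg(0)\|$, which is why the flexibility to shrink $\tilde\epsilon_0$ together with the identity $g(0)=0$ is essential; once this is in hand the application of Theorem~\ref{local_diss_attractor} is routine.
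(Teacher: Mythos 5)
Your proposal is correct and follows essentially the same route as the paper: Fefferman extension to globalize, shrinking the ball (using $g(0)=0$ and $g\in C^3_b$) to make every term of $C^*_g(\epsilon)$ except $\|Dg\|_\infty$ vanish, choosing $\lambda=C_{g^*}$ so the criterion \eqref{Anegdef} becomes linear in $C_{g^*}$, and then cutting off via stability to return to the original equation. The only cosmetic discrepancy is that you correctly note $C_0$ also depends on the law of $\bx$ (through $\E N^*$), whereas the theorem statement says ``depending only on $f$''; this is a looseness in the paper's own wording (its explicit $C_0$ in the analogous Theorem~\ref{main1_new} likewise involves $\E\ltn\bx\rtn^p_{\tp,[0,1]}$), not a gap in your argument.
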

		\begin{proof}
			Given the equation \eqref{RDE1} on $B(0,\epsilon_0)$, we follow the extension process provided by Lemma~\ref{fefferman} to extend $f,g$ on $B(0,\epsilon_0)$ to $f^*,g^*$ on $\R^d$. Then consider equation \eqref{RDE1new}
			\begin{equation*}
				dy_t = f^*(y_t)dt + g^*(y_t)dx_t,
			\end{equation*}
			It is easily seen that the functions $f^*, g^*$ satisfy the strong assumptions (${\textbf H}_{f^*}$)-(${\textbf H}_{g^*}$) with \eqref{fefferman2}, thus the solutions of the equation \eqref{RDE1} exist on the whole half line $0\leq t <\infty$, hence \eqref{RDE1} generate a global random dynamical system, see e.g. \cite[Theorem 4.3]{riedelScheutzow}, \cite{duc21} and \cite{duckloeden}.
            Note that \eqref{fefferman2} implies \eqref{f_power_rate}, condition \eqref{Anegdef} can be verified by using \eqref{lambdaf}, and the trivial solution is a stationary solution of \eqref{RDE1}. Thus the assumptions of Theorem \ref{local_diss_attractor} are satisfied. Since both $g\in C^3_b$ and $g(0)=0$, the fact that $C_{g^*}$ is sufficiently small is equivalent to choosing $C^*_g(\epsilon_0)$ in \eqref{gcond*} to be as small as possible by considering a smaller ball 
    $B(0,\epsilon)\subset B(0,\epsilon_0)$ if necessary to make $\|g\|_{\infty,B(0,\epsilon)}$ as small as possible, and then requiring $\|Dg(0)\|$ to be sufficiently small so that $\|Dg\|_{\infty,B(0,\epsilon)}$ is controlled to be as small as possible. In other words, if $\|Dg(0)\| < C_0$ for sufficiently small $C_0$ then the trivial solution of the extended system \eqref{RDE1new} on a sufficiently small ball $B(0,\epsilon)\subset B(0,\epsilon_0)$ is locally exponentially stable under the additional assumptions (${\textbf H}_{f}$)-(${\textbf H}_{g}$)-\eqref{f_power_rate}. That is, there exist two positive random variables $\epsilon_0 > \epsilon \geq R^*(\omega)>0$, $\alpha^*(\omega)>0$ and a positive constant $\mu$ such that if $\|y_0\|\leq R^*(\omega)$ then the solution $y_t$ of \eqref{RDE1new}, starting from $y_0$, satisfies $\|y_t\|\leq \epsilon < \epsilon_0$  and
			\begin{equation}\label{cutoff.eqs}
				\|y_t\| \leq \alpha^*(\omega) \exp(-\mu t),\quad \forall t\geq 0.
			\end{equation}
			We notice here that since $f^*=f$ and $g^*=g$ on $B(0,\epsilon_0)$ the quantities $\|Df(0)\|$,  $C^*_g$ defined for the equation \eqref{RDE1} coincide with their counterparts defined for \eqref{RDE1new}.
			
			For any initial value $\|y_0\| \leq R^*(\omega) \leq \epsilon < \epsilon_0$ the solution $y_t$ of  \eqref{RDE1new}, starting from $y_0$, satisfies $\|y_t\| \leq \epsilon$ for all $t\geq 0$, hence it is the solution of \eqref{RDE1} starting from $y_0$ because \eqref{RDE1} coincides with \eqref{RDE1new} for all those solutions. Therefore, for any initial value $y_0$ satisfying $\|y_0\| \leq R^*(\omega)$ the solution $y_t$ of \eqref{RDE1} starting from $y_0$ satisfies $\|y_t\|\leq \epsilon$  and \eqref{cutoff.eqs} for all $t\geq 0$. 
			This implies that the trivial solution of equation \eqref{RDE1} is exponential stable. 
		\end{proof}

 \section{Lyapunov stability of discrete  systems}
In this section, we study the local dynamics of a discrete system of the form 
		\begin{equation}\label{REuler}
			\begin{split}
				y^\Delta_0 &\in \R^d,\\
				y^\Delta_{t_{k+1}} &= y^\Delta_{t_k} + f(y^\Delta_{t_k}) \Delta + g(y^\Delta_{t_k})x_{t_k, t_{k+1}} + Dg(y^\Delta_{t_k})g(y^\Delta_{t_k})\X_{t_k,t_{k+1}},\quad k \in \N.
			\end{split}
        \end{equation}
under the regular grid $\Pi = \{t_k:=k\Delta\}_{k \in \N}$, $0<\Delta\leq 1$. Note that numerical approaches to study rough differential equations go back to \cite{lyons94}, \cite{davie}, \cite{frizdis} (see also \cite{lejay}, \cite{liu} for further details). The global dynamics of the discrete system \eqref{REuler} has been studied recently in \cite{duckloeden}, \cite{congduchong23}, which show that there is a similarity in asymptotic behavior of the continuous system \eqref{RDE1} and its discretization \eqref{REuler} in the sense that the existing random attractor of the discrete system \eqref{REuler} converges to the random attractor of the continuous system \eqref{RDE1} as the step size $\Delta$ tends to zero. A difficulty in dealing with the discrete system is that we can not apply the Doss-Sussmann technique, simply because it is difficult to control the solution growth in a smooth way for the discrete time set. One way to overcome this challenge is to couple the discrete system \eqref{REuler} with its unperturbed discrete system and control the difference of the two trajectories by applying the discrete sewing lemma. For this, an additional condition of global Lipschitz continuity of $f$ is required. \\
  
 Throughout this section, we will assume (${\textbf H}_{f}$), (${\textbf H}_{g}$), (${\textbf H}_{X}$) and further more:   
\begin{equation}\label{Lf}
        L_f:=\|Df\|_{\infty}<\infty.
\end{equation}
In particular, denote by $L_g$ the quantity
\begin{equation}\label{Lgdis}
    L_g:=\max\Bigg\{\|Dg\|_{\infty},\|D^2g\|_\infty,\left[\|g\|_\infty (\|Dg\|_\infty\vee \|D^2g\|_\infty\vee \|D^3g\|_\infty)\right]^{\frac{1}{2}},  \left(\|g\|^2_\infty  \|D^3g\|_\infty\right)^{\frac{1}{3}} \Bigg\}. 
\end{equation}
which is somehow similar to $C_g$ in \eqref{gcond.new}.\\

We now state definition of stationary solution to discrete-
time systems \eqref{REuler} analogous to that for continuous time systems.

    \begin{definition}
    A random point $\ta^\Delta(\cdot):\Omega \to \R^d$ is called a stationary solution of \eqref{REuler} if $\ta^\Delta(\theta_t\omega), t\in\Pi,$ is the solution of \eqref{REuler} starting from $\ta^\Delta(\omega)\in\R^d$ at time 0.
    \end{definition}
The concept of stability for the stationary solution is defined in a similar way to Definition \ref{Defstability} except the continuous time set $\R_+$ is replaced by the discrete time set $\Pi$.

Following Section \ref{contRDE}, we assume that $\ta^\Delta$ is a stationary solution to \eqref{REuler}. 
Recall from Section \ref{contRDE} that
    \begin{equation}\label{M}
      \langle y - \ta^\Delta(\omega), f(y)-f(\ta^\Delta(\omega)) \rangle \leq M(Df,\ta^\Delta(\omega),r) \|y-\ta^\Delta(\omega)\|^2,\quad \forall \|y-\ta^\Delta(\omega)\| < r,\ \forall \omega\in\Omega
    \end{equation}
where it follows from \eqref{Mdf} and \eqref{Lf} that
\[
|M(Df,\ta^\Delta(\omega),r)|=| \ell(f,\ta^\Delta(\omega)) + osc(Df(\ta^\Delta(\omega)))_r |\leq 3L_f. 
\]
From now on, we write $ M(\ta^\Delta(\omega))$ in short for simplicity. We would like to study the local stability of a stationary solution $\ta^\Delta(\omega)$ of \eqref{REuler}. 
We write $\ta^\Delta_t = \ta^\Delta(\theta_t \omega)$ in short. To obtain local stability, we estimate the difference between an arbitrary solution $y^\Delta$ of \eqref{REuler}  and $\ta^\Delta$. Assign
$h_t:= y^\Delta_t-\ta^\Delta_t$ and 
\[
    P_{s,t}: = [g(y^\Delta_s)x_{s,t} + Dg(y^\Delta_s)g(y^\Delta_s)\X_{s,t}]-[g(\ta^\Delta_s)x_{s,t} + Dg(\ta^\Delta_s)g(\ta^\Delta_s)\X_{s,t}].
\]
Then
    \begin{equation}\label{h1}
    h_{t_{k+1}} = h_{t_k} + [f(y^\Delta_{t_k})-f(\ta^\Delta_{t_k})]\Delta +P_{t_k, t_{k+1}}.
    \end{equation}
and it is easy to check that
\begin{equation}\label{Pvsh}
\|P_{s,t}\|\leq \left[\|Dg\|_\infty \|x_{s,t}\| +(\|g\|_\infty \|D^2g\|_\infty +\|Dg\|^2_\infty )\|\mathbb{X}_{s,t}\|\right].\|h_s\|.
\end{equation}

\begin{proposition}\label{hnew}
If $4C_pL_g\ltn \bx\rtn_{\tp,\Pi[a,b]}\leq \lambda<\dfrac{1}{2}$, the following estimate holds
        \begin{equation*}
		  \ltn h, R^h \rtn_{\tp,\Pi[a,b]}
		   \leq\Big[L_f (b-a)+2\lambda \|f(\ta^\Delta)\|_{\infty,[a,b]} (b-a)+(4C_p+1)L_g \ltn \bx\rtn_{\tp,\Pi[a,b]} \Big] \| h,R^h\|_{\tp,\Pi[a,b]}. 
	   \end{equation*}
\end{proposition}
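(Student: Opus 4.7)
The plan is to extract the controlled rough path structure of $h = y^\Delta - \ta^\Delta$ from the recursion \eqref{h1}, then estimate its increments and remainder $R^h_{s,t}$ for $s,t \in \Pi \cap [a,b]$ via the Lipschitz bound on $f$, the bound \eqref{Pvsh} on $P_{s,t}$, and the discrete sewing lemma in the style of Davie/Lejay. Subtracting the two Euler recursions gives, over one step,
\[
h_{t_k,t_{k+1}} = [f(y^\Delta_{t_k})-f(\ta^\Delta_{t_k})]\Delta + [g(y^\Delta_{t_k})-g(\ta^\Delta_{t_k})]x_{t_k,t_{k+1}} + [Dg(y^\Delta_{t_k})g(y^\Delta_{t_k})-Dg(\ta^\Delta_{t_k})g(\ta^\Delta_{t_k})]\X_{t_k,t_{k+1}},
\]
which reveals the natural Gubinelli derivative $h'_s := g(y^\Delta_s)-g(\ta^\Delta_s)$ and a one-step remainder consisting of the drift contribution and the $\X$-term.

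I would then bound each contribution separately. The drift sum $\sum_k [f(y^\Delta_{t_k})-f(\ta^\Delta_{t_k})]\Delta$ is, by \eqref{Lf}, at most $L_f(b-a)$ times the sup norm of $h$ on $[a,b]$, giving the first coefficient $L_f(b-a)$. For the second-order piece I would split
\[
Dg(y)g(y)-Dg(\ta)g(\ta) = [Dg(y)-Dg(\ta)]g(y) + Dg(\ta)[g(y)-g(\ta)],
\]
and bound each factor via the mean value theorem together with the products $\|Dg\|_\infty$, $\|g\|_\infty\|D^2g\|_\infty$, $\|Dg\|^2_\infty$, all controlled by $L_g^2$ through the definition \eqref{Lgdis}. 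The cross term $2\lambda\|f(\ta^\Delta)\|_{\infty,[a,b]}(b-a)$ is expected to arise when one iterates the Euler recursion for $\ta^\Delta$ itself in order to rewrite the reference drift $f(\ta^\Delta_{t_k})\Delta$ against the perturbation increments; the smallness assumption $4C_pL_g\ltn\bx\rtn_{\tp,\Pi[a,b]}\leq\lambda$ then converts the $\|f(\ta^\Delta)\|_\infty$-weighted rough factor into a $\lambda$-weighted drift factor.

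The main obstacle will be the sewing step that produces the coefficient $4C_p$. Concretely, the candidate remainder $\tilde R^h_{s,t} := h_{s,t} - h'_s x_{s,t}$ is only almost-additive: its defect $\delta \tilde R^h_{s,u,t}$ involves $\delta h'_{s,u}\,x_{u,t}$ and the increment of $Dg(y)g(y)-Dg(\ta)g(\ta)$ paired with $\X_{u,t}$. One must verify that the $p/q$-variation of this defect satisfies the hypotheses of the discrete sewing lemma with the correct control, so that the lemma yields the bound on $R^h$ with constant $4C_p$. Once this is done, combining the sewing output with the trivial one-step estimate for $h'_s x_{s,t}$ (which provides the extra $+1$) and summing over the grid $\Pi \cap [a,b]$ will produce the claimed inequality, the absorption being legitimate because $\lambda < 1/2$ keeps the self-referential error subcritical.
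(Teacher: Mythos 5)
Your outline follows the paper's route: extract the controlled structure of $h$ with Gubinelli derivative $h'_s = g(y^\Delta_s)-g(\ta^\Delta_s)$, split $Dg(y)g(y)-Dg(\ta)g(\ta)$ via the mean value theorem, estimate the defect $(\delta P)_{s,u,t}$, apply the discrete sewing lemma to obtain the $C_p$-weighted contribution, and close using the smallness hypothesis $4C_pL_g\ltn\bx\rtn_{\tp,\Pi[a,b]}\leq\lambda$. The one mechanism you leave implicit is the genuinely load-bearing one: the defect $(\delta P)$ necessarily carries the products $\|h\|_{\infty}\ltn \ta^\Delta\rtn_{\tp}$ and $\|h\|_{\infty}\ltn R^{\ta^\Delta}\rtn_{\tq}$ (from the terms where the mean value theorem acts on $Dg$ and $D^2g$ along the reference path), and it is bounding these norms by running the Euler scheme for $\ta^\Delta$ alone that produces $\ltn\ta^\Delta,R^{\ta^\Delta}\rtn\lesssim\|f(\ta^\Delta)\|_{\infty}(b-a)+\|g\|_\infty\ltn x\rtn+\ldots$; the hypothesis then turns the multiplying rough-path factor into $\lambda/(1-\lambda)<2\lambda$, yielding $2\lambda\|f(\ta^\Delta)\|_\infty(b-a)$.

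Two small corrections to your bookkeeping. First, the ``$+1$'' in $(4C_p+1)$ is not the one-step estimate for $h'_sx_{s,t}$ (that contribution is already folded into the $4C_p$ grouping together with the sewing output and the pointwise bound on $P$); it is the leftover $\tfrac{\lambda}{1-\lambda}L_g\ltn\bx\rtn_{\tp}$ coming from the $\ta^\Delta$ rough-norm term, which is $\leq 1\cdot L_g\ltn\bx\rtn_{\tp}$ precisely because $\lambda<1/2$. Second, no absorption of the $\ltn h,R^h\rtn$-terms is actually required to prove this proposition: intermediate occurrences of $\|h\|_{\infty,\Pi[a,b]}$ and $\ltn h,R^h\rtn_{\tp,\Pi[a,b]}$ are each bounded directly by $\|h,R^h\|_{\tp,\Pi[a,b]}$ (by definition \eqref{yRynorm}), so the inequality closes as a one-sided bound; the genuine absorption, using that the total coefficient is below $1$, is deferred to Proposition \ref{case1}.
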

\begin{proof} 
See proof in the Appendix.\\
\end{proof}
This motivates us to construct a discrete sequence of stopping times $\tau^{\Delta}_n (\lambda,\cS,[0,\infty))$ as presented in Subsection \ref{dis_greedy} based on a fixed $\lambda<\dfrac{1}{2}$ and
\[
	\mathcal{S} =\{\bw^{(1)}, \bw^{(2)}, \bw^{(3)}, \bw^{(4)}\},
\]
in which 
\begin{eqnarray*}
&&\bw^{(1)}_{s,t}=L_f(t-s), \beta_1 = 1;\qquad
 \bw^{(2)}_{s,t} = (4C_p+1)^p L_g^p \ltn x \rtn^p_{\tp,\Pi[s,t]}, \beta_2 = \frac{1}{p};\\ 
&&\bw^{(3)}_{s,t} =(4C_p+1)^pL_g^p \ltn\X \rtn^{q}_{\tq,\Pi[s,t]}, \beta_3 = \frac{1}{q};\qquad
\bw^{(4)}_{s,t} = 2\lambda  \|f(\ta^\Delta)\|_{\infty,\Pi[s,t]}(t-s),  \beta_4 =  1.
\end{eqnarray*}
Note that $\bw^{(4)}_{s,t}, a\leq s\leq t\leq b $ is a control since it  is continuous, superadditive and zero on the diagonal.

Throughout this section, we will write $\tau^{\Delta}_n=\tau^{\Delta}_n (\lambda,\cS,[0,\infty))$ for abbreviation without indicating $\cS$.
Observe that whenever $\|y^\Delta_{t_k}-a^\Delta_{t_k}\| <r$,  
		\begin{eqnarray}\label{eqRDE:02}
			\|h_{t_{k+1}}\|^2 &=& \|h_{t_k}\|^2 + \|f(y^\Delta_{t_k})-f(\ta^\Delta_{t_k})\|^2 \Delta^2+ \|P_{t_k,t_{k+1}}\|^2 \notag\\
            &&+2 \langle y^\Delta_{t_k}-\ta^\Delta_{t_k}, f(y^\Delta_{t_k})-f(\ta^\Delta_{t_k})\rangle \Delta  + 2 \langle h_{t_k}, P_{t_k,t_{k+1}} \rangle + 2 \langle f(y^\Delta_{t_k})-f(\ta^\Delta_{t_k}), P_{t_k,t_{k+1}} \rangle \Delta \notag\\
    	&\leq& \|h_{t_k}\|^2 \left[1+2  M(\ta^\Delta_{t_k} )\Delta +L_f^2 \Delta^2 \right]+ 2 \langle f(y^\Delta_{t_k})-f(\ta^\Delta_{t_k}), P_{t_k,t_{k+1}}\rangle \Delta \notag\\ 
        &&+ 2 \langle h_{t_k}, P_{t_k,t_{k+1}} \rangle +\| P_{t_k,t_{k+1}}\|^2.
		\end{eqnarray}
Consider the first interval $[\tau^\Delta_0,\tau^\Delta_1]$ where $\tau^\Delta_0 =0$. There are two possibilities. The trivial case is when 
\[ 
\Big[ L_f (\tau^\Delta_{1}-\tau^\Delta_0)+2\lambda \|f(\ta^\Delta)\|_{\infty,\Pi[\tau^\Delta_0,\tau^\Delta_1]} (\tau^\Delta_1-\tau^\Delta_0)+(4C_p+1)L_g\ltn \bx \rtn_{\tp,\Pi[\tau^\Delta_0,\tau^\Delta_{1}]} \Big] > \lambda,     
\] 
then $\tau^\Delta_0,\tau^\Delta_{1}$ are consecutive in $\Pi$, i.e. $\tau^\Delta_1-\tau^\Delta_0 = \Delta$. Then \eqref{Pvsh} and \eqref{eqRDE:02} yield
 \begin{eqnarray*}
				\|h_{\tau^\Delta_1}\|^2 &\leq&  \Big[ 1+2  M(\ta^\Delta_{\tau^\Delta_0 })\Delta +L_f^2 \Delta^2 +2(1+L_f\Delta)(L_g\|\bx_{\tau^\Delta_0,\tau^\Delta_1}\|+L^2_g\|\bx_{\tau^\Delta_0,\tau^\Delta_1}\|^2 )  \Big.\notag\\
                &&\Big. \hspace{4cm}+(L_g\|\bx_{\tau^\Delta_0,\tau^\Delta_1}\|+L^2_g\|\bx_{\tau^\Delta_0,\tau^\Delta_1}\|^2 )^2\Big]\|h_{\tau^\Delta_0}\|^2\\
                &\leq&   \exp \Big\{\Big[2M(\ta^\Delta_{\tau^\Delta_0 }+ L^2_f\Delta\Big](\tau^\Delta_1-\tau^\Delta_0)+3e^{6L_f\Delta}L_g\ltn \bx\rtn_{\tp,[\tau^\Delta_0,\tau^\Delta_1] }\Big\}\|h_{\tau^\Delta_0}\|^2 
\end{eqnarray*}
as $\|h_{\tau^\Delta_0}\|\leq r$ by assumption, where the last inequality comes from the Taylor expansion of the exponential function.
The other case is non-trivial and needs to be proved as follows.
\begin{proposition}\label{case1}
  Assume $\|h_{\tau^\Delta_{0}} \|\leq \frac{r}{16(1+C_p)} $.  If 
  \[
    \Big[L_f (\tau^\Delta_{1}-\tau^\Delta_0) +2\lambda \|f(\ta^\Delta)\|_{\infty,\Pi[\tau^\Delta_0,\tau^\Delta_1]} (\tau^\Delta_1-\tau^\Delta_0)+(4C_p+1) L_g\ltn \bx \rtn_{\tp,\Pi[\tau^\Delta_0,\tau^\Delta_{1}]}\Big] \leq \lambda
\] then 
    \begin{equation}\label{ind}
			\|h_{\tau^{\Delta}_1}\|^2
			\leq \exp \Big\{\Big[2\bar{M}_0 + L_f^2\Delta\Big](\tau^\Delta_1-\tau^\Delta_0)  +6(1+\lambda)(1+2C_p)^2e^{12\lambda+2\lambda^2}L_g\ltn \bx\rtn_{\tp,\Pi[\tau^\Delta_0,\tau^\Delta_{1}]} \Big\}\|h_{\tau^\Delta_0}\|^2.
     		  \end{equation} 
where $\bar{M}_0$ denotes the average of $M(\ta^\Delta_{t_k}),\ t_k\in [\tau^\Delta_0,\tau^\Delta_1)$.
\end{proposition}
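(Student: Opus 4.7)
The plan is to organize the proof around three ingredients: the uniform bound on $h$ from Proposition \ref{hnew}, a telescoping of the one-step inequality \eqref{eqRDE:02}, and the discrete sewing lemma of Davie--Lejay to handle the rough-path cross terms linearly in $L_g\ltn\bx\rtn_{\tp}$.

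First, within the stopping interval $[\tau^\Delta_0,\tau^\Delta_1]$ the hypothesis forces the combined control to be at most $\lambda<1/2$, so Proposition \ref{hnew} yields $\ltn h,R^h\rtn_{\tp,\Pi[\tau^\Delta_0,\tau^\Delta_1]}\leq \lambda\,\|h,R^h\|_{\tp,\Pi[\tau^\Delta_0,\tau^\Delta_1]}$. Isolating the seminorm that occurs on both sides gives $\ltn h\rtn\lesssim \lambda\|h_{\tau^\Delta_0}\|$ and hence a uniform bound $\|h_{t_k}\|\leq C(1+C_p)\|h_{\tau^\Delta_0}\|$ for every $t_k\in[\tau^\Delta_0,\tau^\Delta_1]$; the initial smallness $\|h_{\tau^\Delta_0}\|\leq r/(16(1+C_p))$ then guarantees $\|h_{t_k}\|<r$ throughout, so that \eqref{M} applies. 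Feeding this back into \eqref{Pvsh} produces $\|P_{t_k,t_{k+1}}\|\leq C\|h_{\tau^\Delta_0}\|\,L_g(\|x_{t_k,t_{k+1}}\|+2L_g\|\X_{t_k,t_{k+1}}\|)$.

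Second, I would rewrite \eqref{eqRDE:02} in the multiplicative form $\|h_{t_{k+1}}\|^2\leq\|h_{t_k}\|^2(1+Q_k)$, grouping the diagonal part $2M(\ta^\Delta_{t_k})\Delta+L_f^2\Delta^2$ with the normalized $P$-contributions; then $\log(1+x)\leq x$ and telescoping give
\[
\|h_{\tau^\Delta_1}\|^2\leq \|h_{\tau^\Delta_0}\|^2\exp\Big\{(2\bar{M}_0+L_f^2\Delta)(\tau^\Delta_1-\tau^\Delta_0)+\Sigma\Big\},
\]
where $\Sigma=\sum_k\big[2(1+L_f\Delta)\|h_{t_k}\|^{-1}\|P_{t_k,t_{k+1}}\|+\|h_{t_k}\|^{-2}\|P_{t_k,t_{k+1}}\|^2\big]$. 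The diagonal part of the exponent has already appeared, and the remaining task is to show $\Sigma\leq 6(1+\lambda)(1+2C_p)^2e^{12\lambda+2\lambda^2}L_g\ltn\bx\rtn_{\tp,\Pi[\tau^\Delta_0,\tau^\Delta_1]}$.

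Third, a direct H\"older bound on $\Sigma$ brings in the number of grid points and so cannot be linear in $L_g\ltn\bx\rtn_{\tp}$. To avoid this, I would apply the discrete sewing lemma to the two-parameter processes $\Xi^{(1)}_{s,t}=2\langle h_s,P_{s,t}\rangle$ and $\Xi^{(2)}_{s,t}=\|P_{s,t}\|^2$. Using Chen's relation $\X_{s,t}=\X_{s,u}+\X_{u,t}+x_{s,u}\otimes x_{u,t}$ together with Taylor expansions of $g(y)-g(\ta)$ and $(Dg\cdot g)(y)-(Dg\cdot g)(\ta)$, and invoking the $p$-variation control on $h$ from the first step, one can bound $|\delta\Xi^{(i)}_{s,u,t}|\leq \omega(s,t)^\zeta$ for a superadditive control $\omega$ and some $\zeta>1$. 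The sewing lemma then yields $|\sum_k\Xi^{(i)}_{t_k,t_{k+1}}-\Xi^{(i)}_{\tau^\Delta_0,\tau^\Delta_1}|\leq C_p\,\omega(\tau^\Delta_0,\tau^\Delta_1)^\zeta$, while $|\Xi^{(i)}_{\tau^\Delta_0,\tau^\Delta_1}|$ is controlled directly by $C\|h_{\tau^\Delta_0}\|^2 L_g\ltn\bx\rtn_{\tp}$. All higher-order terms $(L_g\ltn\bx\rtn_{\tp})^j$ with $j\geq 2$ are absorbed using the stopping bound $(4C_p+1)L_g\ltn\bx\rtn_{\tp}\leq\lambda$, and the bookkeeping of constants collapses everything into a linear bound in $L_g\ltn\bx\rtn_{\tp}$ with explicit prefactor.

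The main obstacle will be this third step: the expansion of $\delta\Xi^{(i)}_{s,u,t}$ must be carried out so that each summand is a product of at least three increments (to guarantee $\zeta>1$), and the constants must be tracked carefully through Chen's identity and the Taylor expansion so that the sewing factor appears cleanly as $(1+2C_p)^2$ in the final bound. Once the sewing estimate is in place, the remaining prefactor $6(1+\lambda)e^{12\lambda+2\lambda^2}$ is produced by routine absorption using $1+x\leq e^x$ and the smallness $\lambda<1/2$.
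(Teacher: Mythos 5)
Your opening steps are on the right track: Proposition~\ref{hnew} does give both the bound $\ltn h,R^h\rtn_{\tp,\Pi[\tau^\Delta_0,\tau^\Delta_1]}\leq\frac{\lambda}{1-\lambda}\|h_{\tau^\Delta_0}\|$ and the uniform control $\|h\|_{\infty,\Pi[\tau^\Delta_0,\tau^\Delta_1]}\leq 2\|h_{\tau^\Delta_0}\|$ (which is what keeps the iterates inside $B(\ta^\Delta,r)$), and the sewing lemma is indeed the device that prevents the number of grid points from entering the estimate.

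The gap is in the passage from your Step 2 to your Step 3. Having telescoped $\|h_{t_{k+1}}\|^2\leq\|h_{t_k}\|^2(1+Q_k)$ and taken logarithms, the remainder you must control is $\Sigma=\sum_k\bigl[2(1+L_f\Delta)\|h_{t_k}\|^{-1}\|P_{t_k,t_{k+1}}\|+\|h_{t_k}\|^{-2}\|P_{t_k,t_{k+1}}\|^2\bigr]$, a sum with $k$-dependent weights $\|h_{t_k}\|^{-1},\|h_{t_k}\|^{-2}$. The quantities you then propose to sew, $\Xi^{(1)}_{s,t}=2\langle h_s,P_{s,t}\rangle$ and $\Xi^{(2)}_{s,t}=\|P_{s,t}\|^2$, are \emph{unweighted}; the sewing lemma only gives you $\sum_k\Xi^{(i)}_{t_k,t_{k+1}}\lesssim\|h_{\tau^\Delta_0}\|^2L_g\ltn\bx\rtn_{\tp}$. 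To convert this into a bound on $\Sigma$ you would need a uniform \emph{lower} bound on $\|h_{t_k}\|$ comparable to $\|h_{\tau^\Delta_0}\|$, and this is not available (indeed $h_{t_k}$ can vanish and then $\Sigma$ is ill-defined). Nor can you simply use \eqref{Pvsh} to reduce $\|h_{t_k}\|^{-1}\|P_{t_k,t_{k+1}}\|$ to $L_g(\|x_{t_k,t_{k+1}}\|+\cdots)$: the resulting sum over $k$ is exactly the H\"older-type sum you already (correctly) ruled out, and the map $(s,t)\mapsto\|h_s\|^{-1}\|P_{s,t}\|$ is not a candidate for the sewing lemma since it involves a norm of the first-parameter trace rather than a genuine bilinear or additive structure in $(s,t)$.

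The paper resolves this by \emph{not} dividing: it keeps the additive recursion and telescopes only the deterministic factors $\beta_j=2M(\ta^\Delta_{t_j})\Delta+L_f^2\Delta^2$, arriving at
\[
\|h_{t_n}\|^2\leq \exp\Bigl\{\textstyle\sum_{k<n}\beta_k\Bigr\}\|h_{\tau^\Delta_0}\|^2+\sum_{k<n}\exp\Bigl\{\textstyle\sum_{j=k+1}^{n-1}\beta_j\Bigr\}G_{t_k,t_{k+1}}+\text{(cross term)},
\]
with $G_{s,t}=2\langle h_s,P_{s,t}\rangle+\|P_{s,t}\|^2$. It then sews $F_{s,t}=\exp\{\sum_{j>s/\Delta}\beta_j\}\,G_{s,t}$, whose weight is a deterministic exponential bounded between $e^{\mp(6\lambda+\lambda^2)}$ independently of $h$. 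The sewing lemma gives an upper bound $\lesssim\|h\|_{\infty}^2L_g\ltn\bx\rtn_{\tp}\lesssim\|h_{\tau^\Delta_0}\|^2L_g\ltn\bx\rtn_{\tp}$ using only the upper bound on $\|h\|_\infty$, and the additive estimate is finally turned into the desired exponential form $A+B\leq A\exp(B/A)$ using the lower bound $A\geq e^{-(6\lambda+\lambda^2)}$ on the \emph{deterministic} exponential factor, not on $\|h\|$. This is the structural point your proposal is missing; once you replace the multiplicative $\log(1+Q_k)$ telescoping by this additive bookkeeping and sew the weighted process $F$ instead of the unweighted $\Xi^{(i)}$, the rest of your outline goes through.
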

The proof of Proposition \ref{case1} is provided in the Appendix. Hence in both cases, we have have just proved that
		\begin{eqnarray}
			\|h_{\tau^\Delta_1}\| 
          		&\leq&   \exp \Big\{\Big[\bar{M}_0+\frac{1}{2}L_f^2\Delta\Big](\tau^\Delta_1-\tau^\Delta_0)   +KL_g\ltn \bx\rtn_{\tp,[\tau^\Delta_0,\tau^\Delta_1]}\Big\}  \|h_{\tau^\Delta_0}\| \notag\\
          		&\leq& \exp \Big\{\Big[\bar{M}_0+\frac{1}{2}L_f^2\Delta\Big](\tau^\Delta_1-\tau^\Delta_0)  +KL_g \gamma^*  N^*(\gamma^*,\bx,[\tau^\Delta_0,\tau^\Delta_1])\Big\}  \|h_{\tau^\Delta_0}\|\notag
      		 \end{eqnarray}  
for 
\begin{equation}\label{constantK}
K:=5(1+\lambda)(1+2C_p)^2e^{12\lambda+2\lambda^2} \vee \frac{3}{2}e^{6L_f\Delta}  
\end{equation}
 and arbitrary $\gamma^* \in(0,1)$,  where the last estimate follows from Lemma \ref{infN}.  By induction, we therefore can prove that.  

\begin{proposition}\label{z}
If $\|h_{\tau^\Delta_{n}} \|\leq \frac{r}{16 (1+C_p)} $ then
		\begin{eqnarray} 
			\|h_{\tau^\Delta_{n+1}}\|
    		      &\leq& \exp \Big\{ \Big[\bar{M}_n+\frac{1}{2}L_f^2\Delta\Big](\tau^\Delta_{n+1}-\tau^\Delta_n)  + KL_g \gamma^*  N^*(\gamma^*,\bx,[\tau^\Delta_n,\tau^\Delta_{n+1}])\Big\}   \|h_{\tau^\Delta_n}\|
 	      \end{eqnarray} 
where $\bar{M}_n$ denotes the average of $M(\ta^\Delta_{t_k}),\ t_k\in [\tau^\Delta_n,\tau^\Delta_{n+1})$ and $\gamma^*\in(0,1)$. 
\end{proposition}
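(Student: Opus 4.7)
The plan is to observe that Proposition \ref{z} is essentially the one-step bound already established for the initial interval $[\tau^\Delta_0,\tau^\Delta_1]$ in the paragraphs preceding its statement, transplanted to the generic interval $[\tau^\Delta_n,\tau^\Delta_{n+1}]$ via the cocycle / shift structure. First I would apply the shift $\theta_{\tau^\Delta_n}$: since \eqref{REuler} is autonomous and $\ta^\Delta$ is a stationary solution, the restarted Euler scheme on $[\tau^\Delta_n,\infty)$ with initial value $y^\Delta_{\tau^\Delta_n}$ and reference $\ta^\Delta_{\tau^\Delta_n}=\ta^\Delta(\theta_{\tau^\Delta_n}\omega)$ satisfies the same difference equation \eqref{h1}, and the stopping times $\tau^\Delta_n,\tau^\Delta_{n+1}$ become the first two elements of the corresponding greedy sequence built from the controls $\bw^{(1)},\dots,\bw^{(4)}$ for the shifted path. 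The hypothesis $\|h_{\tau^\Delta_n}\|\leq \frac{r}{16(1+C_p)}$ now plays the role that $\|h_{\tau^\Delta_0}\|\leq \frac{r}{16(1+C_p)}$ played in the base interval.

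Next I would split into the same two dichotomies used before Proposition \ref{case1}. In the trivial case, where the stopping criterion forces $\tau^\Delta_{n+1}-\tau^\Delta_n=\Delta$, the single-step inequality \eqref{eqRDE:02} together with \eqref{Pvsh} yields, after Taylor expansion of the exponential,
\[
\|h_{\tau^\Delta_{n+1}}\|^2\leq \exp\Big\{\big[2M(\ta^\Delta_{\tau^\Delta_n})+L_f^2\Delta\big](\tau^\Delta_{n+1}-\tau^\Delta_n)+3e^{6L_f\Delta}L_g\ltn\bx\rtn_{\tp,[\tau^\Delta_n,\tau^\Delta_{n+1}]}\Big\}\|h_{\tau^\Delta_n}\|^2,
\]
and in this case $\bar M_n=M(\ta^\Delta_{\tau^\Delta_n})$ trivially since the averaging runs over a single time point. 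In the non-trivial case, the stopping criterion defining $\tau^\Delta_{n+1}$ guarantees that the hypothesis of Proposition \ref{case1} holds on $[\tau^\Delta_n,\tau^\Delta_{n+1}]$ for the shifted system, so Proposition \ref{case1} applied to the restarted scheme delivers exactly \eqref{ind} with $0,1$ replaced by $n,n+1$ and with the averaged $\bar M_n$ of $M(\ta^\Delta_{t_k})$ over $t_k\in[\tau^\Delta_n,\tau^\Delta_{n+1})$.

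Taking square roots and amalgamating the two cases under the common constant $K$ defined in \eqref{constantK} gives
\[
\|h_{\tau^\Delta_{n+1}}\|\leq \exp\Big\{\big[\bar M_n+\tfrac12 L_f^2\Delta\big](\tau^\Delta_{n+1}-\tau^\Delta_n)+KL_g\ltn\bx\rtn_{\tp,[\tau^\Delta_n,\tau^\Delta_{n+1}]}\Big\}\|h_{\tau^\Delta_n}\|.
\]
Finally I would invoke Lemma \ref{infN} to replace $\ltn\bx\rtn_{\tp,[\tau^\Delta_n,\tau^\Delta_{n+1}]}$ by $\gamma^* N^*(\gamma^*,\bx,[\tau^\Delta_n,\tau^\Delta_{n+1}])$ for an arbitrary $\gamma^*\in(0,1)$, which is the target bound. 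The main (minor) obstacle is bookkeeping: one must verify that after the shift the definition of the averaged quantity $\bar M_n$, the controls $\bw^{(i)}$ and the threshold $\lambda<\tfrac12$ are preserved so that the hypothesis of Proposition \ref{case1} really applies verbatim; this is immediate from stationarity of $\ta^\Delta$, the relation \eqref{roughshift} for the rough path under $\theta$, and the fact that all of $\bw^{(1)},\dots,\bw^{(4)}$ are built from quantities that commute with the shift.
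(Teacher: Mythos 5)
Your proposal is correct and fills in exactly the argument the paper compresses into the phrase ``by induction'': the shift $\theta_{\tau^\Delta_n}$ together with stationarity of $\ta^\Delta$ and \eqref{roughshift} reduces the generic step to the base step already handled by the trivial/non-trivial dichotomy before Proposition \ref{case1} and by Proposition \ref{case1} itself, after which Lemma \ref{infN} converts $\ltn\bx\rtn_{\tp,[\tau^\Delta_n,\tau^\Delta_{n+1}]}$ into $\gamma^* N^*(\gamma^*,\bx,[\tau^\Delta_n,\tau^\Delta_{n+1}])$. This is the same route as the paper, just made explicit.
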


Note that \eqref{N*} yields the estimate for the number of the continuous stopping time $\tau_n(\lambda, \cS,[s,t])$ w.r.t. the set of control $\cS$ as
        \begin{eqnarray}\label{N_new}
			N^*(\lambda,\mathcal{S},[s,t])
			& <& 1 + \frac{4^{p-1}}{\lambda^p} \Big[L_f^p(t-s)^p +  (4C_p+1)^pL_g^p\ltn x \rtn^p_{\tp,\Pi[s,t]} + (4C_p+1)^pL_g^{p}\ltn \X \rtn^{q}_{\tq,\Pi[s,t]}\notag\\
            &&\qquad \qquad \qquad + 2^p\lambda^p  \|f(\ta^\Delta)\|^p_{\infty,\Pi[s,t]}(t-s) \Big]. \notag\\
			&<& 1 + \frac{4^{p-1}}{\lambda^p} \Big[L_f^p(t-s)^p +  (4C_p+1)^p L_g^p\ltn \bx \rtn^p_{\tp,\Pi[s,t]} +2^p\lambda^p  \|f(\ta^\Delta)\|^p_{\infty,\Pi[s,t]}(t-s)^p  \Big].\notag\\
		\end{eqnarray}
Now we state the main result of this section. 
\begin{theorem}\label{stable1}
Assume (${\textbf H}^+_{f}$) with \eqref{Lf}, (${\textbf H}_{g}$), (${\textbf H}_{X}$).  Assume further that   $\|\ta^\Delta(\cdot)\| \in L^p$ and 
      \begin{equation}\label{cond.}
      		-\eta:= \E \ell(f,\ta^\Delta(\cdot))  + \frac{1}{2}L_f^2\Delta <0.
      \end{equation}
If there exist $\lambda \in (0,\frac{1}{2}), \gamma^* \in (0,1)$ such that
            \begin{eqnarray}\label{discstabcri.gen} 
				\eta>KL_g \gamma^*\Big[\E N^*(\gamma^*,\bx(\cdot),[0,1])+ 2 \E N^*(\lambda,\cS,[0,1])\Big],
			\end{eqnarray}
where $K$ is given in \eqref{constantK} and numbers $N^*$s are estimated by \eqref{Nest}, \eqref{N_new}. Then the stationary solution $\ta^\Delta(\cdot)$ of \eqref{REuler} is exponentially stable almost surely.
\end{theorem}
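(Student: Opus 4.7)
The plan is to mirror the strategy of Theorem \ref{local_diss_attractor}, iterating the one-step discrete bound of Proposition \ref{z} across the discrete greedy stopping times $\tau_n^\Delta(\lambda,\cS,[0,\infty))$ and invoking two applications of Birkhoff's ergodic theorem. First I would perform an approximation step: thanks to (${\textbf H}^+_f$), \eqref{Lf} and $\|\ta^\Delta(\cdot)\|\in L^p$, the bound $\|Df\|_{\infty,B(\ta^\Delta(\omega),r)}\leq C_{f}(1+(\|\ta^\Delta(\omega)\|+r)^\rho)$ is $L^1$-integrable in $\omega$, so the dominated convergence theorem yields right-continuity at $r=0$ of the map $r\mapsto \E M(Df,\ta^\Delta(\cdot),r)$. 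Combining \eqref{cond.} and \eqref{discstabcri.gen} then produces $\epsilon>0$ and $r_0>0$ small enough that
\[
\E M(Df,\ta^\Delta(\cdot),r)+\tfrac{1}{2} L_f^2\Delta+KL_g\gamma^*\bigl[\E N^*(\gamma^*,\bx,[0,1])+2\E N^*(\lambda,\cS,[0,1])\bigr]\leq -\epsilon
\]
for every $r\leq r_0$; I then fix such a radius $r$.

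Given a solution $y^\Delta$ of \eqref{REuler} starting from $y_0^\Delta$ with $\|y_0^\Delta-\ta^\Delta(\omega)\|\leq R(\omega)$ (for a random radius $R(\omega)$ to be determined), I iterate Proposition \ref{z}. Under the inductive hypothesis $\|h_{\tau_k^\Delta}\|\leq r/(16(1+C_p))$ for $k\leq n-1$, this gives
\[
\|h_{\tau_n^\Delta}\|\leq \|h_0\|\exp\Bigl\{\sum_{k=0}^{n-1}\bigl[\bar{M}_k+\tfrac{1}{2} L_f^2\Delta\bigr](\tau_{k+1}^\Delta-\tau_k^\Delta)+KL_g\gamma^*\sum_{k=0}^{n-1} N^*(\gamma^*,\bx,[\tau_k^\Delta,\tau_{k+1}^\Delta])\Bigr\},
\]
where the trivial-interval case $\tau_{k+1}^\Delta-\tau_k^\Delta=\Delta$ is absorbed into the same form since the constant $K$ in \eqref{constantK} is defined as the maximum of the two single-step constants.

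Letting $n\to\infty$, I apply ergodic averaging twice. By stationarity of $\ta^\Delta(\cdot)$, the first sum equals $\Delta\sum_{t_j<\tau_n^\Delta}M(\ta^\Delta(\theta_{t_j}\omega))$, and the discrete-time Birkhoff theorem gives the asymptotic density $\E M(Df,\ta^\Delta(\cdot),r)\cdot\tau_n^\Delta$. For the second sum, Lemma \ref{Nsumin} bounds it by $N^*(\gamma^*,\bx,[0,\tau_n^\Delta])+n$; the continuous Birkhoff theorem yields growth $\E N^*(\gamma^*,\bx,[0,1])\cdot\tau_n^\Delta$ for the first piece, while $n/\tau_n^\Delta$ has upper limit bounded by $2\E N^*(\lambda,\cS,[0,1])$ by combining the discrete/continuous comparison \eqref{disinfD} with the control version \eqref{infDS} of Lemma \ref{infimumD}. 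Putting these together, the exponent is eventually bounded by $-\tfrac{\epsilon}{2}\tau_n^\Delta$, so $\|h_{\tau_n^\Delta}\|$ decays exponentially in $\tau_n^\Delta$.

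Finally, I would define $R(\omega)$ as the infimum over $n$ of the factors required to maintain the inductive hypothesis $\|h_{\tau_k^\Delta}\|\leq r/(16(1+C_p))$; by the previous paragraph this infimum is attained at finitely many $n$, so $R(\omega)>0$ a.s. To upgrade stopping-time decay to decay on the full grid $\Pi$, I apply Proposition \ref{hnew} on each interval $[\tau_n^\Delta,\tau_{n+1}^\Delta]$, which gives $\max_{t_j\in[\tau_n^\Delta,\tau_{n+1}^\Delta]}\|h_{t_j}\|\leq (1+\lambda)\|h_{\tau_n^\Delta}\|$ and closes both stability and exponential decay with rate $\mu=\epsilon/2$. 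The main obstacle will be the discrete Birkhoff step---ergodicity of the sampled shift $\theta_\Delta$ is not automatic from ergodicity of $\{\theta_t\}_{t\in\R}$, but for the fractional Brownian setting it holds via Lemma \ref{ergodicity}---together with cleanly splicing the continuous and discrete stopping-time asymptotics to justify the sharp factor of $2$ appearing in \eqref{discstabcri.gen}.
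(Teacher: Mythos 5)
Your proposal follows essentially the same route as the paper's proof: shrink $r$ via dominated convergence so that the quantitative criterion survives the replacement $\ell\mapsto M(\cdot,r)$, iterate Proposition \ref{z}, absorb the $N^*$-sum with Lemma \ref{Nsumin}, run the Birkhoff argument through \eqref{infD}, \eqref{disinfD}, \eqref{infDS}, take $R(\omega)$ as the resulting infimum, and patch in the off-stopping-time grid points via Proposition \ref{hnew}. The only slip is the final factor: \eqref{hcase1} gives $\|h\|_{\infty,\Pi[\tau_n^\Delta,\tau_{n+1}^\Delta]}\leq\frac{1}{1-\lambda}\|h_{\tau_n^\Delta}\|\leq 2\|h_{\tau_n^\Delta}\|$, not $(1+\lambda)\|h_{\tau_n^\Delta}\|$, but this only changes a harmless constant in $\alpha(\omega)$.
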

\begin{proof}
First, choose $0<r<r_0$ small enough so that 
\[
        \eta-\E[osc(Df(\ta^\Delta(\cdot)))_r ]>KL_g \gamma^*\Big[\E N^*(\gamma^*,\bx(\cdot),[0,1])+ 2 \E N^*(\lambda,\cS,[0,1])\Big].
\]
Then \eqref{M} is satisfied in which 
\[
       -\left[\E M(Df,\ta^\Delta(\cdot),r)+\frac{1}{2}L_f^2\Delta\right]>  KL_g \gamma^*\Big[\E N^*(\gamma^*,\bx(\cdot),[0,1])+ 2 \E N^*(\lambda,\cS,[0,1])\Big].
\]
Using Proposition \ref{z} and the definition of $\bar{M}_m$ for $0\leq m\leq n$, we obtain by induction   
  		\begin{equation}\label{ydiscest_gen}
				\|h_{\tau_n^\Delta}\|
				 \leq \|h_0\| \exp \left\{ \frac{\sum_{i=0}^{\frac{\tau^\Delta_n}{\Delta}-1} \left[M(\ta^\Delta_{t_i}) +\frac{1}{2}L_f^2\Delta \right]}{\tau^\Delta_n/\Delta}\tau_n^\Delta  +K L_g \gamma^*\sum_{j =0}^{n-1} N^*(\gamma^*,\bx,[\tau^\Delta_j,\tau^\Delta_{j+1}])\right\}
			\end{equation}
provided that  $\|h^\Delta_{\tau_m^\Delta}\| \leq  \dfrac{r}{16(1+C_p)} $. By applying \eqref{Nsum} in Lemma \ref{Nsumin} to \eqref{ydiscest_gen}, we obtain
			\begin{eqnarray}\label{ydiscest2}
				\|h_{\tau_n^\Delta}\| &\leq& \|h_0\| \exp \left\{ \frac{\sum_{i=0}^{\frac{\tau^\Delta_n}{\Delta}-1} \alpha_i}{\tau^\Delta_n/\Delta}\tau_n^\Delta  +K L_g \gamma^* N^*(\gamma^*,\bx,[0,\tau^\Delta_{n}])  + KL_g\gamma^* n      \right\},
			\end{eqnarray}
where we write $\alpha_i=M(\ta^\Delta_{t_i}) + \frac{1}{2}L_f^2\Delta$ in short. Hence, to make sure that $\|h_{\tau_n^\Delta}\| \leq \frac{r}{16(1+C_p) }$ for any $n\in \N$ it suffices to chooses $y^\Delta_0$ such that
			\begin{eqnarray}\label{discR}
				&&\|y^\Delta_0 - a^\Delta(\omega)\| \leq R^\Delta(\omega) \notag\\
				&:=&   \frac{r}{16(1+C_p)} \inf_{n \geq 1} \Big[ \exp \Big\{-\frac{ \sum_{i=0}^{\frac{\tau^\Delta_n}{\Delta}-1}    \alpha_i }{\tau^\Delta_n/\Delta}\tau_n^\Delta   - KL_g \gamma^*N^*(\gamma^*,\bx(\omega),[0,\tau^\Delta_{n}]) - KL_g \gamma^* n \Big\}\Big] \notag\\
				&=& \frac{r}{16(1+C_p)} \inf_{n \geq 1} \left[ \exp \Big\{-\tau^\Delta_n \Big(\frac{\sum_{i=0}^{\frac{\tau^\Delta_n}{\Delta}-1}\alpha_i}{\tau^\Delta_n/\Delta} - KL_g \gamma^* \frac{N^*(\gamma^*,\bx(\omega),[0,\tau^\Delta_{n}])}{\tau_n^\Delta}   - KL_g\gamma^* \frac{n}{\tau_n^\Delta} \Big)\Big\} \right].\notag\\
			\end{eqnarray}
We show that $R>0$ a.s. under conditions  \eqref{cond.} and \eqref{discstabcri.gen}. Indeed, denote $\tau_k(\gamma^*,\bx(\omega),[0,\infty))$ the maximal stopping time that is less than $\tau^\Delta_{n} =\tau^\Delta_{n}(\lambda,\cS)$, then $N^*(\gamma^*,\bx(\omega), [0,\tau^\Delta_{n}]) \leq k+1$. By \eqref{infD} in Lemma \ref{infimumD}
			\begin{equation}\label{expN}
				\limsup \limits_{n \to \infty} \frac{N^*(\gamma^*,\bx(\omega), [0,\tau^\Delta_{n}])}{\tau_n^\Delta} \leq \limsup \limits_{k \to \infty} \frac{k+1}{\tau_k(\gamma^*,\bx(\omega),[0,\infty))} \leq \E N^*(\gamma^*,\bx(\cdot),[0,1]).
			\end{equation}
Mean while, applying \eqref{disinfD} and \eqref{infDS} for $\gamma := \lambda$ yields
			\[
			\limsup \limits_{n \to \infty} \frac{n}{\tau_n^\Delta(\lambda, \cS,[0,\infty))} \leq \frac{1}{\liminf \limits_{n \to \infty} \frac{\tau_n(\lambda,\cS,[0,\infty))}{2n} } \leq2 \E N^*(\lambda,\cS,[0,1])\quad \text{a.s.}
			\]
Thus for a given path $\bx(\omega)$ and a fixed parameter
\[
\eps< -\left[\E M(Df,\ta^\Delta(\cdot),r)+\frac{1}{2}L_f^2\Delta\right] - KL_g\gamma^* \E N^*(\gamma^*,\bx(\cdot),[0,1])- 2KL_g\gamma^* \E N^*(\lambda,\cS,[0,1])
\]
there exists $n(\omega)>0$ large enough such that
    \begin{align}
         &\frac{r}{16(1+C_p)} \inf_{n \geq n(\omega)} \left[ \exp \Big\{\tau^\Delta_n \Big(\frac{\sum_{i=0}^{\frac{\tau^\Delta_n}{\Delta}-1}\alpha_i}{\tau^\Delta_n/\Delta} - KL_g \gamma^* \frac{N^*(\gamma^*,\bx,[0,\tau^\Delta_{n}])}{\tau_n^\Delta}   - KL_g\gamma^* \frac{n}{\tau_n^\Delta} \Big)\Big\} \right].\notag\\
        & \geq\frac{r}{16(1+C_p)} \inf_{n \geq n(\omega)} e^{\eps \tau^\Delta_n} >0.       
    \end{align}
This implies that $R(\omega)>0$ a.s. These arguments together with similar arguments to the continuous case presented in the proof of Theorem \ref{local_diss_attractor} prove that there exist $\mu>0$, and positive random variable $\alpha(\omega)$ such that
 if $\|y_0-\ta^\Delta(\omega)\|\leq R(\omega)$ then
			\[
			\|y^\Delta_{\tau^\Delta_n}-\ta^{\Delta}_{\tau^\Delta_n}\| \leq \alpha(\omega) \exp (- \mu \tau^\Delta_n) \quad \hbox{for all}\quad n\geq 0,
			\]
Finally, for any $\tau^\Delta_n<t_k<\tau^\Delta_{n+1}$ for some $n$, \eqref{hcase1} implies
            \[
			\|y^\Delta_{t_k}-\ta^{\Delta}_{t_k}\| \leq 2 \alpha(\omega) \exp (- \mu \tau^\Delta_n) \leq  2 \alpha(\omega) e^{\mu(t_k-\tau^\Delta_n))} \exp (- \mu t_k)\leq  2 \alpha(\omega) e^{\mu\frac{\lambda}{L_f}} \exp (- \mu t_k).
			\]
This proves the exponential stability of $\ta^\Delta(\omega)$. 
\end{proof}

        \begin{remark}\label{specialcase}
\begin{enumerate}
\item We expect that condition \eqref{Lf} is negligible provided (${\textbf H}^+_{f}$), as we only consider the local dynamics in the vicinity of the discrete stationary solution $\ta^\Delta$. The only difference in computation is the estimate
 \[
\| f(y^\Delta_s) -f(\ta^\Delta_s) \| \leq \left\|\int_0^1 Df(\ta^\Delta_s + \chi h_s)d\chi \right\|\|h_s\| \leq C_f\Big[1+(\|\ta^\Delta_s\|+1)^\rho\Big] \|h_s\| =: C(f,\ta^\Delta_s)\|h_s\|
 \]
 as long as $\|h_s\|\leq 1$, where $ C(f,\ta^\Delta)$ is an integrable coefficient. In this case the simple term $L_f(t-s)$ should be replaced by $\int_s^t C(f,\ta^\Delta_u)du$, thus the formulae and the stability criterion might be more complex in the end. 
    \item Observe from the proof of Proposition \ref{hnew} that in case $\ta^\Delta$ is a non-random fixed point then $\ltn \ta^\Delta, R^{\ta^\Delta}\rtn_{\tp,\Pi[a,b]}=0$. The definition of $L_g$ can then be reduced to 
    \[
    L_g=\max\{\|Dg\|_\infty,(\|g\|_\infty\|D^2g\|_\infty)^{\frac{1}{2}}\}.
    \]
      
 \item Criterion \eqref{discstabcri.gen} for discrete systems is similar to criterion \eqref{Anegdef}  or \eqref{Anegdef_new}  for continuous systems, with $\kappa$ in the left hand side of \eqref{Anegdef_new} is reduced to $\ell$; the only difference is $N^*(\lambda,\cS,[0,1])$ in the right hand side of \eqref{discstabcri.gen}. In particular, all the controls in $\cS$ depend only on $\bx$ and $\|f(\ta^\Delta)\|_\infty$ that are computable in practice, hence no further information on $g$ than $L_g$ is needed.

\item In general  it is difficult to check \eqref{discstabcri.gen}, because the number $N^*(\lambda,\cS,[0,1])$ in the right hand side of \eqref{discstabcri.gen} might also depend on $L_g$ (as seen in the estimate \eqref{N_new}). A careful look at \eqref{discstabcri.gen} and \eqref{N_new} shows that if there exists an $L^{p}$- random variable which bounds $E\ltn \bz\rtn_{\infty,\Pi[0,1]}$, then the right hand side of \eqref{discstabcri.gen} does not depend on $\ta^\Delta$. This observation suggests us to consider two special cases for the dissipative drift as below.
 \end{enumerate} 
\end{remark}

\medskip
In the following, we assume $f$ is global dissipative in the sense of \eqref{globaldiss}, i.e. there exist constants $D_1, D_2>0$ so that
    \begin{equation*}
        \langle y, f(y) \rangle \leq  D_1- D_2\|y\|^2,\quad \forall y \in \R^d.
    \end{equation*} 
We modify the proof in \cite[Theorem \ 3.3 \& 4.2]{congduchong23} to prove that
\begin{proposition}\label{attractor}
   Assume (${\textbf H}^+_{f}$) with \eqref{Lf}, (${\textbf H}_{g}$), (${\textbf H}_{X}$). Assume more \eqref{globaldiss}. There exists $\Delta^*$ so that if $\Delta<\Delta^*$, there exists a pullback random attractor $\mathcal{A}^\Delta(\omega)$ to \eqref{REuler} which lies inside the ball $B(0,\mathcal{R}^{1/p}(\omega))$, where
    \begin{equation}\label{radius}
			\mathcal{R}=\mathcal{R}(\omega)
            = \Gamma+ \Gamma \sum_{k=0}^{\infty} e^{ \frac{-D_2}{8}k }\left(1+\ltn \theta_{-k} \bx(\omega)\rtn^{p(p+2)}_{\tp,[-1,2]} \right)
		\end{equation}
in which $\Gamma= \Gamma(L_f, \|f(0)\|, D_1,D_2, \|g\|_{C^3_b})$ is a constant dependent increasingly on $\|g\|_{C^3_b}$.
\end{proposition}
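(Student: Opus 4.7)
The plan is to establish the existence of a random pullback absorbing set of the form $B(0,\mathcal{R}^{1/p}(\omega))$ for the cocycle generated by \eqref{REuler}, after which existence of a pullback random attractor follows from the standard theorem for discrete random dynamical systems (the version used in \cite{congduchong23}). The argument splits into a one-step dissipative estimate, a greedy-stopping-time telescoping, and a pullback limit.

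First I would derive a one-step Lyapunov bound. Squaring \eqref{REuler}, using the dissipativity \eqref{globaldiss} on the drift-inner-product term, the Lipschitz bound \eqref{Lf} on $\|f\|^2$, and the pointwise bounds $\|g(y^\Delta_{t_k})x_{t_k,t_{k+1}}+Dg(y^\Delta_{t_k})g(y^\Delta_{t_k})\X_{t_k,t_{k+1}}\|\leq \|g\|_\infty\|x_{t_k,t_{k+1}}\|+\|Dg\|_\infty\|g\|_\infty\|\X_{t_k,t_{k+1}}\|$, one obtains, provided $\Delta<\Delta^*$ is chosen small enough so that $1-2D_2\Delta+L_f^2\Delta^2\leq e^{-D_2\Delta}$,
\begin{equation*}
\|y^\Delta_{t_{k+1}}\|^2 \leq e^{-D_2\Delta}\|y^\Delta_{t_k}\|^2+C_0\Delta+\Psi\bigl(\|x_{t_k,t_{k+1}}\|,\|\X_{t_k,t_{k+1}}\|,\|y^\Delta_{t_k}\|\bigr),
\end{equation*}
where $\Psi$ is a polynomial of degree at most two in $\|y^\Delta_{t_k}\|$ collecting the cross-terms and $C_0=C_0(D_1,D_2,\|f(0)\|,\|g\|_{C^3_b})$.

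Second I would iterate on the discrete greedy stopping intervals $[\tau^\Delta_n,\tau^\Delta_{n+1}]$ from Subsection~\ref{dis_greedy} built from the control $\ltn \bx\rtn^{p}_{\tp,\Pi[\cdot,\cdot]}$ with a fixed threshold. On each such interval the rough path norm is uniformly bounded by the threshold, so repeated application of the one-step bound combined with Young's inequality to split $\Psi\leq \tfrac{D_2}{4}\Delta\|y^\Delta_{t_k}\|^2+\text{(noise polynomial)}$ yields a contraction
\begin{equation*}
\|y^\Delta_{\tau^\Delta_{n+1}}\|^2 \leq e^{-\frac{D_2}{4}(\tau^\Delta_{n+1}-\tau^\Delta_n)}\|y^\Delta_{\tau^\Delta_n}\|^2+\Gamma\bigl(1+\ltn \bx \rtn^{p(p+2)}_{\tp,[\tau^\Delta_n,\tau^\Delta_{n+1}]}\bigr).
\end{equation*}
Iterating over $n$, evaluating $\varphi^\Delta(k,\theta_{-k}\omega)y_0$, using the stationarity of $\bx$ under $\theta$ and the estimate \eqref{disinfD} which prevents the number of stopping times from exploding, gives
\begin{equation*}
\|\varphi^\Delta(k,\theta_{-k}\omega)y_0\|^p \leq e^{-\frac{D_2}{8}k}\|y_0\|^p+\Gamma\sum_{j=0}^{k}e^{-\frac{D_2}{8}j}\bigl(1+\ltn\theta_{-j}\bx(\omega)\rtn^{p(p+2)}_{\tp,[-1,2]}\bigr).
\end{equation*}
Letting $k\to\infty$ identifies $\mathcal{R}(\omega)$ as in \eqref{radius}; convergence of the series follows from stationarity of $\bx$ together with the Gaussian-type moment bounds from assumption $(\mathbf{H}_X)$ together with the fact that $e^{-D_2 j/8}$ decays geometrically.

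The main obstacle is the polynomial growth of the remainder $\Psi$ in $\|y^\Delta_{t_k}\|$: absorbing it into the dissipative factor $e^{-D_2\Delta}$ via Young's inequality costs degrees of the rough path norm, and one must show the compounding across an entire greedy interval (whose length is bounded by the threshold divided by the local rough path norm) produces at worst the exponent $p(p+2)$ appearing in \eqref{radius}; this is where the argument genuinely modifies \cite[Thm.\ 3.3 \& 4.2]{congduchong23} to incorporate the full rough driver $(x,\X)$ rather than a Young driver. Once the absorbing set is in place and shown to be a.s.\ compactly contained in a ball of radius $\mathcal{R}^{1/p}(\omega)$, the standard pullback attractor construction (intersection of forward images of the absorbing set) produces $\mathcal{A}^\Delta(\omega)\subset B(0,\mathcal{R}^{1/p}(\omega))$, completing the proof.
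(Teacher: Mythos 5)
Your plan to run a direct one-step Lyapunov iteration on $\|y^\Delta_{t_k}\|^2$ has a genuine gap that the paper's proof is built specifically to avoid. After squaring \eqref{REuler}, the linear-in-$y$ cross term $2\langle y^\Delta_{t_k}, P_{t_k,t_{k+1}}\rangle$ with $P_{t_k,t_{k+1}}=g(y^\Delta_{t_k})x_{t_k,t_{k+1}}+Dg(y^\Delta_{t_k})g(y^\Delta_{t_k})\X_{t_k,t_{k+1}}$ must be absorbed into the dissipation; Young's inequality with the weight $\tfrac{D_2}{4}\Delta$ that you propose introduces a $\Delta^{-1}$ factor in the noise remainder, giving a per-step contribution of order $\frac{1}{\Delta}\|x_{t_k,t_{k+1}}\|^2$. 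Summing over the $m$ Euler steps inside a greedy interval produces $\frac{1}{\Delta}\sum_k\|x_{t_k,t_{k+1}}\|^2$. For a $\nu$-H\"older driver this is of order $m\Delta^{2\nu-1}\sim T\Delta^{2\nu-2}\to\infty$ as $\Delta\to 0$ since $\nu<1$; equivalently, when $p>2$ the squared-increment sum is not controlled by $\ltn x\rtn^2_{\tp,\Pi[\cdot,\cdot]}$ independently of the mesh. No choice of Young parameter fixes this: the $\|y\|^2$-coefficient must be $O(\Delta)$ per step to match the dissipation, while the noise remainder is summed per step.

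The paper circumvents this by coupling: it introduces the deterministic Euler iterate $\mu_{t_{k+1}}=\mu_{t_k}+f(\mu_{t_k})\Delta$, $\mu_0=y^\Delta_0$, which satisfies the clean decay $\|\mu_T\|\leq e^{-D_2T/2}\|\mu_0\|+C(f)$ and a $1$-variation bound linear in $\|\mu_0\|$, and then estimates the difference $h=y^\Delta-\mu$ via the discrete sewing lemma from \cite{congduchong23}. The sewing lemma is precisely what replaces the mesh-dependent per-step noise accumulation by the $p$-variation norm $\ltn\bx\rtn_{\tp,\Pi[s,t]}$ on the whole greedy interval, yielding $\|h\|_{\infty,\Pi[0,T]}$ controlled by a small multiple of $\|\mu_0\|$ plus a noise term of order $\ltn\bx\rtn^{p+2}_{\tp,[0,T]}$; the triangle inequality and raising to power $p$ then give the contraction you want, with exponent $p(p+2)$. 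You gesture at this difficulty without resolving it. Your telescoping, stationarity and pullback-limit steps are fine once the coupling and sewing lemma are inserted, but as written the core one-step estimate does not close uniformly in $\Delta$.
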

   \begin{proof}
       The proof follows that in \cite[Theorem\ 3.3 \& 4.2]{congduchong23} and \cite[Theorem \ 3.11]{duchong21} with a slight modification to make use assumption boundedness of $g$. For the benefit of the reader, we sketch the proof in the appendix.
   \end{proof}
\medskip

\begin{corollary}\label{rhs}
Under the assumptions of Proposition \ref{attractor}, it holds that 
    \[
    \E N^*(\gamma^*,\bx(\cdot),[0,1])+ 2 \E N^*(\lambda,\cS,[0,1]) \leq \bar{\Gamma} \left[1+ \E(\ltn \bx(\cdot)\rtn^{p(p+2)}_{\tp,[0,1]}) \right]
    \] 
for a certain constant $\bar{\Gamma}=\bar{\Gamma}(L_f, \|f(0)\|, D_1,D_2,\|g\|_{C^3_b},\lambda,\gamma^*) >0$ dependent increasingly on $\|g\|_{C^3_b}$.
\end{corollary}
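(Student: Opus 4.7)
The plan is to bound each of the two expected counting numbers appearing on the left-hand side separately, then reduce both estimates to $\E\ltn\bx(\cdot)\rtn^{p(p+2)}_{\tp,[0,1]}$. For the first term, the crude estimate \eqref{Nest} gives
\[
\E N^*(\gamma^*,\bx(\cdot),[0,1]) \leq 1 + \frac{1}{(\gamma^*)^p}\E \ltn\bx(\cdot)\rtn^p_{\tp,[0,1]},
\]
and the elementary inequality $a^p \leq 1 + a^{p(p+2)}$ for $a \geq 0$ puts this into the desired form. For the second term, \eqref{N_new} at $[s,t]=[0,1]$ reduces the task to controlling $\E\ltn\bx\rtn^p_{\tp,[0,1]}$ (handled as above) and $\E\|f(\ta^\Delta)\|^p_{\infty,\Pi[0,1]}$, which is the delicate piece.

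The strategy for handling the supremum over the discrete time set is to use the attractor inclusion of Proposition \ref{attractor}: since $\ta^\Delta$ is stationary and takes values in $\mathcal{A}^\Delta(\omega)\subset B(0,\mathcal{R}^{1/p}(\omega))$, and the global Lipschitz bound $L_f = \|Df\|_\infty < \infty$ yields $\|f(y)\|^p \leq 2^{p-1}(\|f(0)\|^p + L_f^p \|y\|^p)$, we obtain
\[
\|f(\ta^\Delta)\|^p_{\infty,\Pi[0,1]} \leq 2^{p-1}\Big(\|f(0)\|^p + L_f^p \sup_{t\in[0,1]}\mathcal{R}(\theta_t\omega)\Big).
\]
The main obstacle is then to estimate $\E\sup_{t\in[0,1]}\mathcal{R}(\theta_t\omega)$, since stationarity cannot be applied directly through a supremum. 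This will be overcome by exploiting the explicit series form \eqref{radius}: pulling the sup inside the sum and noting that for $t\in[0,1]$ the path $\theta_{-k+t}\bx$ restricted to $[-1,2]$ equals $\bx$ restricted to $[t-k-1,t-k+2] \subset [-k-1,-k+3]$, we get
\[
\sup_{t\in[0,1]}\ltn \theta_{-k+t}\bx\rtn^{p(p+2)}_{\tp,[-1,2]} \leq \ltn \bx\rtn^{p(p+2)}_{\tp,[-k-1,-k+3]}.
\]
By stationarity of the Wiener-type shift this random variable has the same law as $\ltn\bx\rtn^{p(p+2)}_{\tp,[0,4]}$, and splitting $[0,4]$ into four unit subintervals via Lemma~2.1 of \cite{congduchong17} (followed by a standard power-mean inequality to move the $(p+2)$-th power inside the sum) bounds its expectation by a universal combinatorial multiple of $\E\ltn\bx\rtn^{p(p+2)}_{\tp,[0,1]}$. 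The geometric factor $e^{-D_2 k/8}$ then makes the resulting series summable and yields
\[
\E\sup_{t\in[0,1]}\mathcal{R}(\theta_t\omega) \leq C'\Gamma\Big(1+\E\ltn\bx\rtn^{p(p+2)}_{\tp,[0,1]}\Big).
\]

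Combining the three estimates produces the required inequality with $\bar\Gamma$ a polynomial in $L_f$, $\|f(0)\|$, $D_2^{-1}$, $\lambda^{-1}$, $(\gamma^*)^{-1}$, $C_p$, $L_g$ and $\Gamma$. Its monotonicity in $\|g\|_{C^3_b}$ is preserved because $L_g$ is dominated by $\|g\|_{C^3_b}$ up to a constant, and $\Gamma$ is itself increasing in $\|g\|_{C^3_b}$ by Proposition \ref{attractor}. Apart from the sup-then-expectation issue isolated above, every remaining step amounts to routine manipulations of $p$-variation norms and geometric sums.
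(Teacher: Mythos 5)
Your proof is correct, but it follows a genuinely different route from the paper's. The paper avoids the sup-over-time-of-$\mathcal{R}$ issue entirely: it uses the pathwise evolution estimate (coming from \eqref{mu} and \eqref{h_inf} in the proof of Proposition~\ref{attractor}) to obtain $\|\ta^\Delta\|_{\infty,\Pi[0,1]} \leq 2\|\ta^\Delta(\omega)\| + \Lambda(\bx,[0,1])$ pathwise, i.e.\ the supremum of the stationary solution over the discrete time window is controlled by its value at time $0$ plus a noise term that is already visible as the $k=0$ summand of $\mathcal{R}(\omega)$. This lets the paper express \emph{all three} quantities --- $\|f(\ta^\Delta)\|^p_{\infty,\Pi[0,1]}$, $N^*(\gamma^*,\bx,[0,1])$, and $N^*(\lambda,\cS,[0,1])$ --- as bounded by $\bar\Gamma\,\mathcal{R}(\omega)$ for the single random variable $\mathcal{R}(\omega)$, and then compute $\E\mathcal{R}$ once via the geometric series and stationarity. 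You instead apply the attractor inclusion at every shifted fiber $\theta_t\omega$ and directly estimate $\E\sup_{t\in[0,1]}\mathcal{R}(\theta_t\omega)$ by pulling the supremum into the series, enlarging to $\ltn\bx\rtn^{p(p+2)}_{\tp,[-k-1,-k+3]}$, invoking stationarity of increments to identify each term in law with $\ltn\bx\rtn^{p(p+2)}_{\tp,[0,4]}$, and finally splitting $[0,4]$ into unit subintervals. Both arguments are sound and yield a $\bar\Gamma$ with the stated dependence (your $C_p$ is a universal constant fixed by $p$, and $L_g$ is dominated by $\|g\|_{C^3_b}$). What the paper's route buys is brevity and a single appeal to stationarity; what yours buys is independence from the coupling estimate \eqref{h_inf} --- you need only the attractor inclusion and the explicit series form of $\mathcal{R}$, which could make the argument more portable to settings where the coupling bound is not available.
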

\begin{proof}
Since stationary solution $\ta^\Delta$ belongs to $\mathcal{A}^\Delta$, Proposition \ref{attractor} implies 
    $$
        \|\ta^\Delta\|^p\leq \mathcal{R}.
    $$
Moreover, from \eqref{mu} and \eqref{h_inf} 
        \begin{eqnarray*}\label{sol.est}
			\|\ta^\Delta\|_{\infty,[0,T]}&\leq &2\|\ta^\Delta\|+ \Lambda(\bx,[0,1]))
		\end{eqnarray*}
where $\Lambda(\bx,[0,1])=\Gamma(1+ \ltn \bx \rtn^{(p+2)}_{\tp,[0,1]})$. 
By convex inequality, it follows that
     \begin{eqnarray*}
           \|f(\ta^\Delta)\|^p_{\infty,\Pi[0,1]}
          &\leq & \Gamma\mathcal{R}.
    \end{eqnarray*}
From \eqref{Nest} and \eqref{N_new}, there exists generic constant $\bar{\Gamma}=\bar{\Gamma}(L_f, \|f(0)\|, D_1,D_2,\|g\|_{C^3_b},\lambda,\gamma^*) >0$ such that
        \begin{eqnarray*}
			&& N^*(\gamma^*,\bx(\cdot),[0,1])+ 2 N^*(\lambda,\cS,[0,1])\\ 
			&<&  2 +\frac{1}{(\gamma^*)^p}\ltn \bx(\cdot) \rtn^p_{\tp,[0,1]}+ \frac{4^{p-1}}{\lambda^p} \Big[L_f^p +  (4C_p+1)^p L_g^p\ltn \bx (\cdot)\rtn^p_{\tp,\Pi[0,1]} +2^p\lambda^p  \|f(\ta^\Delta(\cdot))\|^p_{\infty,\Pi[s,t]} \Big].\notag\\
            &<&  \bar{\Gamma}\mathcal{R}.
		\end{eqnarray*}
By \eqref{radius}, it is easy to check that 
    \[
       \E(\mathcal{R}) \leq  \Gamma\left[1+ \dfrac{\E(\ltn \bx\rtn^{p(p+2)}_{\tp,[0,1]})}{1- e^{\frac{-D_2}{8}}} \right].
    \]
The proof is finished.
\end{proof}

As a consequence, we now derive a simpler stability criterion for the case of global dissipativity. 
\begin{theorem}\label{disccridiss}
Assume (${\textbf H}^+_{f}$) with \eqref{Lf} and \eqref{globaldiss}, (${\textbf H}_{g}$), (${\textbf H}_{X}$) such that \eqref{cond.} holds. If
 \begin{eqnarray}\label{discstabcri_gendis} 
				\eta> K\bar{\Gamma}  L_g  \left[1+ \E(\ltn \bx\rtn^{p(p+2)}_{\tp,[0,1]}) \right];
			\end{eqnarray}
where  $K$ is given in \eqref{constantK} and $\bar{\Gamma}$ is given in Corollary\ \ref{rhs}, then the stationary solution $\ta^\Delta(\cdot)$ of \eqref{REuler} is exponentially stable almost surely.
\end{theorem}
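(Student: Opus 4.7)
The statement is essentially a specialization of Theorem \ref{stable1} to the globally dissipative setting, using Proposition \ref{attractor} and Corollary \ref{rhs} to absorb the dependence on the stationary solution $\ta^\Delta$ into moments of the driving rough path alone. My plan proceeds in three short steps.

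First, I would verify the integrability hypothesis $\|\ta^\Delta(\cdot)\| \in L^p$ required by Theorem \ref{stable1}. The assumptions (${\textbf H}^+_f$), \eqref{Lf}, \eqref{globaldiss}, (${\textbf H}_g$) and (${\textbf H}_X$) are in force, so Proposition \ref{attractor} applies (for the step size $\Delta$ sufficiently small, which is already implicitly needed for \eqref{cond.} to yield $\eta>0$): there exists a pullback random attractor $\mathcal{A}^\Delta(\omega) \subset B(0,\mathcal{R}^{1/p}(\omega))$ with $\mathcal{R}$ as in \eqref{radius}. Since any stationary solution of \eqref{REuler} necessarily lies inside $\mathcal{A}^\Delta(\omega)$, we get the pointwise bound $\|\ta^\Delta(\omega)\|^p \leq \mathcal{R}(\omega)$. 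The moment estimate \eqref{conditionx} together with stationarity of $\bx$ under $\theta$ gives $\E\,\ltn \theta_{-k}\bx \rtn^{p(p+2)}_{\tp,[-1,2]} = \E\,\ltn \bx \rtn^{p(p+2)}_{\tp,[-1,2]} < \infty$, so the geometric series in \eqref{radius} yields $\E\mathcal{R}<\infty$, and hence $\ta^\Delta \in L^p$.

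Second, I would fix the parameters $\lambda \in (0,\tfrac{1}{2})$ and $\gamma^* \in (0,1)$ implicit in the constant $\bar{\Gamma}$ of the statement and apply Corollary \ref{rhs}. Multiplying the bound there by $KL_g\gamma^*$ gives
\[
KL_g \gamma^*\Big[\E N^*(\gamma^*,\bx(\cdot),[0,1])+ 2 \E N^*(\lambda,\cS,[0,1])\Big] \;\leq\; K\bar{\Gamma} L_g \gamma^* \Big[1+ \E \ltn \bx(\cdot) \rtn^{p(p+2)}_{\tp,[0,1]}\Big].
\]
Since $\gamma^* < 1$, the right-hand side is dominated by $K\bar{\Gamma} L_g \big[1+\E \ltn \bx \rtn^{p(p+2)}_{\tp,[0,1]}\big]$, which by hypothesis \eqref{discstabcri_gendis} is strictly less than $\eta$. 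Consequently the general stability criterion \eqref{discstabcri.gen} of Theorem \ref{stable1} is satisfied at the chosen $(\lambda,\gamma^*)$.

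Third, with \eqref{cond.}, the integrability of $\ta^\Delta$, and criterion \eqref{discstabcri.gen} all in hand, Theorem \ref{stable1} directly yields almost sure exponential stability of $\ta^\Delta(\cdot)$. There is no substantive obstacle to overcome; the real work has already been done in the a priori attractor estimate (Proposition \ref{attractor}) and the uniform bound of Corollary \ref{rhs}. The only point requiring a minimum of bookkeeping is the order of parameter selection---one first fixes $\lambda$ and $\gamma^*$, which determines $\bar{\Gamma}$, and then reads \eqref{discstabcri_gendis} with that same $\bar{\Gamma}$, so consistency between the hypothesis and the application of Theorem \ref{stable1} is automatic.
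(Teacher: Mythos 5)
Your argument is correct and is essentially what the paper intends: since Theorem~\ref{disccridiss} is stated immediately after Corollary~\ref{rhs} with no written proof, the intended reasoning is precisely to use Proposition~\ref{attractor} to get $\ta^\Delta \in L^p$, then apply Corollary~\ref{rhs} and $\gamma^*<1$ to show that \eqref{discstabcri_gendis} implies \eqref{discstabcri.gen}, and finally invoke Theorem~\ref{stable1}. Your bookkeeping about fixing $\lambda,\gamma^*$ before reading off $\bar{\Gamma}$, and about the implicit smallness of $\Delta$ inherited from Proposition~\ref{attractor}, is consistent with the paper's remark following the theorem.
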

Criterion \eqref{discstabcri_gendis} is easier to check by fixing $\lambda, \gamma^*$ so that only $\bar{\Gamma} L_g$ in the right hand side depends increasingly on $L_g$.  
In the case of strict dissipativity, the following result concludes the stability for the singleton attractor. 
Note that the singleton attractor of the discrete system \eqref{REuler} converges to that of the continuous system as $\Delta\to 0$, as proved in \cite{congduchong23}. 

\begin{corollary}\label{strict}
Assume (${\textbf H}^+_{f}$) with \eqref{Lf}, (${\textbf H}_{g}$), (${\textbf H}_{X}$) and further that $f$ is strictly dissipative in the sense \eqref{strictdissipative}. There exist $\Delta^*>0$ and $C^*>0$,   such that for any $\Delta<\Delta^*$, any $L_g< C^*$, there exists a unique stationary solution to \eqref{REuler} which is also the singleton attractor. Moreover, the stationary solution is exponentially stable almost surely.
		\end{corollary}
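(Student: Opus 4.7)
The plan is to combine Proposition \ref{attractor} with Theorem \ref{disccridiss}, exploiting strict dissipativity to reduce both the existence and the stability criteria to elementary bounds. First I observe that \eqref{strictdissipative} implies global dissipativity \eqref{globaldiss}: splitting $\langle y,f(y)\rangle$ and applying Young's inequality,
\[
\langle y,f(y)\rangle = \langle y,f(0)\rangle + \langle y-0,f(y)-f(0)\rangle \leq \frac{\|f(0)\|^2}{2D} + \frac{D}{2}\|y\|^2 - D\|y\|^2 = \frac{\|f(0)\|^2}{2D} - \frac{D}{2}\|y\|^2,
\]
so \eqref{globaldiss} holds with $D_1=\|f(0)\|^2/(2D)$ and $D_2=D/2$. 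Hence Proposition \ref{attractor} yields a pullback random attractor $\mathcal{A}^\Delta(\omega)$ for every $\Delta<\Delta_0^*$, and any stationary solution $\ta^\Delta$ of \eqref{REuler} must lie inside $\mathcal{A}^\Delta$.

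Next I would promote $\mathcal{A}^\Delta$ to a singleton by a contraction argument at the discrete level. Rerunning the computations of Propositions \ref{hnew}, \ref{case1} and \ref{z} on the difference $h_t = y^\Delta_t - \bar y^\Delta_t$ of two arbitrary solutions of \eqref{REuler}, with $\bar y^\Delta$ taking the role previously played by $\ta^\Delta$, the strict dissipativity combined with Proposition \ref{propl} gives $M(Df,\bar y^\Delta_{t_k},r)\leq -D$ uniformly in $k$ and in $r$. The analogue of Proposition \ref{z} then reads
\[
\|h_{\tau^\Delta_{n+1}}\| \leq \exp\Bigl\{(-D+\tfrac{1}{2}L_f^2\Delta)(\tau^\Delta_{n+1}-\tau^\Delta_n) + KL_g\gamma^*N^*(\gamma^*,\bx,[\tau^\Delta_n,\tau^\Delta_{n+1}])\Bigr\}\|h_{\tau^\Delta_n}\|.
\]
For $\Delta$ and $L_g$ small enough, the Birkhoff ergodic theorem applied along the stopping-time sequence (exactly as in Theorem \ref{stable1}) forces $\|h_{\tau^\Delta_n}\|\to 0$ almost surely, so any two pullback trajectories coincide in the limit. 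This gives $\mathcal{A}^\Delta(\omega)=\{\ta^\Delta(\omega)\}$ and identifies $\ta^\Delta$ as the unique stationary solution.

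To apply Theorem \ref{disccridiss}, strict dissipativity and Proposition \ref{propl} yield $\ell(f,y)\leq -D$ for every $y\in\R^d$, hence $\E\ell(f,\ta^\Delta(\cdot))\leq -D$. Choosing $\Delta^*\leq D/L_f^2$ ensures $\eta := -\E\ell(f,\ta^\Delta(\cdot))-\tfrac{1}{2}L_f^2\Delta \geq D/2 > 0$, so condition \eqref{cond.} holds. The right-hand side of \eqref{discstabcri_gendis} has the form $K\bar{\Gamma}L_g(1+\E\ltn\bx\rtn^{p(p+2)}_{\tp,[0,1]})$, and since the prefactor $K\bar{\Gamma}$ depends on $g$ only through $\|g\|_{C^3_b}$ and we may assume this norm bounded once $L_g$ is small, the product $K\bar{\Gamma}L_g \to 0$ as $L_g \to 0$. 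Hence one can choose $C^*>0$ so that whenever $L_g<C^*$ the criterion \eqref{discstabcri_gendis} is satisfied, and Theorem \ref{disccridiss} then gives almost sure exponential stability of $\ta^\Delta$.

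The main obstacle is the singleton claim: unlike in Corollary \ref{global_diss_attractor}, the Doss-Sussmann technique is unavailable in the discrete setting, so the pairwise contraction estimate must be re-derived by running the discrete sewing/stopping-time machinery of Section \ref{Sectime} on the difference of two solutions rather than on the difference between a solution and the stationary one. The technical point that makes this work is exactly that strict dissipativity gives a uniform bound $M\leq -D$ valid everywhere and independent of any reference trajectory, which is why one can bypass the local oscillation-of-$Df$ controls that appear in the proof of Theorem \ref{stable1}. The only remaining delicate point is checking that $\bar{\Gamma}$ in \eqref{discstabcri_gendis} does not grow faster than $1/L_g$ as $L_g\downarrow 0$; this is guaranteed by the explicit dependence of $\bar{\Gamma}$ on $\|g\|_{C^3_b}$ recorded in Corollary \ref{rhs}.
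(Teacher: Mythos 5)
Your proposal follows the paper's route exactly: Proposition \ref{attractor} for existence of the pullback attractor, the contraction estimate of Proposition \ref{z} applied to the difference of two stationary trajectories for the singleton, and the stability criterion via Theorem \ref{stable1}/\ref{disccridiss}. Two technical slips worth tightening: (i) strict dissipativity gives $\ell(f,\hat y,r)\le -D$ for all $r$, hence the direct inequality $\langle y-\hat y,f(y)-f(\hat y)\rangle\le -D\|y-\hat y\|^2$, and it is this bound that should replace the $M$-inequality \eqref{fdiff} in the propositions — but the literal claim $M(Df,\hat y,r)\le -D$ is false, since $M=\ell(f,\hat y)+osc(Df(\hat y))_r$ and the oscillation is nonnegative; (ii) a small $L_g$ does \emph{not} control $\|g\|_{C^3_b}$ (the paper itself notes $\|g\|_\infty$ may be large while $L_g$ is small), so $\bar\Gamma$ is not automatically bounded as $L_g\downarrow0$; the correct reading, consistent with the paper's own remark that $C^*$ depends on $f$, $g$ and $\E\ltn\bx\rtn^{p(p+2)}_{\tp,[0,1]}$, is to restrict to $g$ in a fixed $C^3_b$-ball and choose $C^*$ accordingly.
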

\begin{proof}
The existence of of the random pullback attractor is given in Proposition \ref{radius} given that $\Delta$ small enough. The fact that the attractor is one point under sufficient small $L_g$ is proved in a similar way to the continuous case in \cite{duc21} and \cite{duchong21}
 where we make use of Proposition \ref{z} for $h$ being defined as the difference of two stationary solutions.
Thus, one may choose $\Delta^*$ depending on $f$, $C^*$ depending on $f,g$,  $\E(\ltn \bx\rtn^{p(p+2)}_{\tp,[0,1]})$ so that 
\eqref{cond.}, \eqref{discstabcri.gen}  are satisfied. The exponential stability is then a direct consequence of Theorem \ref{stable1}.
\end{proof}

\subsection*{Special case: the trivial solution}\label{sec_stab_discrete}
We close this section by considering the special case in which $a^\Delta$ is the trivial solution of the discrete system under assumptions (${\textbf H}^\eps_{f}$), (${\textbf H}^\eps_{g}$) as stated at the end of Section \ref{contRDE}.
First define
\[        L^*_f(\eps_0)=\|Df\|_{\infty, B(0,\eps_0)}
\]
and, by Remark \ref{specialcase} (1), also define
\[            L^*_g(\eps_0)=\max\{\|Dg\|_{\infty, B(0,\eps)},\|g\|_{\infty, B(0,\eps)}\|D^2g\|_{\infty, B(0,\eps)}\}.
 \]
We consider the discrete scheme associate with \eqref{RDE1new}
 	\begin{equation}\label{REuler_new}
			\begin{split}
				y^\Delta_0 &\in \R^d,\\
				y^\Delta_{t_{k+1}} &= y^\Delta_{t_k} + f^*(y^\Delta_{t_k}) \Delta + g^*(y^\Delta_{t_k})x_{t_k, t_{k+1}} + Dg^*(y^\Delta_{t_k})g^*(y^\Delta_{t_k})\X_{t_k,t_{k+1}},\quad k \in \N.
			\end{split}
        \end{equation}
The following result is similar to Theorem \ref{main1}.
		\begin{theorem}\label{main1_new}
			Assume  (${\textbf H}_X$), (${\textbf H}^\eps_f$) and (${\textbf H}^\eps_g$) hold with $f(0)=0, g(0) =0$. Then there exists $\Delta_0>0$, $C_0>0$ depending only on $f$ such that  for any $0<\Delta<\Delta_0$, $\|Dg(0)\|<C_0$,  
			the trivial solution of \eqref{REuler} is exponentially stable almost surely. 
		\end{theorem}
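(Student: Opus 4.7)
The plan is to mimic the argument used for Theorem \ref{main1}: extend the local data $(f,g)$ to global data $(f^*,g^*)$ via Lemma \ref{fefferman}, apply Theorem \ref{stable1} to the trivial solution of the extended scheme \eqref{REuler_new}, and then transfer the conclusion back to \eqref{REuler} using the fact that trajectories starting sufficiently close to $0$ stay inside $B(0,\epsilon_0)$, where $(f^*,g^*)=(f,g)$.

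First I would invoke Lemma \ref{fefferman} to obtain extensions $f^*\in C^1_b(\R^d,\R^d)$ and $g^*\in C^3_b(\R^d,\cL(\R^m,\R^d))$ satisfying $\|f^*\|_{C^1}\leq c^*\|f\|_{C^1(B(0,\epsilon_0))}$ and an analogous bound for $g^*$. In particular $f^*(0)=f(0)=0$ and $g^*(0)=g(0)=0$, so the zero point is a (deterministic, hence stationary) solution of \eqref{REuler_new}, and assumptions (${\textbf H}_f$), (${\textbf H}_g$), (${\textbf H}_X$) together with \eqref{Lf} are met globally. By Proposition \ref{propl},
\[
\ell(f^*,0)=\sup_{\|h\|=1}\langle h,Df(0)h\rangle\leq -\lambda_f,
\]
using (${\textbf H}^\eps_f$). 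Choose $\Delta_0>0$ small so that $\eta:= \lambda_f-\tfrac{1}{2}(L^*_f(\epsilon_0))^2\Delta>\lambda_f/2>0$ for every $\Delta\in(0,\Delta_0]$; then condition \eqref{cond.} of Theorem \ref{stable1} holds for the extended scheme at $\ta^\Delta\equiv 0$.

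Next I would check the stability criterion \eqref{discstabcri.gen}. Fix $\lambda\in(0,1/2)$ and $\gamma^*\in(0,1)$ once and for all. Because $\ta^\Delta\equiv 0$ one has $\|f^*(\ta^\Delta)\|_{\infty,\Pi[0,1]}=0$, so the control $\bw^{(4)}$ vanishes and the estimate \eqref{N_new} reduces to
\[
N^*(\lambda,\cS,[0,1])\;<\;1+\tfrac{4^{p-1}}{\lambda^p}\bigl[(L^*_f(\epsilon_0))^p+(4C_p+1)^p L_{g^*}^p\,\ltn\bx(\cdot)\rtn^p_{\tp,[0,1]}\bigr],
\]
whose expectation is finite and bounded by $A+B\,L_{g^*}^p$ for constants $A,B$ depending only on $f$, $\lambda$, $\gamma^*$, and (${\textbf H}_X$). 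Similarly $\E N^*(\gamma^*,\bx(\cdot),[0,1])$ is a constant depending only on $\gamma^*$ and (${\textbf H}_X$). Hence the right-hand side of \eqref{discstabcri.gen} is of the form $K L_{g^*}\gamma^*(C_1+C_2 L_{g^*}^p)$, which tends to $0$ as $L_{g^*}\downarrow 0$. The key observation is now that by shrinking the radius $\epsilon\leq\epsilon_0$ (using $g(0)=0$, $Dg$ bounded on $B(0,\epsilon_0)$, and the $C^3_b$-regularity), the quantity $\|g\|_{\infty,B(0,\epsilon)}$ can be made as small as we like, and by requiring $\|Dg(0)\|<C_0$ for a sufficiently small $C_0=C_0(f,\lambda,\gamma^*)>0$ we obtain $\|Dg\|_{\infty,B(0,\epsilon)}$ arbitrarily small as well; together with the Fefferman estimate \eqref{fefferman2} this forces $L_{g^*}$ to be as small as needed for \eqref{discstabcri.gen} to hold with the chosen $\eta$. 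Theorem \ref{stable1} now applies to the extended scheme \eqref{REuler_new} with stationary solution $\ta^\Delta\equiv 0$, producing random quantities $R^*(\omega)>0$, $\alpha^*(\omega)>0$ and a deterministic rate $\mu>0$ such that every trajectory $y^\Delta$ of \eqref{REuler_new} with $\|y^\Delta_0\|\leq R^*(\omega)$ satisfies $\|y^\Delta_{t_k}\|\leq \alpha^*(\omega)e^{-\mu t_k}$ for all $k\in\N$.

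Finally, to transfer the conclusion from the extended scheme back to \eqref{REuler}, I would shrink $R^*(\omega)$ further if necessary so that $\alpha^*(\omega) R^*(\omega)/r \leq 1$ forces $\|y^\Delta_{t_k}\|<\epsilon$ for all $k$; this is the discrete analogue of the cut-off argument at the end of the proof of Theorem \ref{main1}. Since $(f^*,g^*)=(f,g)$ on $B(0,\epsilon_0)$, such a trajectory never sees the extension and is therefore simultaneously a solution of \eqref{REuler}. Exponential stability of the zero solution of \eqref{REuler} follows. The main (and essentially only) obstacle is the book-keeping that proves the $L_{g^*}$-dependence of the right-hand side of \eqref{discstabcri.gen} can genuinely be driven below the fixed positive number $\eta$: this requires exploiting both the smallness of $\|Dg(0)\|$ and the freedom to shrink $\epsilon$ (so that $\|g\|_{\infty,B(0,\epsilon)}\to 0$), because $L_{g^*}$ mixes first-, second-, and third-order derivatives of $g$ via the formula \eqref{Lgdis}. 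Once this is done, the rest is a direct application of Theorem \ref{stable1} and a cut-off argument identical in spirit to the one used for Theorem \ref{main1}.
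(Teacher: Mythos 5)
Your overall strategy matches the paper's: extend $(f,g)$ from $B(0,\epsilon_0)$ to $(f^*,g^*)$ on $\R^d$ via Lemma \ref{fefferman}, check conditions \eqref{cond.} and \eqref{discstabcri.gen} for the trivial stationary solution of the extended scheme \eqref{REuler_new}, invoke Theorem \ref{stable1}, and then transfer back by a cut-off argument. Observing that $\bw^{(4)}$ vanishes for $\ta^\Delta\equiv 0$ and that $\E N^*(\gamma^*,\bx(\cdot),[0,1])$, $\E N^*(\lambda,\cS,[0,1])$ are bounded by $g$-independent constants (once $L_g$ is normalized below $1$) is also the paper's route.

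However, there is a genuine gap in the step you yourself flag as ``the main obstacle.'' You claim that shrinking $\epsilon$ (to make $\|g\|_{\infty,B(0,\epsilon)}$ small) and requiring $\|Dg(0)\|<C_0$ together force $L_{g^*}$ to be small, ``because $L_{g^*}$ mixes first-, second-, and third-order derivatives.'' But the definition \eqref{Lgdis} contains the \emph{standalone} term $\|D^2g\|_\infty$, which is not paired with $\|g\|_\infty$; shrinking $\epsilon$ or $\|Dg(0)\|$ does nothing to it, and there is no hypothesis forcing $D^2g(0)$ to be small. As written your argument cannot drive $L_{g^*}$ below an arbitrary threshold. The missing ingredient is Remark \ref{specialcase}(2): because $\ta^\Delta\equiv 0$ is a non-random fixed point, $\ltn\ta^\Delta,R^{\ta^\Delta}\rtn_{\tp,\Pi[a,b]}=0$, so the terms in the proof of Proposition \ref{hnew} involving $\ltn\ta^\Delta\rtn$ and $\ltn R^{\ta^\Delta}\rtn$ drop out and $L_g$ may be replaced by the reduced quantity $\max\{\|Dg\|_\infty,(\|g\|_\infty\|D^2g\|_\infty)^{1/2}\}$. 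Both of those entries \emph{do} tend to zero as $\|g\|_{\infty,B(0,\epsilon)}\to 0$ (with $\|D^2g\|_{\infty,B(0,\epsilon_0)}$ fixed) and $\|Dg\|_{\infty,B(0,\epsilon)}$ controlled by $C_0$. This reduction is exactly what the paper uses, via its definition of $L^*_g(\epsilon_0)$ preceding the theorem and the choice $\epsilon_2\leq C_0/(1+\|D^2g\|_{\infty,B(0,\epsilon_0)})$. You should insert this reduction before asserting smallness of $L_{g^*}$; otherwise the quantitative heart of the proof does not go through.

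A minor point: you write that $\|Dg\|_{\infty,B(0,\epsilon)}$ can be made ``arbitrarily small''; by continuity of $Dg$ at $0$ it can only be brought close to $\|Dg(0)\|<C_0$, i.e.\ below $C_0$, which is what you actually need. This is a wording issue, not a mathematical one, but it should be stated correctly so that the order of quantifiers (first fix $\Delta_0$, $\lambda$, $\gamma^*$, then define $C_0$ in terms of $f$-dependent constants, then choose $\epsilon_2$) is unambiguous.
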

		\begin{proof}
Firstly consider \eqref{REuler_new} with the note that $c^*L^*_f(\eps_0), c^*L_g^*(\eps_0)$	play the role of $L_f,L_g$ in \eqref{cond.}, \eqref{discstabcri.gen}, where $c^*$ is determined in Lemma \ref{fefferman}. 

We fix $\Delta_0 < \frac{2\lambda_f}{(c^*L_f(\eps_0))^2}$, $\gamma^*=\dfrac{1}{2},\lambda=\dfrac{1}{3}$ and choose $C_0<1$ satisfy
\[
\eta=\lambda_f-\frac{(c^*L_f(\eps))^2\Delta_0}{2}> \dfrac{1}{2}Kc^*C_0\Big[\E N^*(\dfrac{1}{2},\bx,[0,1])+ 2 \E N^*(\dfrac{1}{3},\cS,[0,1])\Big].
\]   
This is possible, because we can estimate the two expectations due to \eqref{Nest} and \eqref{N*}
			\begin{eqnarray*}
				\E N^*(\frac{1}{2},\bx(\cdot),[0,1]) &\leq& 1+ 2^p \E\ltn \bx(\cdot) \rtn^p_{\tp,[0,1]} =: C_1;\\  
				\E N^*(\frac{1}{3},\cS,[0,1])
				&\leq& 1 + 3^p(c^*L_f(\eps_0))^p + (12C_pc^*)^p \E\ltn \bx(\cdot) \rtn^p_{\tp,[0,1]}  =: C_2 
			\end{eqnarray*}
			by the quantities which are independent of $g$. We then determine $C_0$ as follows
			\[
			C_0 := \min \Big\{1,\frac{2\eta}{c^*K (C_1+2C_2)} \Big\}.
			\]
If $\|Dg(0)\|<C_0$ there exists $\eps_1<\eps_0$ so that $\|Dg\|_{\infty,B(0,\eps_1)}<C_0$. Choose
	\begin{equation}\label{new_eps_2}
				\epsilon_2 \leq \min\{ \frac{C_0 }{1+ \|D^2g\|_{\infty,B(0,\epsilon_0)}},  \epsilon_1 \}. 
			\end{equation}
			Then $B(0,\epsilon_2)\subset B(0,\epsilon_0)$ and we have
			\begin{equation*}\label{eqn_main1}
				\|D^ig\|_{\infty,B(0,\epsilon_2)}\leq
				\|D^ig\|_{\infty,B(0,\epsilon_0)} \quad\hbox{for}\quad i=0,1,2,3.
			\end{equation*}
            Since $\|Dg\|_{\infty,B(0,\epsilon_2)}\leq C_0$ and $g(0)=0$ we have 
			$\|g\|_{\infty,B(0,\epsilon_2)}\leq C_0\epsilon_2$. Hence
			\begin{eqnarray}\label{est.tilde.C}
				\|g\|_{\infty,B(0,\epsilon_2)}  \|D^2g\|_{\infty,B(0,\epsilon_2)}\leq  C_0\epsilon_2 \|D^2g\|_{\infty,B(0,\epsilon_2)} < C^2_0;
			\end{eqnarray}
which implies $L^*_g(\eps_2)<C_0$. It is easily seen that with such choice of $\Delta_0$ and $C_0$ if $0<\Delta \leq \Delta_0$ and $0< \|Dg(0)\|< C_0$, the criterion  \eqref{discstabcri.gen}  is satisfied. hence Theorem \ref{stable1} is applicable to \eqref{REuler_new} to conclude the exponential stability of the trivial solution. Moreover, one may replace $r$ in \eqref{discR} by $\eps_2$ so that any solution $y^\Delta$ of \eqref{discR} starts inside $B(0, R^\Delta(\omega))$ where $R^\Delta(\omega)$ given in \eqref{discR} does not exit from the ball. Therefore, $y^\Delta$ is also the solution of \eqref{REuler}. This completes the proof. 
\end{proof}

		\section*{Acknowledgments}
		The authors would like to thank the anonymous referees for their useful comments which led to the improvements of the paper. This work is partly supported by the project NVCC01.10/24-25 of Vietnam Academy of Science and Technology.
        L. H. D. would like to thank Bj\"orn Schmalfu{\ss} for the invitation to give a talk in the Stochastics seminar at Friedrich-Schiller-Universit\"at Jena (Germany) in June 2023. 
		    
		\section{Appendix}\label{appen}
		\subsection{Rough paths}\label{roughpath}
		Let us briefly present the concept of rough paths in the simplest form, following \cite{frizhairer} and  \cite{lyons98}. 
		For any finite dimensional vector space $W$, denote by $C([a,b],W)$ the space of all continuous paths $y: [a,b] \to W$ equipped with the sup norm $\|\cdot\|_{\infty,[a,b]}$ given by $\|y\|_{\infty,[a,b]}=\sup_{t\in [a,b]} \|y_t\|$, 
		where $\|\cdot\|$ is the norm in $W$. We write $y_{s,t}:= y_t-y_s$. For $p\geq 1$, denote by $C^{p{\rm-var}}([a,b],W)\subset C([a,b],W)$ the space of all continuous paths $y:[a,b] \to W$ of finite $p$-variation $\ltn y\rtn_{\tp,[a,b]} :=\left(\sup_{\mathcal{P}([a,b])}\sum_{i=1}^n \|y_{t_i,t_{i+1}}\|^p\right)^{1/p} < \infty$, where the supremum is taken over the whole class of finite partitions of $[a,b]$. 
        
		Also for each $0<\alpha<1$, we denote by $C^{\alpha}([a,b],W)$ the space of H\"older continuous functions with exponent $\alpha$ on $[a,b]$ equipped with the norm
		\begin{equation}\label{holnorm}
			\|y\|_{\alpha,[a,b]}: = \|y_a\| + \ltn y\rtn_{\alpha,[a,b]},\quad \text{where} \quad \ltn y\rtn_{\alpha,[a,b]} :=\sup_{\substack{s,t\in [a,b],\ s<t}}\frac{\|y_{s,t}\|}{(t-s)^\alpha} < \infty.
		\end{equation}
		For $\alpha \in (\frac{1}{3},\frac{1}{2})$, a couple $\bx=(x,\X) \in \R^m \oplus (\R^m \otimes \R^m)$, where $x \in C^\alpha([a,b],\R^m)$ and 
		\begin{eqnarray*}
			 \X \in C^{2\alpha}([a,b]^2,\R^m \otimes  \R^m) 
			&:=& \left\{\X \in C([a,b]^2,\R^m \otimes  \R^m):  \sup_{\substack{s, t \in [a,b],\ s<t}} \frac{\|\X_{s,t}\|}{|t-s|^{2\alpha}} < \infty \right\}, 
		\end{eqnarray*}
		is called a {\it rough path} if it satisfies Chen's relation
		\begin{equation}\label{chen}
			\X_{s,t} - \X_{s,u} - \X_{u,t} = x_{s,u} \otimes  x_{u,t},\qquad \forall a \leq s \leq u \leq t \leq b. 
		\end{equation}
		We introduce the rough path semi-norm 
		\begin{eqnarray}\label{translated}
			\ltn \bx \rtn_{\alpha,[a,b]} &:=& \ltn x \rtn_{\alpha,[a,b]} + \ltn \X \rtn_{2\alpha,[a,b]^2}^{\frac{1}{2}},\\ 
			\text{where}\quad \ltn \X \rtn_{2\alpha,[a,b]^2}&:=& \sup_{s, t \in [a,b];s<t} \frac{\|\X_{s,t}\|}{|t-s|^{2\alpha}} < \infty. \notag 
		\end{eqnarray}
		Throughout this paper, we will fix parameters $\frac{1}{3}< \alpha < \nu \leq \frac{1}{2}$ and  $p = \frac{1}{\alpha}$ so that $C^\alpha([a,b],W) \subset C^{\tp}([a,b],W)$. We also set $q=\frac{p}{2}$ and consider the $\tp$ semi-norm 
		\begin{equation}\label{pvarnorm}
			\begin{split}
				\ltn \bx \rtn_{\tp,[a,b]} &:= \Big(\ltn x \rtn^p_{\tp,[a,b]} + \ltn \X \rtn_{\tq,[a,b]^2}^q\Big)^{\frac{1}{p}}, \\
				\ltn \X \rtn_{\tq,[a,b]^2} &:= \left(\sup_{\mathcal{P}([a,b])}\sum_{i=1}^n \|\X_{t_i,t_{i+1}}\|^q\right)^{1/q}, 
			\end{split}
		\end{equation}
		where the supremum is taken over the whole class of finite partitions $\mathcal{P}([a,b])$ of $[a,b]$. The reader is also referred to \cite[Chapter 10]{frizhairer} for a detailed construction of $\X$ of a multi-dimensional Gaussian process $X = (X_i)_{i = 1}^m$ with mutually independent components.  

  	\subsection{Probabilistic settings}\label{probset}
		Following \cite{duckloeden}, denote by $T^2_1(\R^m) = 1 \oplus \R^m \oplus (\R^m \otimes \R^m)$ the set with the tensor product
		\[
		(1,g^1,g^2) \otimes (1,h^1,h^2) = (1, g^1 + h^1, g^1 \otimes h^1 + g^2 +h^2),
		\]
		for all ${\bf g} =(1,g^1,g^2), {\bf h} = (1,h^1,h^2) \in T^2_1(\R^m)$.
		Then $(T^2_1(\R^m),\otimes)$ is a topological group with unit element ${\bf 1} = (1,0,0)$ and ${\bf g}^{-1} = (1,-g^1, g^1 \otimes g^1 - g^2)$. 
		
		Given $\alpha \in (\frac{1}{3},\nu)$, denote by $\cC^{0,\alpha}(I,T^2_1(\R^m))$ 
		the closure of $\cC^{\infty}(I,T^2_1(\R^m))$ in the H\"older space $\cC^{\alpha}(I,T^2_1(\R^m))$, and by 
		$\cC_0^{0,\alpha}(\R,T^2_1(\R^m))$ the space of all paths $ {\bf g}: \R\to T^2_1(\R^m))$ such that $ {\bf g}|_I \in \cC^{0,\alpha}(I, T^2_1(\R^m))$ for each compact interval $I\subset\R$ containing $0$. Then $\cC_0^{0,\alpha}(\R,T^2_1(\R^m))$ is equipped with the compact open topology given by the $\alpha$-H\"older norm \eqref{holnorm}, i.e  the topology generated by the metric 
		\[
		d_\alpha( {\bf g}, {\bf h}): = \sum_{k\geq 1} \frac{1}{2^k} (\| {\bf g}- {\bf h}\|_{\alpha,[-k,k]}\wedge 1). 
		\]
		
		Let us consider a stochastic process $\bar{\bX}$ defined on a probability space $(\bar{\Omega},\bar{\mathcal{F}},\bar{\bP})$  with realizations in $(\cC^{0,\alpha}_0(\R,T^2_1(\R^m)), \mathcal{F})$. Assume further that $\bar{\bX}$ has stationary increments. Assign $\Omega:=\cC_0^{0,\alpha}(\R,T^2_1(\R^m))$ and equip it with the Borel $\sigma$-algebra $\mathcal{F}$ and let $\bP$ be the law of $\bar{\bX}$. Denote by $\theta$ the {\it Wiener-type shift}
		\begin{equation}\label{shift}
			(\theta_t \omega)_\cdot = \omega_t^{-1}\otimes \omega_{t+\cdot},\forall t\in \R, \omega \in \cC^{0,\alpha}_0(\R,T^2_1(\R^m)),
		\end{equation}  
		and define the so-called {\it diagonal process}  $\bX: \R \times \Omega \to T^2_1(\R^m), \bX_t(\omega) = \omega_t$ for all $t\in \R, \omega \in \Omega$. Due to the stationarity of $\bar{\bX}$, it can be proved that $\theta$ is invariant under $\bP$, then forming a continuous (and thus measurable) dynamical system on $(\Omega, \mathcal{F},\bP)$ \cite[Theorem 5]{BRSch17}. 
	
		As pointed out in \cite[Remark 1]{duc21} and due to \cite[Corollary 9]{BRSch17}, the above construction is possible for $X_t$ to be a continuous, centered Gaussian process with stationary increments and independent components, satisfying: there exists for any $T>0$ a constant $C_T$ such that for all $p \geq \frac{1}{\bar{\nu}}$,
		$\E \|X_t- X_s\|^{p} \leq C_T |t-s|^{p\nu}$ for all $s,t \in [0,T]$. Then $\bX$ can be chosen to be the natural lift of $X$ in the sense of Friz-Victoir \cite[Chapter 15]{friz} with sample paths in the space $C_0^{0,\alpha}(\R,T^2_1(\R^m))$, for a certain $\alpha \in (0,\nu)$. 
		In particular, the Wiener shift \eqref{shift} implies that
		\begin{equation}\label{roughshift}
			\begin{split}
				\ltn \bx(\theta_h \omega) \rtn_{\tp,[s,t]} &= \ltn \bx(\omega) \rtn_{\tp,[s+h,t+h]}; \\
				N_{[s,t]}(\bx(\theta_h \omega)) &=  N_{[s+h,t+h]}(\bx(\omega)).		
			\end{split}
		\end{equation}
		
		As said above, in this paper, we need an assumption on ergodicity of $\theta$. 
		It is known (see \cite[Lemma 3]{GASch})  that if $X$ is a $m$-dimensional fractional Brownian motion with mutual independent components, we have 
		the ergodicity of $\theta$. 
		
		\begin{lemma}\label{ergodicity}
			Assume that $X= B^H$, then $\theta$ is ergodic.
		\end{lemma}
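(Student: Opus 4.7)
The plan is to deduce ergodicity of $\theta$ on the enhanced space $(\Omega,\bP)$ from the classical ergodicity of the increment shift on the underlying fBm path space, by exhibiting a measurable, measure-preserving, shift-equivariant factor map. I would not attempt a direct spectral analysis on $T^2_1(\R^m)$-valued paths; instead I would push known results at the first-level paths through a natural lift.

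The first step is to invoke, as a black box, the ergodicity of the increment shift $\tilde\theta_t: x(\cdot) \mapsto x(t+\cdot)-x(t)$ on $(C_0(\R,\R^m),\tilde\bP)$, where $\tilde\bP$ is the law of $m$-dimensional fBm with independent components and Hurst parameter $H\in(\tfrac13,1)$. This is a Maruyama-style result for Gaussian processes with stationary increments: the spectral measure of fractional Gaussian noise is absolutely continuous (density proportional to $|\xi|^{1-2H}$) and has no atoms, whence the associated Gaussian flow is mixing; independence of the components turns this into ergodicity of the product system. This step is standard and is precisely the content invoked in \cite[Lemma 3]{GASch}.

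The second step is to construct a measurable lift $\Phi:C_0(\R,\R^m)\to \Omega$ sending an fBm trajectory $x$ to its canonical Friz--Victoir geometric rough path enhancement $\bx=(x,\X)$, with $\X_{s,t}=\int_s^t (x_u-x_s)\otimes dx_u$ defined almost surely for $H>\tfrac13$ via the piecewise-linear dyadic approximations converging in $\alpha$-Hölder norm (see \cite[Ch.~15]{friz}). Measurability of $\Phi$ follows from it being a pointwise a.s.\ limit of continuous maps, and $\bP=\Phi_{*}\tilde\bP$ by the very definition of $\bP$ as the law of the diagonal lift in Subsection \ref{probset}. The crucial equivariance $\theta_t\circ\Phi = \Phi\circ\tilde\theta_t$ holds a.s.: at the first level it is the identity $\tilde\theta_t x(\cdot)=x_{t,t+\cdot}$; at the second level, the group multiplication $\omega_t^{-1}\otimes \omega_{t+\cdot}$ in $T^2_1(\R^m)$ unpacks exactly to Chen's relation \eqref{chen} for iterated integrals based at different times, so the shifted Lévy area of $x$ equals the Lévy area of the shifted path. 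Given this intertwining, any $\theta$-invariant Borel $A\subset\Omega$ has $\tilde\theta$-invariant preimage $\Phi^{-1}(A)$, so $\bP(A)=\tilde\bP(\Phi^{-1}(A))\in\{0,1\}$ by the first step.

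The main obstacle is not the ergodic-theoretic transfer but the careful checking that $\Phi$ is simultaneously (a) measurable and a.s.\ Hölder-regular enough to land in the support of $\bP\subset C_0^{0,\alpha}(\R,T^2_1(\R^m))$, and (b) pointwise a.s.\ equivariant for the genuine (nonabelian) group shift on $T^2_1(\R^m)$. Once one verifies that the natural lift of fBm is both a measurable function of the sample path and compatible with the shift via Chen's relation, ergodicity of $\theta$ is immediate from pushforward. All technical content therefore sits in the explicit construction of the fBm lift (which for $H\in(\tfrac13,\tfrac12]$ is the range covered by assumption $(\textbf{H}_X)$).
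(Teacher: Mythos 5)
Your proposal is correct and follows essentially the same approach as the paper: invoke ergodicity of the increment shift on fBm sample paths (via \cite[Lemma 3]{GASch}), lift to the rough-path level via the Friz--Victoir natural lift constructed through piecewise-linear/mollifier approximations, observe equivariance of the lift with the shift via Chen's relation, and push forward. The only cosmetic difference is that you carry out the final measure-theoretic transfer explicitly (invariant sets pull back under the equivariant factor map), whereas the paper abbreviates this step by again referencing \cite[Lemma 3]{GASch} together with the RDS construction of \cite{BRSch17}.
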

		\begin{proof}
			We sketch out a short proof here. For $H = \frac{1}{2}$, the canonical process w.r.t. the Wiener measure $\mP_{\frac{1}{2}}$ and Wiener shift $\theta^*_t \omega_{\cdot} = \omega_{t+\cdot}-\omega_{\cdot}$ on $\Omega^* = C^0_0(\R,\R)$ is ergodic. By \cite{GASch}, the Wiener shift $\eta_t x_\cdot = x_{t+\cdot}-x_t $ is ergodic on $\Omega^\prime = C^{0,\alpha}(\R,\R^m)$ w.r.t. $\mathbb{P}_H = B^H \mP_{\frac{1}{2}}$. Because of \cite[Theorem 10.4]{frizhairer}, there exists a full measure subset $\Omega_1 \subset\Omega^\prime$ such that $\omega = (1,x,\X) \in \Omega$ for any $x \in \Omega_1$. Moreover, by \cite[Theorem 15.42, 15.45]{friz}, one can choose this full measure subset $\Omega_1$ such that it satisfies the piece-wise linear approximations (mollifier approximation). 
			Then consider the natural lift $\mathcal{S}$ on smooth paths
			\begin{equation}
				\mathcal{S}(x)_{s,t} = (1,x_{s,t},\int_s^t x_{s,r}dx_r),
			\end{equation}
			which can be extended to $\Omega_1$ such that $\mathcal{S}: \Omega_1 \to \Omega$. One can now apply the arguments in \cite{BRSch17} to conclude that there exists a metric dynamical system $(\hat{\Omega},\hat{\mathcal{F}},\hat{\mathbb{P}},\hat{\theta})$ such that $\hat{\Omega} \subset \Omega$ and $\hat{\mathbb{P}}= \mathbb{P}^H \circ \mathcal{S}^{-1}$. Furthermore,
			\begin{equation}
				\theta_t \mathcal{S}(x) = \lim \limits_{n \to \infty} \hat{\theta}_t \mathcal{S}(x^{(n)}) = \lim \limits_{n \to \infty} \mathcal{S}(\eta_t x^{(n)}) = \mathcal{S}(\eta_t x).
			\end{equation}
			Since $\eta$ is ergodic, it follows from \cite[ Lemma 3]{GASch} that $\theta$ is also ergodic.
		\end{proof}	
		
		\subsection{Gubinelli's rough path integrals}\label{roughint}
		
		Following Gubinelli \cite{gubinelli}, a rough path integral can be defined for a continuous path $y \in C^\alpha([a,b],W)$ which is {\it controlled by} $x \in C^\alpha([a,b],\R^m)$ in the sense that, there exists a couple $(y^\prime,R^y)$ with $y^\prime \in C^\alpha([a,b],\cL(\R^m,W)), R^y \in C^{2\alpha}([a,b]^2,W)$ such that
		\begin{equation}\label{controlRP}
			y_{s,t} = y^\prime_s   x_{s,t} + R^y_{s,t},\qquad \forall a\leq s \leq t \leq b.
		\end{equation}
		$y^\prime$ is called the {\it Gubinelli derivative} of $y$.\\		
		Denote by $\cD^{2\alpha}_x([a,b])$ the space of all the couples $(y,y^\prime)$ controlled by $x$. Then for a fixed rough path $\bx = (x,\X)$ and any controlled rough path $(y,y^\prime) \in \cD^{2\alpha}_x ([a,b])$, the integral $\int_s^t y_u dx_u$ can be defined as the limit of the Darboux sum 
		\begin{equation*}
			\int_s^t y_u dx_u := \lim \limits_{|\Pi| \to 0} \sum_{[u,v] \in \Pi} \Big( y_{u}  \otimes  x_{u,v} + y^\prime_u   \X_{u,v} \Big)
		\end{equation*}
		where the limit is taken on all finite partitions $\Pi$ of $[a,b]$ with $|\Pi| := \displaystyle\max_{[u,v]\in \Pi} |v-u|$. Moreover, there exists a constant $C_p >1$ independent of $\bx$ and $(y,y^\prime)$ such that
		\begin{equation}\label{roughpvar}
				\Big\|\int_s^t y_u dx_u - y_s \otimes x_{s,t} - y^\prime_s \X_{s,t}\Big\|
				\leq  C_p \Big(\ltn x \rtn_{\tp,[s,t]} \ltn R^y \rtn_{\tq,[s,t]^2} + \ltn y^\prime\rtn_{\tp,[s,t]} \ltn \X \rtn_{\tq,[s,t]^2}\Big).
		\end{equation}

		\subsection{Stochastic integrals as rough integrals} \label{Ito}
		In general, a Gubinell rough integral $\int y dx$ is defined in the pathwise sense with respect to a driving path $x$, yet we can compare it to classical stochastic integrals in some special cases. Namely, let $B$ be a $m$-dimensional Brownian motion which is enhanced to an It\^o rough path $(B(\omega),\B^{\text{It\^o}}(\omega)) \in \cC^\alpha$ for any $\alpha \in (\frac{1}{3},\frac{1}{2})$ a.s. Assume $(y(\omega),y^\prime(\omega)) \in \cD^{2\alpha}_{B(\omega)}$ a.s. Then the rough integral 
		\[
		\int_I y_r dB^{\text{It\^o}}_r = \lim \limits_{|\Pi| \to 0} \sum_{[u,v] \in \Pi} \Big(y_u B_{u,v} + y^\prime_u \B^{\text{It\^o}}_{u,v}\Big) 
		\]
		exists a.s. If $y, y^\prime$ are adapted, then $\int_I y_r d^{\text{It\^o}}_r = \int_I y_r dB_r$ a.s. where the latter is the It\^o integral. The same conclusions also hold for $(B, \B^{\text{Strat}})$ and the corresponding Stratonovich integral (see \cite{frizhairer}).

      \subsection{Discrete rough paths}\label{discRP}
        Firstly, we recall some notation from \cite{congduchong23}. For a given function $y$ defined on finite set $\Pi[a,b] := \Pi\cap[a,b]$, where $a,b\in \Pi$, we define $R^y_{s,t}:= y_{s,t} - g(y_s)x_{s,t},\ s<t\in \Pi$ and introduce the following quantities
	\begin{equation*}
    \begin{split}
        \|y\|_{\infty,\Pi[a,b]} &: = \sup_{t_i\in \Pi[a,b]} \|y_{t_i}\|;\quad \| R^y\|_{\infty,\Pi[a,b]} :=  \sup_{s<t\in \Pi[a,b]}\|R^y_{s,t} \|; \\
		\ltn y\rtn_{\tp  ,\Pi[a,b]}  &:= \sup_{t_i^*\in\Pi[a,b],\; 0\leq i\leq r,\; t_0^*<t_1^*\ldots<t_r^*,\; r\leq n}\left(\sum_{i=0}^{r-1} \|y_{t_i^*}-y_{t_{i+1}^*}\|^p\right)^{1/p};\\
        \ltn R^y\rtn_{\tq,\Pi[a,b]} &: = \sup_{t_i^*\in\Pi[a,b],\; 0\leq i\leq r,\; t_0^*<t_1^*\ldots<t_r^*,\ r\leq n}\left(\sum_{i=0}^{r-1} |R^y_{t^*_i,t^*_{i+1}}|^p\right)^{1/p},
    \end{split}
		\end{equation*}
and  
       \begin{equation}\label{yRynorm}
       \begin{split}
          \ltn y,R^y\rtn_{\tp  ,\Pi[a,b]} &:=  \ltn y\rtn_{\tp,\Pi[a,b]} \vee \ltn R^y\rtn_{\tq ,\Pi[a,b]}  ;\\
            \| y,R^y\|_{\tp  ,\Pi[a,b]} &= \|y_a\|+\ltn y,R^y\rtn_{\tp  ,\Pi[a,b]}.            
       \end{split}
       \end{equation}  

		\subsection{Proofs}\label{proofs}
        \begin{proof}[{\bf Proposition \ref{solest}}] 
			We follow the proofs in  \cite[Proposition 2.1]{duc21} and \cite[Proposition 2.3]{duckloeden} with a small modification regarding to parameters.  In this proof we omit notation $[a,b]$ in superemum and $p-$var norms.
            
            $(i)$, First observe that $[g(\phi)]^\prime = Dg(\phi)g(\phi)$ so that 
			\[
			\|[g(\phi)]^\prime\|_\infty \leq \|Dg\|_\infty \|g\|_\infty \quad \text{and}\quad \ltn [g(\phi)]^\prime\rtn_{\tp} \leq (\|D^2g\|_\infty \|g\|_\infty + \|Dg\|_\infty^2) \ltn \phi \rtn_{\tp}.
			\]
		By the definition of Gubinelli derivative \eqref{controlRP}, since $\phi'_s=g(\phi_s)$ we have
        \begin{eqnarray*}
        R^{g(\phi)}_{s,t} &=& g(\phi)_{s,t}-
        Dg(\phi_s)g(\phi_s)x_{s,t}
        = \int_0^1 Dg(\theta \phi_t +(1-\theta)\phi_s)
        \phi_{s,t} d\theta - Dg(\phi_s)g(\phi_s)x_{s,t}\\
        &=&
         \int_0^1 Dg(\theta \phi_t +(1-\theta)\phi_s)
        (\phi'_s x_{s,t} + R^\phi_{s,t}) d\theta - Dg(\phi_s)g(\phi_s)x_{s,t}\\
        &=&
         \int_0^1 Dg(\theta \phi_t +(1-\theta)\phi_s)
        R^\phi_{s,t} d\theta
        +\int_0^1 \Big[Dg(\theta \phi_t +(1-\theta)\phi_s)-Dg(\phi_s)\Big] g(\phi_s)x_{s,t} d\theta\\
        &=&
         \int_0^1 Dg(\theta \phi_t +(1-\theta)\phi_s)
        R^\phi_{s,t} d\theta
     +   \int_0^1\Big[ \int_0^1 D^2g(\eta\theta\phi_t + (1-\eta\theta)\phi_s)\theta\phi_{s,t} d\eta\Big]g(\phi_s) x_{s,t} d\theta.
        \end{eqnarray*}
        This implies
        \[
        \|R^{g(\phi)}_{s,t}\| \leq
        \|Dg\|_\infty \|R^\phi_{s,t}\|
        + \frac{1}{2} \|D^2g\|_\infty \|g\|_\infty \|\phi_{s,t}\| \|x_{s,t}\|.
        \]
          Consequently, by Minkowski and Cauchy-Schwarz inequalities we have
          \[
          \ltn R^{g(\phi)}\rtn_{\tq} \leq \|Dg\|_\infty \ltn R^\phi \rtn_{\tq} + \frac{1}{2}\|D^2g\|_\infty  \|g\|_\infty \ltn x\rtn_{\tp}\ltn \phi\rtn_{\tp}.
          \]
As a result, it follows from $\phi_{s,t} = \int_s^t g(\phi_u)dx_u$ that 
			\begin{eqnarray}\label{phipvar} 
				\ltn \phi \rtn_{\tp} &\leq& \|g\|_\infty \ltn x\rtn_{\tp} + \|Dg\|_\infty \|g\|_\infty  \ltn \X \rtn_{\tq} \notag\\
				&&+\; C_p \Big\{\ltn x \rtn_{\tp} \ltn R^{g(\phi)} \rtn_{\tq} + \ltn \X \rtn_{\tq} \ltn [g(\phi)]^\prime \rtn_{\tp}  \Big\} \notag\\
				&\leq& \|g\|_\infty \ltn x\rtn_{\tp} + \|Dg\|_\infty \|g\|_\infty  \ltn \X \rtn_{\tq} + C_p \|Dg\|_\infty\ltn x \rtn_{\tp} \ltn R^\phi \rtn_{\tq} \\
				&& +\; C_p\Big[(\|D^2g\|_\infty \|g\|_\infty + \|Dg\|_\infty^2) \ltn \X \rtn_{\tq} + \frac{1}{2}\|D^2g\|_\infty \|g\|_\infty \ltn x \rtn_{\tp}^2\Big]\ltn \phi\rtn_{\tp}\notag; 
			\end{eqnarray}
			and similarly, since $R^\phi_{s,t} = \phi_{s,t} - \phi'_s x_{s,t}=\int_s^tg(\phi_u)dx_u - g(\phi_s)x_{s,t}$ we have
			\begin{eqnarray}\label{Rphiqvar} 
				\ltn R^\phi \rtn_{\tq} 	&\leq& \|Dg\|_\infty \|g\|_\infty  \ltn \X \rtn_{\tq} + C_p \|Dg\|_\infty\ltn x \rtn_{\tp} \ltn R^\phi \rtn_{\tq}\\
				&&+\; C_p\Big[(\|D^2g\|_\infty \|g\|_\infty + \|Dg\|_\infty^2) \ltn \X \rtn_{\tq} + \frac{1}{2}\|D^2g\|_\infty \|g\|_\infty \ltn x \rtn_{\tp}^2\Big]\ltn \phi\rtn_{\tp}.\notag
			\end{eqnarray}
			Then it follows from \eqref{phipvar} and \eqref{Rphiqvar} that
            \begin{eqnarray}\label{phi+R}
			&&	\ltn \phi \rtn_{\tp}\vee\ltn R^\phi \rtn_{\tq}\notag \\
            &\leq& \|g\|_{\infty} \ltn x\rtn_{\tp} +\|Dg\|_\infty \|g\|_\infty  \ltn \X\rtn_{\tq} \notag\\
                &&+  C_p\left[\|Dg\|_\infty\ltn x \rtn_{\tp} +(\|D^2g\|_\infty \|g\|_\infty + \|Dg\|_\infty^2) \ltn \X \rtn_{\tq} + \frac{1}{2}\|D^2g\|_\infty \|g\|_\infty \ltn x \rtn_{\tp}^2\right]\times\notag\\
                &&\times \left(\ltn \phi \rtn_{\tp}\vee\ltn R^\phi \rtn_{\tq}\right)\notag\\
                &\leq& \|g\|_{\infty} \ltn x\rtn_{\tp} +\|Dg\|_\infty \|g\|_\infty \ltn \X\rtn_{\tq} +  (\lambda+\dfrac{5}{2}\lambda^2) (\ltn \phi \rtn_{\tp}\vee\ltn R^\phi \rtn_{\tq})
            \end{eqnarray}
 Taking into account the fact that $\lambda = C_p C_g \ltn \bx \rtn_{\tp}\leq 1/8$, we have 
 \begin{eqnarray}
 \ltn \phi \rtn_{\tp}\vee \ltn R^\phi \rtn_{\tq}&\leq& \frac{1}{1-\lambda-\dfrac{5}{2}\lambda^2}(\|g\|_\infty \ltn x \rtn_{\tp}+\|Dg\|_\infty \|g\|_\infty \ltn \X\rtn_{\tq} )\notag\\
 &\leq& 2(\|g\|_\infty \ltn x \rtn_{\tp}+ C^2_g\ltn \X\rtn_{\tq} )\notag\\
 &\leq& 2(\|g\|_\infty \ltn x \rtn_{\tp}+ \lambda^2).
 \end{eqnarray}
Replacing the above estimate into \eqref{Rphiqvar} yields
    \begin{eqnarray*}
\ltn R^\phi \rtn_{\tq} 
    &\leq& 
     \lambda^2+ \lambda\ltn R^\phi \rtn_{\tq}+ C_p\Big[(\|D^2g\|_\infty \|g\|_\infty + \|Dg\|_\infty^2) \ltn \X \rtn_{\tq} \\
     && \hspace{4cm}+ \frac{1}{2}\|D^2g\|_\infty \|g\|_\infty \ltn x \rtn_{\tp}^2\Big] 2(\|g\|_\infty \ltn x \rtn_{\tp}+\lambda^2)  \\
     &\leq& \lambda^2+ \lambda\ltn R^\phi \rtn_{\tq}+ (\frac{5}{2}\lambda^2) 2 (\lambda+ \lambda^2)
    \end{eqnarray*}
which reduces to
    \[
      \ltn R^\phi \rtn_{\tq} \leq \frac{1}{1-\lambda} \Big[\lambda^2+5\lambda^2 (\lambda + \lambda^2)\Big]\notag \leq 2\lambda^2.
	\]
Hence we obtain \eqref{solest1a}.\\

 In case $g(0) = 0$, one rewrites 
			\[
			\|g(\phi)\|_{\infty,[s,t]} \leq \|g(0)\| + \|Dg\|_\infty \|\phi\|_{\infty,[s,t]} \leq \|Dg\|_\infty (\|\phi_s\| + \ltn \phi \rtn_{\tp,[s,t]}).
			\] 
Then \eqref{phi+R} can be rewritten as
        \begin{eqnarray*}
				\ltn \phi \rtn_{\tp}\vee\ltn R^\phi \rtn_{\tq} &\leq&(C_g \ltn x\rtn_{\tp} +C^2_g \ltn \X\rtn_{\tq})\|\phi_a\| +  (2\lambda+\dfrac{7}{2}\lambda^2) (\ltn \phi \rtn_{\tq}\vee\ltn R^\phi \rtn_{\tq}).
            \end{eqnarray*}
This deduces to  
    \[
  \ltn \phi \rtn_{\tp} \leq  \ltn \phi \rtn_{\tp} \vee\ltn R^\phi \rtn_{\tq} \leq \frac{\lambda + \lambda^2}{1-2\lambda - \frac{7}{2}\lambda^2}\|\phi_a\| \leq 2\lambda \|\phi_a\|
    \]
and to
    \[
      \ltn R^\phi \rtn_{\tq} \leq \frac{1}{1-\lambda} \Big[\lambda^2\|\phi_a\| +\frac{7}{2}\lambda^2 \frac{\lambda+\lambda^2}{1-\lambda -\frac{7}{2}\lambda^2} \|\phi_a\|) \Big]\notag \leq 2\lambda^2\|\phi_a\|. 
	\]
Thus \eqref{solest1} is proved.\\
		
	$(ii)$, Next, observe that $[g(\bar{\phi})-g(\phi)]^\prime = Dg(\bar{\phi})g(\bar{\phi}) - Dg(\phi)g(\phi)$. Applying estimates in $(i)$ for $\phi, \bar{\phi}, R^\phi, R^{\bar{\phi}} $ we have
       \allowdisplaybreaks
	\begin{eqnarray*}
				&&\|[g(\bar{\phi})-g(\phi)]^\prime \|_\infty \leq(\|D^2g\|_\infty \|g\|_\infty + \|Dg\|^2_\infty) \|\bar{\phi}-\phi\|_\infty  ;\\
				&&\ltn [g(\bar{\phi})-g(\phi)]^\prime\rtn_{\tp}\\
                &&\hspace{1cm}\leq (\|D^2g\|_\infty \|g\|_\infty + \|Dg\|^2_\infty) \ltn\bar{\phi}-\phi \rtn_{\tp} \\
				&&\hspace{1cm}+\; (\|D^3g\|_\infty \|g\|_\infty + 3\|D^2g\|_\infty \|Dg\|_\infty)\frac{1}{2}( \ltn \bar\phi\rtn_{\tp}+ \ltn \phi\rtn_{\tp}) \|\bar{\phi}-\phi\|_\infty \\
				&&\hspace{1cm}\leq  2C^2_g  \ltn\bar{\phi}-\phi \rtn_{\tp}\\
                &&\hspace{1cm}+  2 (\|D^3g\|_\infty \|g\|_\infty + 3\|D^2g\|_\infty \|Dg\|_\infty) (\|g\|_\infty \ltn x\rtn_{\tp}+ \|Dg\|_\infty\|g\|_\infty \ltn \X\rtn_{\tq})\|\bar{\phi}-\phi\|_\infty \\
                &&\hspace{1cm}\leq  2C^2_g  \ltn\bar{\phi}-\phi \rtn_{\tp}+  8 (C^3_g  \ltn x\rtn_{\tp}+C^4_g \ltn \X\rtn_{\tq})\|\bar{\phi}-\phi\|_\infty \\
                &&\hspace{1cm}\leq  2C^2_g  \ltn\bar{\phi}-\phi \rtn_{\tp}+  8[ C^3_g  \ltn \bx\rtn_{\tp} + C^4_g  \ltn \bx\rtn^2_{\tp}]\|\bar{\phi}-\phi\|_\infty; \\
   			&&\ltn R^{g(\bar{\phi})-g(\phi)}\rtn_{\tq} \\
                &&\hspace{1cm}\leq \|Dg\|_\infty \ltn R^{\bar{\phi}-\phi}\rtn_{\tq} + \|D^2g\|_\infty \|g\|_\infty \ltn x\rtn_{\tp}  \ltn\bar{\phi}-\phi \rtn_{\tp} \\
				&&\hspace{1cm}+\; \Big[\|D^2g\|_\infty \ltn R^\phi\rtn_{\tq} + \|D^2g\|_\infty \|Dg\|_\infty \ltn x\rtn_{\tp} \ltn \bar{\phi} \rtn_{\tp} \\
				&&\hspace{1cm}  +\; \|D^3g\|_\infty \|g\|_\infty\ltn x\rtn_{\tp} (\ltn \bar\phi \rtn_{\tp} + \ltn \phi \rtn_{\tp}) \Big] \|\bar{\phi}-\phi\|_\infty\\
	             &&\hspace{1cm}\leq C_g \ltn R^{\bar{\phi}-\phi}\rtn_{\tq} +C^2_g \ltn x\rtn_{\tp} \ltn\bar{\phi}-\phi \rtn_{\tp}\\
                &&\hspace{1cm}+\Big( C_g +3 C^2_g\ltn x\rtn_{\tp} \Big) 2\Big[\|g\|_\infty \ltn x\rtn_{\tp}+C^2_g\ltn \X\rtn_{\tp}\Big]   \|\bar{\phi}-\phi\|_\infty  \notag\\
                 &&\hspace{1cm} \leq C_g \ltn R^{\bar{\phi}-\phi}\rtn_{\tq} +C^2_g \ltn \bx\rtn_{\tp} \ltn\bar{\phi}-\phi \rtn_{\tp}\notag\\
                 &&\hspace{1cm}+2[ C^2_g \ltn \bx\rtn_{\tp}  +4C^3_g \ltn \bx\rtn^2_{\tp} + 3C^4_g \ltn \bx\rtn^3_{\tp}  ] \|\bar{\phi}-\phi\|_\infty.
 		\end{eqnarray*}
			As a result, it follows from $\bar{\phi}_{s,t}-\phi_{s,t} = \int_s^t [g(\bar{\phi}_u)-g(\phi_u)]dx_u$ that
			\begin{eqnarray}\label{diffphipvar} 
				\ltn \bar{\phi}- \phi \rtn_{\tp} &\leq&  \|Dg\|_\infty \ltn x \rtn_{\tp}\|\bar{\phi} - \phi\|_\infty + (\|D^2g\|_\infty \|g\|_\infty + \|Dg\|^2_\infty) \ltn \X \rtn_{\tq} \|\bar{\phi} - \phi\|_\infty \notag\\
				&&+\; C_p \Big[\ltn x\rtn_{\tp} \ltn R^{g(\bar{\phi})-g(\phi)}\rtn_{\tq}+ \ltn \X \rtn_{\tq}   \ltn [g(\bar{\phi})-g(\phi)]^\prime\rtn_{\tp}\Big] \notag\\
                &\leq& \Big[C_g \ltn \bx\rtn_{\tp}+2C^2_g \ltn \bx\rtn^2_{\tp}\Big]\|\bar{\phi} - \phi\|_\infty\notag\\
                &&\hspace{0cm}+ C_pC_g \ltn \bx\rtn_{\tp} \Big[\ltn R^{\bar{\phi}-\phi}\rtn_{\tq}+ C_g \ltn \bx\rtn_{\tp} \ltn\bar{\phi}-\phi \rtn_{\tp} + \Big.\notag\\
                && \Big. +2\Big(C_g \ltn \bx\rtn_{\tp} +4C^2_g \ltn \bx\rtn^2_{\tp}+3C^3_g \ltn \bx\rtn^3_{\tp}      \Big)\| \bar{\phi}-\phi\|_\infty\Big] \notag\\
                 &&\hspace{0cm}+ 2C_pC^2_g \ltn \bx\rtn^2_{\tp} \Big[   \ltn\bar{\phi}-\phi \rtn_{\tp}  + \big( 4C_g \ltn \bx\rtn_{\tp} + 4C^2_g \ltn \bx\rtn^2_{\tp}  \Big)\|\bar{\phi}-\phi\|_\infty\Big] \notag\\
				&\leq& (\lambda+ 4\lambda^2+16\lambda^3+14\lambda^4)\|\bar{\phi} - \phi\|_\infty+(\lambda+ 3\lambda^2) \Big(\ltn\bar{\phi}-\phi \rtn_{\tp} \vee \ltn R^{\bar{\phi}-\phi}\rtn_{\tq}\Big) \notag\\
				&\leq& \lambda(1+4\lambda+16\lambda^2 + 14\lambda^3)  \|\bar{\phi}_a - \phi_a\| \notag\\
                &&+(2\lambda+7\lambda^2+16\lambda^3+14\lambda^4)  ( \ltn\bar{\phi}-\phi \rtn_{\tp}\vee  \ltn R^{\bar{\phi}-\phi}\rtn_{\tq} ).
			\end{eqnarray} 
It is easily seen that $\ltn R^{\bar{\phi}- \phi} \rtn_{\tq}$ furnishes the same estimate. 
Consequently,
\[
 \Big( \ltn\bar{\phi}-\phi \rtn_{\tp}\vee  \ltn R^{\bar{\phi}-\phi}\rtn_{\tq} \Big) \leq \frac{\lambda(1+4\lambda+16\lambda^2 + 14\lambda^3) }{1-(2\lambda+7\lambda^2+16\lambda^3+14\lambda^4)} \|\bar{\phi}_a - \phi_a\|\leq 4
\lambda\|\bar{\phi}_a - \phi_a\|
\]
		\begin{equation}\label{phidiff_new}
			\ltn \bar{\phi}- \phi \rtn_{\tp}  \leq 4 \lambda \|\bar{\phi}_a - \phi_a\| \quad \text{and}\quad 	\ltn R^{\bar{\phi}- \phi} \rtn_{\tq} \leq  4 \lambda \|\bar{\phi}_a - \phi_a\|.
			\end{equation}
			One then uses the similar arguments to \cite[Proposition 2.1]{duc21} to prove \eqref{solest3}.		\\    
\end{proof}
        
\begin{proof}[{\bf  Proposition \ref{solestdiff}}]
    $i$,	The proof for the first estimate is simple, since one can write $\teta_t - \eta_t$ in the form 
		\begin{equation}\label{etadiff}
		\teta_t - \eta_t = \int_0^t [g(\tphi_u(\bx,\tz_t)) - g(\phi_u(\bx,z_t))] dx_u.
		\end{equation}
The first estimate in \eqref{etadiff} is then a direct consequence of \eqref{phidiff_new}.

      $ii$, The proof simply goes line by line with \cite[Proposition 4]{duc21} but using the estimates in the proof of Proposition \ref{solest}. By a direct computation, one can choose $K(\lambda):= 256 \lambda$. 
      Due to its similarity and length, it will be omitted here. \\
\end{proof}

\begin{proof}[{\bf Proof of Proposition \ref{hnew}}]
First, by repeating the computations in \cite[Section \ 3, Proposition 5.1]{congduchong23} we obtain
    \begin{eqnarray*}
    &&\|(\delta P)_{s,u,t}\| := \|P_{s,t} - P_{s,u}-P_{u,t} \| \\
    &&\leq \left\| \int_0^1 \left[ Dg(y^\Delta_s+\eta y^\Delta_{s,u})R^{y^\Delta}_{s,u}- Dg(\ta^\Delta_s+\eta \ta^\Delta_{s,u})R^{\ta^\Delta}_{s,u}\right]  d\eta \right\|\|x_{u,t}\|\\
    &&+\left\| \int_0^1 \left[Dg(y^\Delta_s+\eta y^\Delta_{s,u})g(y^\Delta_s)-Dg(y^\Delta_s)g(y^\Delta_s)+Dg(\ta^\Delta_s+\eta \ta^\Delta_{s,u}) g(\ta^\Delta_s)-Dg(\ta^\Delta_s)g(\ta^\Delta_s)\right] d\eta \right\|\times \\
    &&\qquad \times \|x_{s,u}\otimes x_{u,t}\| \\
    &&+ \left\| Dg(y^\Delta_s)g(y^\Delta_s)-Dg(y^\Delta_u)g(y^\Delta_u)-Dg(\ta^\Delta_s)g(\ta^\Delta_s)+Dg(\ta^\Delta_u)g(\ta^\Delta_u)\right\|\|\X_{u,t}\| \\
    &&=: A \|x_{u,t}\|+B \|x_{s,u}\otimes x_{u,t}\|+C\| \X_{u,t}\|,
    \end{eqnarray*}
 in which $A, B, C$ in the right hand side can be estimated as follows by using the Mean value theorem. Namely, put
\[
L_g^* = \max\Bigg\{\|Dg\|_{\infty} \|D^2g\|_\infty,  \|g\|_\infty  \|D^3g\|_\infty)\Bigg\}^{\frac{1}{2}},
\]
we obtain the estimates
         \allowdisplaybreaks
    \begin{eqnarray*}
        A&=& \left\| \int_0^1 \left[ Dg(y^\Delta_s+\eta y^\Delta_{s,u})R^{y^\Delta-\ta^\Delta}_{s,u}- \Big(Dg(y^\Delta_s+\eta y^\Delta_{s,u})-Dg(\ta^\Delta_s+\eta \ta^\Delta_{s,u})\Big)R^{\ta^\Delta}_{s,u}\right]  d\eta \right\|\\
        &\leq&  \|Dg\|_\infty  R^{y^\Delta-\ta^\Delta}_{s,u} +{\color{black} {\|D^2g\|_\infty  \|R^{\ta^\Delta}_{s,u}\| }} \|h\|_{\infty,[s,u]}  \\
        &\leq&  \|Dg\|_\infty  \ltn R^{y^\Delta-\ta^\Delta}\rtn_{\tq,\Pi[s,u]} +\|D^2g\|_\infty  \ltn R^{\ta^\Delta}\rtn_{\tq,\Pi[s,u]}  \|h\|_{\infty,[s,u]} \\
        &\leq& \|Dg\|_\infty  \ltn  R^{h}\rtn_{\tq,\Pi[s,u]} + \|D^2g\|_\infty \|h\|_{\infty,[s,u]}\ltn R^{\ta^\Delta}\rtn_{\tq,\Pi[s,u]};
    \end{eqnarray*}
    \begin{eqnarray*}
        B&= & \left\| \int_0^1\eta \left[\int_0^1 D^2g(y^\Delta_s+\xi\eta y^\Delta_{s,u}) y^\Delta_{s,u}  g(y^\Delta_s)d\xi- \int_0^1  D^2g(\ta^\Delta_s+\xi\eta \ta^\Delta_{s,u})\ta^\Delta_{s,u} g(\ta^\Delta_s) d\xi \right] d\eta \right\|\\
        &= & \left\| \int_0^1 \eta\left[\int_0^1 D^2g(y^\Delta_s+\xi\eta y^\Delta_{s,u}) (y^\Delta_{s,u}-\ta^\Delta_{s,u})  g(y^\Delta_s)d\xi \right.\right.\\
        &&+ \left. \left.\int_0^1  D^2g(y^\Delta_s+\xi\eta y^\Delta_{s,u})\ta^\Delta_{s,u} \left(g(y^\Delta_s)-g(\ta^\Delta_s) \right)d\xi  \right.\right.\\
        &&+ \left. \left.\int_0^1  \left(D^2g(y^\Delta_s+\xi\eta y^\Delta_{s,u})-D^2g(\ta^\Delta_s+\xi\eta \ta^\Delta_{s,u})\right)\ta^\Delta_{s,u} g(\ta^\Delta_s) d\xi \right] d\eta \right\|\\
        &\leq&\dfrac{1}{2}\|g\|_\infty.\|D^2g\|_\infty\ltn h\rtn_{\tp,\Pi[s,u]}    +  \dfrac{1}{2} ( \|D^2g\|_\infty.\|Dg\|_\infty  +\|D^3g\|_\infty.\|g\|_\infty) \|h\|_{\infty,\Pi[s,u]}\ltn \ta^\Delta\rtn_{\tp,\Pi[s,u]}  \\
         &\leq&   \dfrac{1}{2} L^2_g\ltn h\rtn_{\tp,\Pi[s,u]}    +  (L^*_g)^2 \|h\|_{\infty,\Pi[s,u]}\ltn \ta^\Delta\rtn_{\tp,\Pi[s,u]}  ;
          \end{eqnarray*}
and finally, by assigning $Q(\cdot):= Dg(\cdot)g(\cdot)$,
    \begin{eqnarray*}
        C &=& \left\|\int_0^1 DQ (y^\Delta_s+\eta y^\Delta_{s,u})y^\Delta_{s,u} d\eta -\int_0^1 DQ (\ta^\Delta_s+\eta \ta^\Delta_{s,u})a^\Delta_{s,u} d\eta   \right\|\\
         &=& \left\|\int_0^1 DQ (y^\Delta_s+\eta y^\Delta_{s,u})(y^\Delta_{s,u}-\ta^\Delta_{s,u}) d\eta +\int_0^1\Big[DQ (y^\Delta_s+\eta y^\Delta_{s,u})- DQ (\ta^\Delta_s+\eta \ta^\Delta_{s,u})\Big]\ta^\Delta_{s,u} d\eta   \right\|\\
          &\leq& (\|D^2g\|_\infty\|g\|_\infty.+\|Dg\|^2_\infty) \ltn h\rtn_{\tp,\Pi[s,u]}+\|D^2Q\|_\infty \|h\|_{\infty,[\Pi[s,u]]}\ltn \ta^\Delta\rtn_{\tp,\Pi[s,u]}\\
        &\leq& (\|D^2g\|_\infty\|g\|_\infty.+\|Dg\|^2_\infty) \ltn h\rtn_{\tp,\Pi[s,u]}\\
        &&+(3\|Dg\|_\infty.\|D^2g\|_\infty+\|g\|_\infty.\|D^3g\|_\infty) \|h\|_{\infty,\Pi[s,u]} \ltn \ta^\Delta\rtn_{\tp,\Pi[s,u]}\\
         &\leq&  2L^2_g \ltn h\rtn_{\tp,\Pi[s,u]} + 4(L^*_g)^2\|h\|_{\infty,\Pi[s,u]}\ltn \ta^\Delta\rtn_{\tp,\Pi[s,u]}  .
    \end{eqnarray*}
These estimates lead to
    \begin{eqnarray}\label{omegaP}
    &&\|(\delta P)_{s,u,t} \| \\
    &\leq& L_g \ltn x\rtn_{\tp,\Pi[s,t]} \ltn  R^h \rtn_{\tq,\Pi[s,t]}  + L_g^2\Big(\dfrac{1}{2}\ltn x\rtn^2_{\tp,\Pi[s,t]}+ 2  \ltn \X\rtn_{\tq,\Pi[s,t]}\Big) \ltn h \rtn_{\tp,\Pi[s,t]  }\notag\\
    &&+\left[\|D^2_g\|_\infty \ltn x\rtn_{\tp,\Pi[s,t]}  \ltn R^{\ta^\Delta} \rtn_{\tq,\Pi[s,t]}\right.\notag\\
    &&\qquad \left.+ (L^*_g)^2\Big( \ltn x\rtn^2_{\tp,\Pi[s,t]} +4 \ltn \X\rtn_{\tq,\Pi[s,t]}  \Big) \ltn \ta^\Delta \rtn_{\tp,\Pi[s,t]} \right] \|h\|_{\infty,\Pi[s,t]}  =: W_{s,t}(\delta P). \notag
    \end{eqnarray} 
 Thus, \cite[Corollary 2.4]{congduchong23} implies
		 \begin{eqnarray}
			\ltn h-P \rtn_{\tp,\Pi[a,b]}& \leq& L_f\|h\|_{\infty,\Pi[a,b]} (b-a) +  C_p W_{a,b}(\delta P);\notag
		\end{eqnarray}	
and by \eqref{Pvsh}
	\begin{eqnarray*}\label{h_new}
		  \ltn h \rtn_{\tp,\Pi[a,b]}
          &\leq& \ltn P \rtn_{\tp,\Pi[a,b]}+ L_f\|h\|_{\infty,\Pi[a,b]} (b-a) +  C_p W_{a,b}(\delta P) \notag\\
          &\leq&  \Big[ L_f(b-a) +L_g\ltn \bx\rtn_{\tp,\Pi[a,b]}+ 2 L^2_g\ltn \bx\rtn^2_{\tp,\Pi[a,b]}\Big]\|h\|_{\infty,\Pi[a,b]}  +  C_p W_{a,b}(\delta P) \notag\\
           &\leq& \Big[ L_f(b-a) +L_g\ltn \bx\rtn_{\tp,\Pi[a,b]}+ 2 L^2_g\ltn \bx\rtn^2_{\tp,\Pi[a,b]}\Big]\|h\|_{\infty,\Pi[a,b]} \notag\\
           &&+ C_p\Big[L_g\ltn x\rtn_{\tp,\Pi[a,b]}+ L^2_g\Big(\dfrac{1}{2}\ltn x\rtn^2_{\tp,\Pi[a,b]}+ 2  \ltn \X\rtn_{\tq,\Pi[a,b]} \Big)  \Big] \ltn h,R^h\rtn_{\tp,\Pi[a,b]}    \notag\\
            &&+C_p \Big[ \|D^2_g\|_\infty\ltn x\rtn_{\tp,\Pi[a,b]}\ltn R^{\ta^\Delta}\rtn_{\tp,\Pi[a,b]}+ \Big.\notag\\
              &&\hspace{2cm} \Big. + (L^*_g)^2\Big(\ltn x\rtn^2_{\tp,\Pi[a,b]}+ 4  \ltn \X\rtn_{\tq,\Pi[a,b]} \Big)\ltn \ta^\Delta\rtn_{\tp,\Pi[a,b]} \Big]   \|h\|_{\infty,\Pi[a,b]} \notag\\ 
 	   &\leq&\left[L_f (b-a)+4C_pL_g \ltn \bx\rtn_{\tp,\Pi[a,b]}+C_p\|D^2_g\|_\infty \ltn x\rtn_{\tp,\Pi[a,b]} \ltn R^{\ta^\Delta}\rtn_{\tp,\Pi[a,b]}\right.  \notag\\
           && \hspace{0cm}\left. +C_p(L^*_g)^2( \ltn x\rtn^2_{\tp,\Pi[a,b]} +4\ltn \X\rtn_{\tp,\Pi[a,b]})\ltn \ta^\Delta\rtn_{\tp,\Pi[a,b]}\right]  \times \| h,R^h\|_{\tp,\Pi[a,b]}.
	\end{eqnarray*}
The same estimate applies for $R^h$.  Therefore,
     \begin{eqnarray*}
		  &&\ltn h,R^h \rtn_{\tp,\Pi[a,b]}\\
          &\leq&\left[L_f (b-a)+4C_pL_g \ltn \bx\rtn_{\tp,\Pi[a,b]}+C_p\|D^2_g\|_\infty \ltn x\rtn_{\tp,\Pi[a,b]} \ltn R^{\ta^\Delta}\rtn_{\tp,\Pi[a,b]}\right.  \notag\\
           && \hspace{0cm}\left. +C_p(L^*_g)^2( \ltn x\rtn^2_{\tp,\Pi[a,b]} +4\ltn \X\rtn_{\tp,\Pi[a,b]})\ltn \ta^\Delta\rtn_{\tp,\Pi[a,b]}\right]  \times \| h,R^h\|_{\tp,\Pi[a,b]}.
	\end{eqnarray*}
Now we modify the estimate in formula (3.7) in \cite[Theorem\ 3.3]{congduchong23}) to have
		\begin{eqnarray*}
			&&\ltn \ta^\Delta,R^{\ta^\Delta} \rtn_{\tp,\Pi[a,b]}\\
            &\leq& \|f(\ta^\Delta)\|_{\infty,\Pi[a,b]}(b-a) +  \|g\|_\infty \ltn x \rtn_{\tp,\Pi[a,b]}+ \|g\|_\infty \|Dg\|_\infty\ltn \X \rtn_{\tq,\Pi[a,b]} \notag\\
			&&+C_p\ltn \ta^\Delta,R^{\ta^\Delta} \rtn_{\tp,\Pi[a,b]} \left[\|Dg\|_\infty \ltn x \rtn_{\tp,\Pi[a,b]}+ \|D^2g\|_\infty\|g\|_\infty \ltn x \rtn^2_{\tp,\Pi[a,b]} \right.\\
            &&\hspace{5cm}\left. +(\|D^2g\|_\infty\|g\|_\infty + \|Dg\|^2_\infty)\ltn \X \rtn_{\tq,\Pi[a,b]}\right]\\
            &\leq& \|f(\ta^\Delta)\|_{\infty,\Pi[a,b]}(b-a)+  \|g\|_\infty \ltn x \rtn_{\tp,\Pi[a,b]}+ \|g\|_\infty \|Dg\|_\infty\ltn \X \rtn_{\tq,\Pi[a,b]} \notag\\
            &&+ 4C_p L_g \ltn \bx\rtn_{\tp,\Pi[a,b]}\ltn \ta^\Delta,R^{\ta^\Delta} \rtn_{\tp,\Pi[a,b]},
		\end{eqnarray*}
which implies
		   \begin{eqnarray*}
			&&\ltn \ta^\Delta,R^{\ta^\Delta} \rtn_{\tp,\Pi[a,b]}\notag\\
            &\leq& \dfrac{1}{1-\lambda}\left[\|f(\ta^\Delta)\|_{\infty,\Pi[a,b]}(b-a)  +  \|g\|_\infty \ltn x \rtn_{\tp,\Pi[a,b]}+  \|Dg\|_\infty \|g\|_\infty\ltn \X \rtn_{\tq,\Pi[a,b]} \right]. 
		\end{eqnarray*}
Thus,
\begin{eqnarray}\label{aDelta}
   && C_p \|D^2g\|_\infty  \ltn x\rtn_{\tp,\Pi[a,b]}\ltn R^{\ta^\Delta} \rtn_{\tp,\Pi[a,b]}\notag\\
   &&+C_p(L^*_g)^2( \ltn x\rtn^2_{\tp,\Pi[a,b]} +4\ltn \X\rtn_{\tp,\Pi[a,b]})\ltn \ta^\Delta\rtn_{\tp,\Pi[a,b]}  \notag\\
    &\leq& \dfrac{ C_p}{1-\lambda}\left[\|f(\ta^\Delta)\|_{\infty,\Pi[a,b]} (b-a) + \|g\|_\infty \ltn x \rtn_{\tp,\Pi[a,b]}+  \|Dg\|_\infty \|g\|_\infty\ltn \X \rtn_{\tq,\Pi[a,b]}\right] \times \notag\\
    && \hspace{1cm} \times\left[\|D^2g\|_\infty  \ltn x\rtn_{\tp,\Pi[a,b]} +  (L^*_g)^2( \ltn x\rtn^2_{\tp,\Pi[a,b]} +4\ltn \X\rtn_{\tp,\Pi[a,b]})\right] \notag\\
    &\leq& \dfrac{C_p }{1-\lambda}\left[\|f(\ta^\Delta)\|_{\infty,\Pi[a,b]} (b-a) + L_g \ltn \bx \rtn_{\tp,\Pi[a,b]}\right] 4L_g \ltn \bx \rtn_{\tp,\Pi[a,b]}
    \end{eqnarray}
and hence
   \begin{eqnarray*}
		  &&\ltn h,R^h \rtn_{\tp,\Pi[a,b]}\\
		   &\leq&\Big[L_f (b-a)+4C_pL_g \ltn \bx\rtn_{\tp,\Pi[a,b]}+ 2\lambda \|f(\ta^\Delta)\|_{\infty,\Pi[a,b]} (b-a)\\
           &&\hspace{7cm}+\dfrac{\lambda}{1-\lambda} L_g \ltn \bx \rtn_{\tp,\Pi[a,b]}\Big] \| h,R^h\|_{\tp,\Pi[a,b]}\\
		     &\leq&\Big[L_f (b-a)+2\lambda \|f(\ta^\Delta)\|_{\infty,\Pi[a,b]}(b-a)+(4C_p+1)L_g \ltn \bx\rtn_{\tp,\Pi[a,b]} \Big] \| h,R^h\|_{\tp,\Pi[a,b]}.
	\end{eqnarray*}
The proved is completed.\\      
\end{proof}

\begin{proof}[{\bf Proof of Proposition \ref{case1}}] Firstly, by assumption
 \[
    \Big[L_f (\tau^\Delta_{1}-\tau^\Delta_0) +2\lambda \|f(\ta^\Delta)\|_{\infty,\Pi[\tau^\Delta_0,\tau^\Delta_1]} (\tau^\Delta_1-\tau^\Delta_0)+(4C_p+1) L_g\ltn \bx \rtn_{\tp,\Pi[\tau^\Delta_0,\tau^\Delta_{1}]}\Big] \leq \lambda
\]
we apply Proposition \ref{hnew} to obtain
	   \begin{equation}\label{hcase1}
     			\ltn h,R^h \rtn_{\tp,\Pi[\tau^\Delta_0,\tau^\Delta_1]} \leq \frac{\lambda}{1-\lambda} \|h_{\tau^\Delta_0}\|, \qquad
                 \| h,R^h \|_{\tp,\Pi[\tau^\Delta_0,\tau^\Delta_1]}  \leq \frac{1}{1-\lambda}  \|h_{\tau^\Delta_0}\| \leq 2 \|h_{\tau^\Delta_0}\|.
        \end{equation}
and by \ref{aDelta}
 \begin{eqnarray}\label{a2}
	    &&C_p \|D^2g\|_\infty  \ltn x\rtn_{\tp,\Pi[a,b]}\ltn R^{\ta^\Delta} \rtn_{\tp,\Pi[a,b]}\notag\\
        &&+C_p(L^*_g)^2( \ltn x\rtn^2_{\tp,\Pi[a,b]} +4\ltn \X\rtn_{\tp,\Pi[a,b]})\ltn \ta^\Delta\rtn_{\tp,\Pi[a,b]}\notag\\
        &\leq& 8C_pL_g \ltn \bx \rtn_{\tp,\Pi[a,b]}
		\end{eqnarray}      
 Assign $G_{s,t} :=  2 \langle h_s, P_{s,t} \rangle + \| P_{s,t}\|^2$. Since $M(\cdot)$ is bounded by $3L_f$, \eqref{eqRDE:02} implies that whenever $\|h_{t^\Delta_k}\|<r,$
	\begin{eqnarray*}
			\|h_{t_{k+1}}\|^2 &\leq& \|h_{t_k}\|^2 \exp \{2\Delta M(\ta^\Delta_{t_k})+ L_f^2\Delta^2\}+ 2L_f\|h\|_{\infty,\Pi[\tau^\Delta_0,\tau^\Delta_1]}\|P\|_{\infty,\Pi[\tau^\Delta_0,\tau^\Delta_1]} \Delta + G_{t_k,t_{k+1}}.
		\end{eqnarray*}
Now by induction and using the assumption that $L_f (\tau^\Delta_1-\tau^\Delta_0)<\lambda$ we obtain for $t_n\in [\tau^\Delta_0,\tau^\Delta_1)$ the estimates
        \allowdisplaybreaks
        \begin{eqnarray}\label{eqRDE:05}
			\|h_{t_n}\|^2 
			&\leq& \exp\left\{\Delta\displaystyle\sum_{k=0}^{n-1}\left[2M(\ta^\Delta_{t_k})+ L_f^2\Delta\right]\right \} \|h_{\tau^\Delta_0}\|^2 +  \sum_{k=0}^{n-1} \exp\left\{\Delta\displaystyle\sum_{j=k+1}^{n-1}\left[2M(\ta^\Delta_{t_j})+ L_f^2\Delta\right]\right \} G_{t_k,t_{k+1}}  \notag \\
            &&+ \sum_{k=0}^{n-1} \exp\left\{\Delta\displaystyle\sum_{j=k+1}^{n-1}\left[2M(\ta^\Delta_{t_j})+ L_f^2\Delta\right]\right \} 2L_f \Delta  \|h\|_{\infty,\Pi[\tau^\Delta_0,\tau^\Delta_1]}\|P\|_{\infty,\Pi[\tau^\Delta_0,\tau^\Delta_1]}  \notag\\
            &\leq& \exp\left\{\Delta\displaystyle\sum_{k=0}^{n-1}\left[2M(\ta^\Delta_{t_k})+ L_f^2\Delta\right]\right \}  \|h_{\tau^\Delta_0}\|^2 + \sum_{k=0}^{n-1} \exp\left\{\Delta\displaystyle\sum_{j=k+1}^{n-1}\left[2M(\ta^\Delta_{t_j})+ L_f^2\Delta\right]\right \} G_{t_k,t_{k+1}}  \notag\\
            &&+ 2L_f n\Delta  \|h\|_{\infty,\Pi[\tau^\Delta_0,\tau^\Delta_1]}\|P\|_{\infty,\Pi[\tau^\Delta_0,\tau^\Delta_1]} \exp\{(6L_f\Delta+ L^2_f\Delta^2)n\} \notag\\
            &\leq&  \exp\left\{\Delta\displaystyle\sum_{k=0}^{n-1}\left[2M(\ta^\Delta_{t_k})+ L_f^2\Delta\right]\right \}  \|h_{\tau^\Delta_0}\|^2 +  \sum_{k=0}^{n-1} \exp\left\{\Delta\displaystyle\sum_{j=k+1}^{n-1}\left[2M(\ta^\Delta_{t_j})+ L_f^2\Delta\right]\right \} G_{t_k,t_{k+1}}  \notag\\
                &&+ 2\lambda \|h\|_{\infty,\Pi[\tau^\Delta_0,\tau^\Delta_1]}\|P\|_{\infty,\Pi[\tau^\Delta_0,\tau^\Delta_1]} \exp \{(6L_f\Delta+ L^2_f\Delta^2)n\}   
		\end{eqnarray}
provided that $\|h_{t^\Delta_k}\|<r, 0\leq k<n$. Write in short $\beta_j := 2M(\ta^\Delta_{t_j})\Delta+ L_f^2\Delta^2$ and define $F_{s,t} = \exp \{\sum_{j=\frac{s}{\Delta}+1}^n \beta_j\} G_{s,t}$ for every $s<t\in \Pi[\tau^\Delta_0,\tau^\Delta_1]$. Repeating the calculations in \cite[Theorem 4.2 ]{congduchong23}, for $s\leq u\leq t$ yields
\begin{equation*}
			\|(\delta G)_{s,u,t}\| \leq
			2L_f(t-s)\|P\|_{\infty,[s,t]}  \|h\|_{\infty,\Pi[s,t]}+(3+2C_p)W_{s,t}(\delta P)  \left(\|P\|_{\infty,[s,t]} + \|h\|_{\infty,\Pi[s,t]}\right) =: W_{s,t}(\delta G),
		\end{equation*}
in which $W(\delta P)$ is given in \eqref{omegaP}, and
\begin{eqnarray*}
   \| (\delta F)_{s,u,t}\| &=& \left\| \exp \{\sum_{j=\frac{s}{\Delta}+1}^n \beta_j\} G_{s,t} - \exp \{\sum_{j=\frac{s}{\Delta}+1}^n \beta_j\} G_{s,u} -\exp \{\sum_{j=\frac{u}{\Delta}+1}^n \beta_j\} G_{u,t} \right \| \\
   &=& \left\| \exp \{\sum_{j=\frac{s}{\Delta}+1}^n \beta_j\} \delta G_{s,u,t} + \exp \{\sum_{j=\frac{s}{\Delta}+1}^n \beta_j\} G_{u,t} -\exp \{\sum_{j=\frac{u}{\Delta}+1}^n \beta_j\} G_{u,t} \right \| \\
   &\leq& \exp \{\sum_{j=\frac{s}{\Delta}+1}^n \beta_j\} \| \delta G_{s,u,t}\| + \exp \{\sum_{j=\frac{u}{\Delta}+1}^n \beta_j\} \Big(\exp \{\sum_{j=\frac{s}{\Delta}+1}^{\frac{u}{\Delta}} \beta_j\} -1\Big)\|G_{u,t}\|\\
  &\leq& \exp \{\sum_{j=\frac{s}{\Delta}+1}^n \beta_j\} \| \delta G_{s,u,t}\| + \exp \{\sum_{j=\frac{u}{\Delta}+1}^n \beta_j\} \Big(\sum_{j=\frac{s}{\Delta}+1}^{\frac{u}{\Delta}} \beta_j \Big)\exp \{\sum_{j=\frac{s}{\Delta}+1}^{\frac{u}{\Delta}} \beta_j\}\|G_{u,t}\|\\
  &\leq& \exp \{\sum_{j=\frac{s}{\Delta}+1}^n \beta_j\} \Big[ W_{s,t}(\delta G) + \|G\|_{\infty,\Pi[s,t]} \sum_{j=\frac{s}{\Delta}+1}^{\frac{u}{\Delta}} \beta_j \Big]\\
 &\leq& \exp \{\sum_{j=\frac{s}{\Delta}+1}^n \beta_j\} \Big[  \| \delta G_{s,u,t}\| + \|G\|_{\infty,\Pi[s,t]} (6L_f + L_f^2 \Delta)(u-s) \Big]\\
  &\leq& e^{6\lambda+\lambda^2} \Big[ W_{s,t}(\delta G) + \|G\|_{\infty,\Pi[s,t]} (6L_f + L_f^2 \Delta)(t-s) \Big] =:e^{6\lambda+\lambda^2} W_{s,t}(\delta F),
\end{eqnarray*}
where $W_{s,t}(\delta F)$ in the last line is a control. 	
We can now apply the discrete sewing lemma \cite{davie}, \cite[Lemma\ 2.2]{congduchong23} to \eqref{eqRDE:05} to obtain
		\begin{eqnarray*}
			\|h_{t_n}\|^2&\leq&  \exp\left\{\Delta\displaystyle\sum_{k=0}^{n-1}\left[2M(\ta^\Delta_{t_k})+ L_f^2\Delta\right]\right \} \|h_{\tau^\Delta_0}\|^2 \\
            &&+ e^{6\lambda+\lambda^2} \left[2\lambda \|h\|_{\infty,\Pi[\tau^\Delta_0,\tau^\Delta_1]}\|P\|_{\infty,\Pi[\tau^\Delta_0,\tau^\Delta_1]}+\|G_{\tau^\Delta_0,\tau^\Delta_1} \|+ C_pW_{\tau^\Delta_0,\tau^\Delta_1}(\delta F)\right]\\
            &\leq&  \exp\left\{\Delta\displaystyle\sum_{k=0}^{n-1}\left[2M(\ta^\Delta_{t_k})+ L_f^2\Delta\right]\right \} \|h_{\tau^\Delta_0}\|^2 \\
            &&+ e^{6\lambda+\lambda^2} \left[2\lambda \|h\|_{\infty,\Pi[\tau^\Delta_0,\tau^\Delta_1]}\|P\|_{\infty,\Pi[\tau^\Delta_0,\tau^\Delta_1]}+\|G_{\tau^\Delta_0,\tau^\Delta_1} \|\right.\\
            &&\hspace{4cm} \left. + C_p\left(W_{\tau^\Delta_0,\tau^\Delta_1}(\delta G)+ (6\lambda+\lambda^2) \| G\|_{\infty,[\tau^\Delta_0,\tau^\Delta_1]}\right) \right]\\
            &\leq&  \exp\left\{\Delta\displaystyle\sum_{k=0}^{n-1}\left[2M(\ta^\Delta_{t_k})+ L_f^2\Delta\right]\right \} \|h_{\tau^\Delta_0}\|^2 \\
           &&+  e^{6\lambda+\lambda^2}\left[ 2\lambda\|h\|_{\infty,\Pi[\tau^\Delta_0,\tau^\Delta_1]}\|P\|_{\infty,\Pi[\tau^\Delta_0,\tau^\Delta_1]} \right.\\
           &&\hspace{2cm}\left. +\|G\|_{\infty,[\tau^\Delta_0,\tau^\Delta_1]} + C_p W_{\tau^\Delta_0,\tau^\Delta_1}(\delta G)+C_p\|G\|_{\infty,[\tau^\Delta_0,\tau^\Delta_1]}\right]\\
             &\leq&  \exp\left\{\Delta\displaystyle\sum_{k=0}^{n-1}\left[2M(\ta^\Delta_{t_k})+ L_f^2\Delta\right]\right \} \|h_{\tau^\Delta_0}\|^2 \\
           &&+  e^{6\lambda+\lambda^2}\left[ 2\lambda\|h\|_{\infty,\Pi[\tau^\Delta_0,\tau^\Delta_1]}\|P\|_{\infty,\Pi[\tau^\Delta_0,\tau^\Delta_1]} +(1+C_p)\|G\|_{\infty,[\tau^\Delta_0,\tau^\Delta_1]} \right]\\
           &&+e^{6\lambda+\lambda^2} C_p \left[2L_f(\tau^\Delta_1-\tau^\Delta_0)\|P\|_{\infty,\Pi[\tau^\Delta_0,\tau^\Delta_1]}  \|h\|_{\infty,\Pi[\tau^\Delta_0,\tau^\Delta_1]}+\right.\\
           &&\hspace{2cm}\left.+(3+2C_p)W_{\tau^\Delta_0,\tau^\Delta_1}(\delta P)  \left(\|P\|_{\infty,\Pi[\tau^\Delta_0,\tau^\Delta_1]} + \|h\|_{\infty,\Pi[\tau^\Delta_0,\tau^\Delta_1]}\right)\right]\\
             &\leq&  \exp\left\{\Delta\displaystyle\sum_{k=0}^{n-1}\left[2M(\ta^\Delta_{t_k})+ L_f^2\Delta\right]\right \} \|h_{\tau^\Delta_0}\|^2 \\
           &&+  e^{6\lambda+\lambda^2}\left[ 2\lambda\|h\|_{\infty,\Pi[\tau^\Delta_0,\tau^\Delta_1]}\|P\|_{\infty,\Pi[\tau^\Delta_0,\tau^\Delta_1]} +(1+C_p)\|G\|_{\infty,[\tau^\Delta_0,\tau^\Delta_1]} \right]\\
           &&+e^{6\lambda+\lambda^2} C_p \left[2\lambda \|P\|_{\infty,\Pi[\tau^\Delta_0,\tau^\Delta_1]}  \|h\|_{\infty,\Pi[\tau^\Delta_0,\tau^\Delta_1]}+\right.\\
           &&\hspace{2cm}\left.+(3+2C_p)W_{\tau^\Delta_0,\tau^\Delta_1}(\delta P)  \left(\|P\|_{\infty,\Pi[\tau^\Delta_0,\tau^\Delta_1]} + \|h\|_{\infty,\Pi[\tau^\Delta_0,\tau^\Delta_1]}\right)\right]\\
            &\leq&  \exp\left\{\Delta\displaystyle\sum_{k=0}^{n-1}\left[2M(\ta^\Delta_{t_k})+ L_f^2\Delta\right]\right \} \|h_{\tau^\Delta_0}\|^2 \\
           &&+  e^{6\lambda+\lambda^2}\left[ 2\lambda(1+C_p)\|h\|_{\infty,\Pi[\tau^\Delta_0,\tau^\Delta_1]}\|P\|_{\infty,\Pi[\tau^\Delta_0,\tau^\Delta_1]} +(1+C_p)\|G\|_{\infty,[\tau^\Delta_0,\tau^\Delta_1]} \right]\\
           &&+e^{6\lambda+\lambda^2} C_p(3+2C_p)W_{\tau^\Delta_0,\tau^\Delta_1}(\delta P)  \left(\|P\|_{\infty,\Pi[\tau^\Delta_0,\tau^\Delta_1]} + \|h\|_{\infty,\Pi[\tau^\Delta_0,\tau^\Delta_1]}\right).
		\end{eqnarray*}
 Because of \eqref{Pvsh}, 
		\begin{eqnarray*}
             \|P\|_{\infty,\Pi[\tau^\Delta_0,\tau^\Delta_1]} &\leq&\dfrac{5}{4}  L_g\ltn \bx\rtn_{\tp,[\tau^\Delta_0,\tau^\Delta_1]} \|h\|_{\infty,\Pi[\tau^\Delta_0,\tau^\Delta_1]}\\
             \| G\|_{\infty,[\tau^\Delta_0,\tau^\Delta_1]} &\leq& 2\|h\|_{\infty,\Pi[\tau^\Delta_0,\tau^\Delta_1]} \|P\|_{\infty,\Pi[\tau^\Delta_0,\tau^\Delta_1]} +\|P\|^2_{\infty,\Pi[\tau^\Delta_0,\tau^\Delta_1]} \\
            &\leq&  3L_g\ltn \bx\rtn_{\tp,[\tau^\Delta_0,\tau^\Delta_1]} \|h\|^2_{\infty,\Pi[\tau^\Delta_0,\tau^\Delta_1]};
            \end{eqnarray*}	
and of \eqref{omegaP} we have
        \begin{eqnarray*}   	  
			 W_{\tau^\Delta_0,\tau^\Delta_1}(\delta P) &\leq&2L^*_g\ltn \bx\rtn_{\tp,\Pi[\tau^\Delta_0,\tau^\Delta_1]} \ltn h,R^h\rtn_{\tp,\Pi[\tau^\Delta_0,\tau^\Delta_1]}\\
              &&+ 4L^*_g\ltn \bx\rtn_{\tp,\Pi[\tau^\Delta_0,\tau^\Delta_1]} \ltn \ta^\Delta,R^{\ta^\Delta}\rtn_{\tp,\Pi[\tau^\Delta_0,\tau^\Delta_1]}  \| h\|_{\infty,\Pi[\tau^\Delta_0,\tau^\Delta_1]}\\
              &\leq& 10 L_g\ltn \bx\rtn_{\tp,\tau^\Delta_0,\tau^\Delta_1} \|h\|_{\tp,\Pi[\tau^\Delta_0,\tau^\Delta_1]}.
		\end{eqnarray*}	
due to \eqref{a2}. Combining with \eqref{omegaP}, we obtain
\begin{eqnarray}\label{eqRDE:08}
			\|h_{t_n}\|^2    
			 &\leq& \exp\left\{\Delta\displaystyle\sum_{k=0}^{n-1}\left[2M(\ta^\Delta_{t_k})+ L_f^2\Delta\right]\right \} \|h_{\tau^\Delta_0}\|^2 \notag\\
             &&+ e^{6\lambda+\lambda^2}\left(1+C_p\right)\left[\dfrac{5}{2}\lambda  L_g \ltn \bx \rtn_{\tp,[\tau^\Delta_0,\tau^\Delta_1]}+3L_g \ltn \bx \rtn_{\tp,[\tau^\Delta_0,\tau^\Delta_1]}   \right] \|h\|^2_{\infty,\Pi[\tau^\Delta_0,\tau^\Delta_1]} \notag\\
             &&+e^{6\lambda+\lambda^2} C_p(3+2C_p)
             \left[ 10L_g \ltn \bx \rtn_{\tp,[\tau^\Delta_0,\tau^\Delta_1]}   (1+ \dfrac{5}{4}L_g \ltn \bx \rtn_{\tp,[\tau^\Delta_0,\tau^\Delta_1]}) \right] \|h\|_{\tp,\Pi[\tau^\Delta_0,\tau^\Delta_1]}\notag\\
    	 &\leq& \exp\left\{\Delta\displaystyle\sum_{k=0}^{n-1}\left[2M(\ta^\Delta_{t_k})+ L_f^2\Delta\right]\right \} \|h_{\tau^\Delta_0}\|^2 \notag\\
             &&+ e^{6\lambda+\lambda^2}\left(1+C_p\right)(3+3\lambda)L_g \ltn \bx \rtn_{\tp,[\tau^\Delta_0,\tau^\Delta_1]}  \|h\|^2_{\tp,\Pi[\tau^\Delta_0,\tau^\Delta_1]} \notag\\
             &&+ e^{6\lambda+\lambda^2}C_p\left(3+2C_p\right)(1+\lambda)12L_g \ltn \bx \rtn_{\tp,[\tau^\Delta_0,\tau^\Delta_1]}  \|h\|^2_{\tp,\Pi[\tau^\Delta_0,\tau^\Delta_1]} \notag\\
    	 &\leq& \exp\left\{\Delta\displaystyle\sum_{k=0}^{n-1}\left[2M(\ta^\Delta_{t_k})+ L_f^2\Delta\right]\right \} \|h_{\tau^\Delta_0}\|^2 \notag\\
             &&+ e^{6\lambda+\lambda^2}\Big[3(1+\lambda)(1+C_p)+ 12(1+\lambda)(3C^2_p+3C_p)\Big] L_g \ltn \bx \rtn_{\tp,[\tau^\Delta_0,\tau^\Delta_1]}  \|h\|^2_{\tp,\Pi[\tau^\Delta_0,\tau^\Delta_1]} \notag\\
             &\leq&\exp\left\{\Delta\displaystyle\sum_{k=0}^{n-1}\left[2M(\ta^\Delta(\theta_{t_k}\omega))+ L_f^2\Delta\right]\right \} \|h_{\tau^\Delta_0}\|^2 \notag\\
             &&+ 10(1+\lambda)e^{6\lambda+\lambda^2} \left(1+2C_p\right)^2 L_g \ltn \bx \rtn_{\tp,[\tau^\Delta_0,\tau^\Delta_1]}  \|h_{\tau^\Delta_0}\|^2.
    	\end{eqnarray}
where we employ \eqref{hcase1} to the last inequality.
Hence \eqref{eqRDE:08} yields
	\begin{equation*}
			\|h_{t_n}\|^2\leq e^{6\lambda+\lambda^2}\left[1 +10(1+\lambda)(1+2C_p)^2 L_g\ltn \bx\rtn_{\tp,[\tau^\Delta_0,\tau^\Delta_1]} \right]\|h_{\tau^\Delta_0}\|^2 \leq 256(1+C_p)^2\|h_{\tau^\Delta_0}\|^2,
       \end{equation*} 				
which, due to assumption $\|h_{\tau^\Delta_{0}} \|\leq \frac{r}{16(1+C_p)}$,  ensures $\|h_{t_n}\| \leq r$ and \eqref{eqRDE:02} for all $t_n\in [{\tau^\Delta_0},{\tau^\Delta_1}]$. Take $t_n = \tau^\Delta_1$ in \eqref{eqRDE:08}, we obtain
		\begin{eqnarray}\label{ind}
			\|h_{\tau^{\Delta}_1}\|^2
			&\leq& \exp \Big\{(2\bar{M}_0 + L_f^2\Delta)(\tau^{\Delta}_1-\tau^{\Delta}_0) +10(1+\lambda)(1+2C_p)^2e^{12\lambda+2\lambda^2}L_g\ltn \bx\rtn_{\tp,\Pi[\tau^\Delta_0,\tau^\Delta_1]}\Big\}\|h_{\tau^{\Delta}_0}\|^2.\notag
     		  \end{eqnarray}  
which completes the proof.\\
 \end{proof}
  \medskip
\begin{proof}[{\bf Proof of Proposition \ref{attractor}}]
We use the coupling technique here. First, assume 
\[
0<	\Delta < \frac{1}{4} \wedge \frac{d}{2L_f^2} \wedge \frac{1}{2d}=:\Delta^*;
\]
and fix $T>0$ with an $n>0$ such that $ \dfrac{1}{2}<T:= n \Delta \leq 1 < (n+1) \Delta$. Then on $[0,T]$, we define 
\begin{equation}
\mu_{0} = y^\Delta_{0},\quad\mu_{t_{k+1}} = \mu_{t_k} + f(\mu_{t_k})(t_{k+1}-t_k),\quad t_k\in \Pi[0,T].
\end{equation}
Then
    \begin{eqnarray*} 
         \mu_{t_{k+1}}^2
         &\leq& \mu^2_{t_k} +[D_1-D_2\mu^2_{t_k}]\Delta + 2[L_f^2\mu^2_{t_k} + \|f(0)\|^2]\Delta^2\leq  e^{- \frac{D_2 \Delta}{2} } \mu^2_{t_k} + C(f)
   \end{eqnarray*}
    with $C(f)$ is a generic constant depending on $f$.
Therefore, 
    \begin{eqnarray}\label{mu}
      \|\mu_{T}\| 
      \leq  e^{- \frac{D_2 T}{2} }\|\mu_{0}\| + C(f), \quad 
      \|\mu\|_{\infty,[0,T]}
      \leq \|\mu_{0}\| + C(f).
    \end{eqnarray}
Moreover,
    \begin{equation}\label{mu1}
    \ltn \mu\rtn_{1{\rm-var},\Pi[0,T]} \leq  C(f)(\|\mu_{0}\| + 1).
    \end{equation}
Assign
 \[
         h_{t_k} := y^\Delta_{t_k}-\mu_{t_k},\quad t_k\in \Pi[0,T]
   \]
then
\begin{eqnarray*}
h_{t_{k+1}} &= &
h_{t_k}+[g(y_{t_k}) x_{t_k,t_{k+1}} + \partial g(y^\Delta_{t_k}) g(y^\Delta_{t_k})\X(t_k,t_{k+1})]+ [f(y^\Delta_{t_k}) -f(\mu_{t_k})  ]  (t_{k+1}-t_k) \\
&=:& h_{t_k}+ F_{t_k,t_{k+1}} +[f(y^\Delta_{t_k}) -f(\mu_{t_k})  ]  (t_{k+1}-t_k)
\end{eqnarray*}
in which $F_{s,t}: = g(y^\Delta_s)x_{s,t} + Dg(y^\Delta_s)g(y^\Delta_s)\X_{s,t}$.\\
We define an auxiliary quantity $R^h_{s,t}:= h_{s,t} - g(y_s)x_{s,t} = R^{y^\Delta}_{s,t} - \mu_{s,t}$. Then
\[
\|R^{y^\Delta}_{s,t} \| \leq \|R^h_{s,t} \| +\|\mu_{s,t}\|.
\]
Observe that $\|(\delta F)_{s,u,t}\| : = \|F_{s,t} - F_{s,u}-F_{u,t}\|$ satisfies
\[
\|(\delta F)_{s,u,t}\| \leq D(g) \left(\ltn x\rtn_{\tp,\Pi[s,t]}+\ltn \X\rtn_{\tq,\Pi[s,t]}\right),\; s\leq u\leq t
\]
in which $D(g)$ is a generic constant depending on $\|g\|_{C^3_b}$.
We modify formula (3.2) in \cite{congduchong23} to obtain for $s\leq u\leq t;\; s,u,t\in\Pi,$ (see more in \cite[Corollary 2.10]{duchong21})
\begin{eqnarray*}
    \|(\delta F)_{s,u,t}\| &\leq&  D(g) \left[ \left(\ltn R^{h}\rtn_{\tq,\Pi[s,t]} + \ltn\mu\rtn^{2/p}_{1{\rm-var},\Pi[s,t]}\right) \ltn x \rtn_{\tp,\Pi[s,t]} \right. \\
    && \hspace{3cm}\left.+( \ltn h \rtn_{\tp,\Pi[s,t]}+ \ltn \mu\rtn^{1/p}_{1{\rm-var},\Pi[s,t]}) \ltn \X \rtn_{\tq,\Pi[s,t]} \right]
    =:W(\delta F)_{s,t}
\end{eqnarray*}
which is a control.  Applying \cite[Corollary\ 2.4]{congduchong23} we have
\[
\| h_{s,t}-g(y_s)x_{s,t} - Dg(y_s)g(y_s)\X_{s,t}\| \leq L_f\|h\|_{\infty,\Pi[s,t]}(t-s) + C_p W(\delta F)_{s,t}.
\]
Then similar to the proof in Proposition \ref{hnew},
      \begin{eqnarray}\label{h}
 	     \ltn h,R^h\rtn_{\tp  ,\Pi[a,b]}
    &\leq&
         L_f\|h_a\|(b-a)+ D(g)  (\ltn x\rtn_{\tp,\Pi[a,b]}  +\ltn \X\rtn_{\tq,\Pi[a,b]}  ) \notag\\
         &&\hspace{-1cm}+\left[ L_f(b-a)+C_p D(g) \ltn x\rtn_{\tp,\Pi[a,b]} +C_pD(g) \ltn \X \rtn_{\tq,\Pi[a,b]}) \right] \ltn h, R^h\rtn_{\tp,\Pi[a,b]}\notag\\
         &&\hspace{-1cm}+C_pD(g)\left[ \ltn \mu\rtn^{2/p}_{1{\rm-var},\Pi[a,b]} \ltn x\rtn_{\tp,\Pi[a,b]}+ \ltn \mu\rtn^{1/p}_{1{\rm-var},\Pi[a,b]} \ltn \X\rtn_{\tq,\Pi[a,b]}  \right] .
     \end{eqnarray}
Now we repeat arguments in \cite[Theorem\ 3.3]{congduchong23}, to construct the same sequence of stopping times $\{\hat{\tau}_i\}$ 
on $[0,T]$ as presented at the end of Subsection 3.4, base on  $\gamma < \frac{1}{2}$, and set of controls $\mathcal{S}=\{\bw^{(1)},\bw^{(2)},\bw^{(3)}\}$ where $\bw^{(1)}_{s,t}=L_f(t-s), \beta_1 = 1$, $\bw^{(2)}_{s,t} = C_p^pD(g)^p\ltn x \rtn^p_{\tp,\Pi[s,t]}, \beta_2 = \frac{1}{p}$,  $\bw^{(3)}_{s,t} =  C_p^pD(g)^p\ltn \X \rtn^q_{\tq,\Pi[s,t]}, \beta_3 = \frac{1}{q}$. By \eqref{N_hat}, the number of $\{\hat{\tau}_i\}$ can be estimated as
\[ 
\hat{N}\leq  2 + \frac{2}{\gamma^p} 4^{p-1}\Big[L_f^pT^p +  C^p_pD(g)^p\ltn x \rtn^p_{\tp,\Pi[a,b]} + C^p_pD(g)^p\ltn \X \rtn^q_{\tq,\Pi[a,b]}\Big].
\] 
Hence 
 \begin{eqnarray*}
 \| h\|_{\infty  ,\Pi[0,T]}
	&\leq&
\|h_0\|e^{4 L_fT}+ D(g)  (\ltn x\rtn_{\tp,\Pi[a,b]}  +\ltn \X\rtn_{\tq,\Pi[a,b]}  ) \hat{N}\\
&&+ C_pD(g)\left[ \ltn \mu\rtn^{2/p}_{1{\rm-var},\Pi[a,b]} \ltn x\rtn_{\tq,\Pi[a,b]}+ \ltn \mu\rtn^{1/p}_{1{\rm-var},\Pi[a,b]} \ltn \X\rtn_{\tq,\Pi[a,b]}  \right] \\
&\leq&
 D(g)  (\ltn x\rtn_{\tp,\Pi[a,b]}  +\ltn \X\rtn_{\tq,\Pi[a,b]}  ) \hat{N}\\
&&+ C_pD(g)\left[ \ltn \mu\rtn^{2/p}_{1{\rm-var},\Pi[a,b]} \ltn x\rtn_{\tp,\Pi[a,b]}+ \ltn \mu\rtn^{1/p}_{1{\rm-var},\Pi[a,b]} \ltn \X\rtn_{\tq,\Pi[a,b]}  \right] 
\end{eqnarray*}
in which we use the sub-additivity of control
\[
C_pD(g)\left[ \ltn \mu\rtn^{2/p}_{1{\rm-var},\Pi[a,b]} \ltn x\rtn_{\tp,\Pi[a,b]}+ \ltn \mu\rtn^{1/p}_{1{\rm-var},\Pi[a,b]} \ltn \X\rtn_{\tq,\Pi[a,b]}  \right].
\] 
Using \eqref{mu1} and Young inequality, we obtain
 \begin{equation}\label{h_inf}
\| h\|_{\infty  ,\Pi[0,T]}\leq (e^{-\frac{D_2}{8}}-e^{\frac{D_2}{4}} )  \|\mu_0\| +  \Gamma(1+ \ltn \bx \rtn^{p+2}_{\tp,[0,T]})
\end{equation}
for some generic constant $\Gamma$ depends on $L_f, \|f(0)\|, D_1,D_2, \|g\|_ {C^3_b}$; moreover $\Gamma$ is an increasing function with respect to $\|g\|_ {C^3_b}$.  Next, we apply the convex inequality 
to conclude that
\begin{eqnarray*}
\|y^\Delta_T\| &\leq & \| h_T\| +\|\mu_T\|\\
	&\leq &  (e^{-\frac{D_2}{8}}-e^{-\frac{D_2}{4}} )  \|y^\Delta_0\| + \Gamma(1+ \ltn \bx \rtn^{p+2}_{\tp,[0,T]})  +e^{-\frac{D_2T}{2}}\|\mu_{0}\| +  C(f)\\
	&\leq &  e^{-\frac{D_2}{8}} \|y_0\| + \Gamma(1+ \ltn \bx \rtn^{p+2}_{\tp,[0,T]})\\
\|y_T\|^p &\leq & e^{-\frac{D_2}{8}} \|y_0\|^p + \Gamma(1+ \ltn \bx \rtn^{(p+2)p}_{\tp,[0,T]}).
\end{eqnarray*}
Similar estimate holds for norm of $y^\Delta$ on arbitrary $[kT, (k+1)T]$. The remaining part follows Theorem 4.2 and  Proposition 5.2 in \cite{congduchong23} step by step. 

\end{proof}

		\bibliographystyle{siamplain}

\begin{thebibliography}{1}
            \bibitem{arnold}
           L. Arnold.
			\newblock{\em Random dynamical systems.}
			\newblock{Springer, Berlin Heidelberg New York}, (1998).
            %
			\bibitem{BRSch17}
			\newblock I. Bailleul, S. Riedel and M. Scheutzow,
			\newblock {Random dynamical systems, rough paths and rough flows,}
			\newblock \emph{J. Differential Equations}, {\bf 262}, (2017), 5792--5823.
			%
               \bibitem{bailleul}
            I. Bailleul, R. Catellier. 
            \newblock{Non-explosion criteria for rough differential equations driven by unbounded vector fields.}
           \newblock\emph{Annales de la Faculté des Sciences de Toulouse: Mathématiques}, Série 6, Vol. {\bf 29}, No. 3, (2020), 721-759.
			%
			\bibitem{cassetal}
			T. Cass, C. Litterer, T. Lyons.
			\newblock{Integrability and tail estimates for Gaussian rough differential equations.}
			\newblock{\em Annals of Probability}, Vol. {\bf 14}, No. 4, (2013), 3026--3050. 
			%
			\bibitem{chicone}
			C. Chicone.
			\newblock{\em Ordinary differential equations with applications.}
			\newblock{Texts in applied mathematics}, Vol. {\bf 34}, Springer, New York, (2006).
			%
			\bibitem{congduchong17}
			N. D. Cong, L. H. Duc, P. T. Hong.
			\newblock{Young differential equations revisited.}
			\newblock{\em J. Dyn. Diff. Equat.}, Vol. {\bf 30}, Iss.  4, (2018), 1921--1943. 
			%
			\bibitem{congduchong23}
			N. D. Cong, L. H. Duc, P. T. Hong.
			\newblock{Numerical attractors via discrete rough paths.}
			\newblock{\em J. Dyn. Diff. Equat.}, Vol. {\bf  37}, (2025), 727–748.
  		%
			\bibitem{davie}
			A. M. Davie.
			\newblock{Differential equations driven by rough signals: an approach via discrete approximation.}
			\newblock{\em Appl. Math. Res. Express.} AMRX {\bf 2}, (2007), Art. ID abm009, 40.
			%
            \bibitem{ducjost25}
            L. H. Duc, J. Jost.
            \newblock{Strong Lyapunov functions for rough systems.}
            \newblock{Preprint. https://arxiv.org/pdf/2508.14559v2}, (2025), 30 pages.
            %
			\bibitem{duckloeden}
			L. H. Duc, P. Kloeden.
			\newblock{Numerical attractors for rough differential equations. }
			\newblock{\em SIAM Journal on Numerical Analysis},  {\bf 61} (5), (2023), 2381--2407.
			%
			\bibitem{duc21}
			L. H. Duc.
			\newblock{Random attractors for dissipative systems with rough noises.}
			\newblock{\em Disc. Cont. Dyn. Syst.}, {\bf 42} (4), (2022), 1873--1902.
			%
			\bibitem{ducSD22}
			L. H. Duc.
			\newblock{Exponential stability of stochastic systems: a pathwise approach.}
			\newblock{\em Stoch. Dyn.}, Vol {\bf 22} (3), (2022), 2240012.
			%
			\bibitem{duc20}
			L. H. Duc.
			\newblock{Controlled differential equations as rough integrals.}
			\newblock{\em Pure and Applied Functional Analysis}, Vol. {\bf 7}(4), (2022), 1245--1271. 
			%
			\bibitem{duchongcong18} 
			\newblock L. H. Duc, P. T. Hong and N. D. Cong,
			\newblock {Asymptotic stability for stochastic dissipative systems with a H\"older noise},
			\newblock \emph{SIAM Journal on Control and Optimization}, {\bf 57}, (2019), 3046--3071.
			%
			\bibitem{ducGANSch18}
			L. H. Duc, M. J. Garrido-Atienza, A. Neuenkirch, B. Schmalfu\ss.
			\newblock{Exponential stability of stochastic evolution equations driven by small fractional Brownian motion with Hurst parameter in $(\frac{1}{2},1)$}.
			\newblock{\em J. Differential Equations}, {\bf 264}, (2018), 1119--1145.
			%
		      \bibitem{duchong21}
			L. H. Duc, P. T. Hong.
			\newblock{Asymptotic Dynamics of Young Differential Equations.}
			\newblock{\em J. Dyn. Diff. Equat.}, {\bf 35}, (2023), 1667--1692.
			%
			\bibitem{fefferman}
			C. Fefferman.
			\newblock $C^m$ extension by linear operators.
			\newblock {\em Annals of Mathematics}, {\bf 166} , (2007), 779--835.
			%
			\bibitem{frizhairer}
			P. Friz, M. Hairer.
			\newblock{\em A course on rough path with an introduction to regularity structure.}
			\newblock{Universitext}, Vol. {\bf XIV}, Springer, Berlin, (2014).
			%
			\bibitem{friz}
			P. Friz, N. Victoir.
			\newblock {\em Multidimensional stochastic processes as rough paths: theory and applications.}
			\newblock {Cambridge Studies in Advanced Mathematics, 120. Cambridge University Press, Cambridge}, (2010).
			%
              \bibitem{frizdis}
           P. Friz, N. Victoir.
           \newblock{Euler estimates for rough differential equations.}
           \newblock\emph{J. Differential Equations}, Vol. {\bf 244}, (2008), 388–412.
           %
			\bibitem{GAKLBSch2010}
			M. Garrido-Atienza, B. Maslowski, B. Schmalfu\ss.
			\newblock{Random attractors for stochastic equations driven by a fractional Brownian motion.}
			\newblock{\em International Journal of Bifurcation and Chaos}, Vol. {\bf 20}, No. {\bf 9}, (2010), 2761–2782.
			%
			\bibitem{GASch}
			M. Garrido-Atienza, B. Schmalfu\ss.
			\newblock{Ergodicity of the infinite dimensional fractional Brownian motion.}
			\newblock{\em J. Dyn. Diff. Equat.}, {\bf 23}, (2011), 671--681. 
			%
			\bibitem{GABSch18}
			M. Garrido-Atienza, B. Schmalfu\ss.
			\newblock{Local stability of differential equations driven by H\"older-continuous paths with H\"older index in $(\frac{1}{3},\frac{1}{2})$.}
			\newblock{\em SIAM Journal on Applied Dynamical Systems} Vol. {\bf 17}, No. 3, (2018), 2352--2380.
			%
			\bibitem{gubinelli}
			M. Gubinelli.
			\newblock Controlling rough paths.
			\newblock {\em J. Functional Analysis}, {\bf 216} (1), (2004), 86--140.
			%
			\bibitem{khasminskii}
			R. Khasminskii.
			\newblock{\em Stochastic stability of differential equations.}
			\newblock{Springer, Vol. {\bf 66}}, (2011).
			%
                  \bibitem{lejay}
         A. Lejay.
         \newblock{Controlled differential equations as Young integrals: A simple approach.}
        \newblock\emph{ J. Differential Equations}, {\bf 249}, (2010), 1777–1798.
        %
        \bibitem{liu}
        Y. Liu and S. Tindel.
        \newblock{First-order Euler scheme for SDEs driven by fractional Brownian motions: The rough case.}
        \newblock{\em The Annals of Applied Probability}, Vol. {\bf 29}, No. 2, (2019), 758–826
        %
        \bibitem{lyons94}
        T. Lyons.
        \newblock{Differential equations driven by rough signals, I, An extension of an inequality of L.C. Young.}
        \newblock{\em Math. Res. Lett.} {\bf 1}, (1994), 451--464.
        %
			\bibitem{lyons98}
			T. Lyons.
			\newblock{Differential equations driven by rough signals.}
			\newblock{\em Rev. Mat. Iberoam.}, Vol. {\bf 14} (2), (1998), 215--310.
			%
			\bibitem{lyonsetal07}
			T. Lyons, M. Caruana, Th. L\'evy.
			\newblock{\em Differential equations driven by rough paths.}
			\newblock{Lecture Notes in Mathematics}, Vol. {\bf 1908}, Springer, Berlin, (2007).
			%
			\bibitem{perko}
			L. Perko.
			\newblock{\em Differential equations and dynamical systems.}
			\newblock{Texts in applied mathematics}, Vol. {\bf 7}, Springer, New York, (1996).
			%
			\bibitem{riedelScheutzow}
			S. Riedel, M. Scheutzow.
			\newblock{Rough differential equations with unbounded drift terms.}
			\newblock{\em J. Differential Equations}, Vol. {\bf 262}, (2017), 283--312.
			%
			\bibitem{riedel25}
			M. G. Varzaneh, S. Riedel.
			\newblock{An integrable bound for rough stochastic partial differential equations with applications to invariant manifolds and stability.}
			\newblock{\em J. Functional Analysis}, Vol. {\bf 288}, (2025), 110676.
        %
        \bibitem{ducjostdatmarius}
			M. Yamakou, T. D. Tran, L. H. Duc, J. Jost. 
			\newblock{Stochastic FitzHugh-Nagumo neuron model in excitable regime embeds a leaky integrate-and-fire model.}
			\newblock{\em J. Mathematical Biology}, {\bf 79}, (2019), 509--532.
 	\end{thebibliography}
		
	\end{document}